\newcommand\Tstrut{\rule{0pt}{4ex}}         
\newcommand\Bstrut{\rule[-3ex]{0pt}{0pt}}   
\newcommand{\margincolor}{red}      
\definecolor{darkgreen}{rgb}{0,0.7,0}
\newcounter{margincounter}
\newcommand{\marginnum}{
\ifnum\value{margincounter}<10
\textcolor{\margincolor}{\begin{picture}(0,0)\put(2.2,2.4){\circle{9}}\end{picture}\footnotesize\arabic{margincounter}}
\else\ifnum\value{margincounter}<100
\textcolor{\margincolor}{\begin{picture}(0,0)\put(4.256,2.5){\circle{11}}\end{picture}\footnotesize\arabic{margincounter}}
\else
\textcolor{\margincolor}{\begin{picture}(0,0)\put(6.8,2.5){\circle{14}}\end{picture}\footnotesize\arabic{margincounter}}
\fi\fi
}
\newcommand{\newword}[1]{\textbf{\emph{#1}}}
\newcommand{\bigjoin}{\bigvee}
\newcommand{\bigmeet}{\bigwedge}
\newcommand{\NN}{\mathbb{N}}
\newcommand{\RR}{\mathbb{R}}
\newcommand{\ZZ}{\mathbb{Z}}
\newcommand{\cC}{\mathcal{C}}
\newcommand{\cX}{\mathcal{X}}
\newcommand{\tA}{\widetilde{A}}
\newcommand{\tB}{\widetilde{B}}
\newcommand{\tC}{\widetilde{C}}
\newcommand{\tD}{\widetilde{D}}
\newcommand{\tE}{\widetilde{E}}
\newcommand{\tF}{\widetilde{F}}
\newcommand{\tG}{\widetilde{G}}
\newcommand{\uPhi}{\widehat{\Phi}}
\newcommand{\uPi}{\widehat{\Pi}}
\newcommand{\uV}{\widehat{V}}
\newcommand{\ugamma}{\widehat{\gamma}}
\newcommand{\ubeta}{\widehat{\beta}}
\newcommand{\ualpha}{\widehat{\alpha}}
\newcommand{\utheta}{\widehat{\theta}}
\newcommand{\Span}{\text{Span}}
\newtheorem{Theorem}{Theorem}[section]
\newtheorem{theorem}[Theorem]{Theorem}
\newtheorem*{Atheorem}{Theorem A}
\newtheorem*{Btheorem}{Theorem B}
\newtheorem*{Ctheorem}{Theorem C}
\newtheorem*{Dtheorem}{Theorem D}
\newtheorem{lemma}[Theorem]{Lemma}
\newtheorem{prop}[Theorem]{Proposition}
\newtheorem{conjecture}[Theorem]{Conjecture}
\theoremstyle{definition}
\newtheorem{remark}[Theorem]{Remark}
\newcommand{\Spanp}{\Span_+}
\newcommand{\Vd}{V^*}
\newcommand{\pos}{\Phi^+}
\subjclass[2020]{Primary 20F55; Secondary 17B22, 06B23}
\title{Affine extended weak order is a lattice}
\author{Grant T. Barkley}
\address{Department of Mathematics, Harvard University, Cambridge, MA, USA}
\email{gbarkley@math.harvard.edu}
\author{David E Speyer}
\address{Department of Mathematics, University of Michigan, Ann Arbor, MI, USA}
\email{speyer@umich.edu}
\begin{document}

\begin{abstract}
	Coxeter groups are equipped with a partial order known as the weak order, such that $u \leq v$ if the inversions of $u$ are a subset of the inversions of $v$.
	In finite Coxeter groups, weak order is a complete lattice, but in infinite Coxeter groups it is only a meet semi-lattice.
	Motivated by questions in Kazhdan--Lusztig theory, Matthew Dyer introduced a larger poset, now known as extended weak order, which contains the weak order as an order ideal and coincides with it for finite Coxeter groups. The extended weak order is the containment order on certain sets of positive roots: those which satisfy a geometric condition making them ``biclosed''. The finite biclosed sets are precisely the inversion sets of Coxeter group elements.
	Generalizing the result for finite Coxeter groups, Dyer conjectured that the extended weak order is always a complete lattice, even for infinite Coxeter groups.
	
	In this paper, we prove Dyer's conjecture for Coxeter groups of affine type.  
	To do so, we introduce the notion of a clean arrangement, which is a hyperplane arrangement where the regions are in bijection with biclosed sets. We show that root poset order ideals in a finite or rank 3 untwisted affine root system are clean. We set up a general framework for reducing Dyer's conjecture to checking cleanliness of certain subarrangements. We conjecture this framework can be used to prove Dyer's conjecture for all Coxeter groups.
\end{abstract}

\maketitle


\section{Introduction}
The weak order is a partial order on a Coxeter group $W$. When $W$ is the symmetric group $S_n$, weak order coincides with inclusion order on the inversions of the permutations. Using root systems, one can define inversions for elements of an arbitrary Coxeter group and similarly characterize the weak order. From this perspective, weak order is tied heavily to the geometry of root systems and Coxeter arrangements. (A Coxeter arrangement is the arrangement of hyperplanes dual to a root system.) For instance, in a finite Coxeter group, the Hasse diagram of the weak order has the same underlying graph as the $1$-skeleton of the permutahedron for that group. One can use the geometry of Coxeter arrangements to understand the weak order and vice-versa (as accomplished thoroughly in \cite{Reading2016Ch10,Reading2016Ch9}). For instance, the fact that the regions of a finite Coxeter arrangement are all simplicial implies that the weak order of a finite Coxeter group admits meets and joins \cite{BEZ}. In other words, finite weak order is a \emph{lattice}. But for infinite Coxeter groups this is no longer the case. It was shown by Bj\"orner \cite{Bjorner1984} that the weak order is a meet-semilattice but never a lattice for such groups.
In this paper we continue a line of research, motivated by the work of Matthew Dyer, attempting to embed the weak order on infinite Coxeter groups into some larger complete lattice.

We discuss what could be the elements of such a larger lattice. 
To any Coxeter group $W$, there is an associated real root system $\Phi$ in a real vector space $V$; we write $\pos$ for the positive roots. 
There is then a dual hyperplane arrangement $\bigcup_{\beta \in \pos} \beta^{\perp}$ in the dual vector space $\Vd$.
To any region $R$ in the hyperplane arrangement complement, we can associate the set of positive roots $\beta$ such that $\langle -, \beta \rangle$ is negative on $R$.
A set of roots of this form is called \newword{separable}. The finite separable sets are precisely the sets of inversions of elements of $W$.
Once $W$ is infinite, the separable sets almost never form a lattice. We will therefore discuss larger classes of subsets of $\pos$ which include the separable sets and might form a lattice.

We can more generally consider \newword{weakly separable} sets of roots. A subset $C$ of $\pos$ is called \newword{weakly separable} if, for any finite collections $\beta_1$, $\beta_2$, $\dots$, $\beta_i$ of roots in $C$ and  $\gamma_1$, $\gamma_2$, $\dots$, $\gamma_j$ in $\pos \setminus C$, there is some $\theta \in \Vd$ with $\langle \theta, -\rangle$ 
negative on the $\beta$'s and positive on the $\gamma$'s. In rank $3$ Coxeter groups, weakly separable sets of roots form a lattice~\cite[Section 2.4]{Berlin2013} but, even for rank $4$ affine Coxeter groups, the weakly separable sets already do not form a lattice.

Dyer introduced the notion of \newword{biclosed sets}, first studied by Papi \cite{Papi1994}, which are more general than weakly separable sets.
A subset $B$ of $\pos$ is \newword{biclosed} if, for any two positive roots $\alpha$, $\beta$, and any root $\gamma$ which is a non-negative linear combination of $\alpha$ and $\beta$, whenever $\alpha$ and $\beta\in B$ then $\gamma\in B$, and whenever $\alpha$ and $\beta\not\in B$ then $\gamma\not\in B$. 
Equivalently, a biclosed set is one whose restriction to any 2-dimensional root subsystem is the inversion set of a region of the corresponding rank 2 Coxeter arrangement. The poset of all biclosed sets under inclusion is called the \newword{extended weak order} of the Coxeter group. 
The finite biclosed sets are exactly the inversions of elements of $W$ \cite[Proposition 4.2]{Blessenohl2005} (see also \cite{Papi1994,Dyer2019}). 
As a result, when $W$ is a finite Coxeter group, the extended weak order coincides with the weak order on $W$; in particular, it is a lattice.

In affine Coxeter groups of rank $3$, the biclosed sets are precisely the weakly separable sets \cite{Barkley2022}.
For higher rank Coxeter groups, biclosed sets are more general. For the affine Coxeter groups, the authors give a full classification of the biclosed sets in~\cite{Barkley2022}; this was also done in unpublished work of Dyer~\cite{Dyerpreprint}.
The current work is inspired by Dyer's conjecture:

\begin{conjecture}[\cite{Dyer2019,Dyer1994}]\label{conj:Dyer}
	The extended weak order of any Coxeter group is a complete lattice.
\end{conjecture}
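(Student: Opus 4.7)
The plan is to prove Conjecture 1.1 in the affine case by transferring lattice structure from hyperplane arrangement regions to biclosed sets through a new notion of a \emph{clean arrangement}. I would call a finite subset $\mathcal{A}$ of $\pos$ clean if the map sending a region of the subarrangement $\bigcup_{\beta \in \mathcal{A}} \beta^{\perp}$ in $\Vd$ to the set of $\beta \in \mathcal{A}$ on which $\theta$ is negative (for any $\theta$ in the region) is a bijection between regions and ``locally biclosed'' subsets of $\mathcal{A}$, i.e., subsets whose restriction to every rank two subsystem contained in $\mathcal{A}$ is biclosed. The full Coxeter arrangement on $\Vd$ is clean when $W$ is finite; the interesting cleanliness statements are about proper subsets when $W$ is infinite. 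Once $\mathcal{A}$ is clean, its locally biclosed sets inherit a complete lattice structure from the tope poset of the arrangement.

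The next step is a general mechanism showing that cleanliness of a sufficient family of finite subarrangements implies the complete lattice property for all biclosed sets of $\pos$. Given an exhausting family $\{\mathcal{A}_n\}$ of clean subarrangements, an arbitrary family $\{B_i\}$ of biclosed sets restricts on each $\mathcal{A}_n$ to a family of locally biclosed subsets, whose meet or join in the tope poset of $\mathcal{A}_n$ is well defined. Patching these fiberwise meets or joins yields a candidate meet or join for the original family; its biclosedness then follows because biclosedness is tested on rank two subsystems, each of which lies in some $\mathcal{A}_n$ where cleanliness guarantees the candidate restricts consistently.

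For the affine case I would invoke the classification of biclosed sets from \cite{Barkley2022} to pin down which subarrangements need to be checked. The natural candidates are those indexed by order ideals of the root poset, since such an ideal records exactly which roots of each given depth lie in the biclosed set. Finite-type order ideal subarrangements are clean by the classical identification of biclosed sets with inversion sets in a finite root system. The essential new input is cleanliness for root poset order ideals in the rank $3$ untwisted affine types, which should then bootstrap through a rank reduction, using the parabolic structure of affine Weyl groups, to yield cleanliness for all affine root systems.

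The main obstacle will be proving cleanliness for root poset order ideals in the rank $3$ untwisted affine case. One already knows from \cite{Berlin2013,Barkley2022} that biclosed sets in rank $3$ affine Coxeter groups form a lattice, but cleanliness is strictly stronger: it requires that after passing to a subarrangement indexed by a root poset ideal, every locally biclosed subset is realized by a single covector $\theta \in \Vd$, rather than only cut out by an intersection of half-spaces at higher levels. I would expect the proof to involve an explicit geometric analysis of how the imaginary roots (translates of the null root) interact with the finite positive roots, together with a case analysis of the three rank $3$ untwisted affine root systems. The rank reduction and patching arguments should be largely formal once the rank $3$ cleanliness is in hand.
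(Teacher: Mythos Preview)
Your overall strategy---introduce clean arrangements, reduce to root poset order ideals, and check the rank~$3$ untwisted affine cases---matches the paper's, but the mechanism you propose for passing from finite cleanliness to the global lattice property has a genuine gap.

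You plan to take an exhausting family $\{\mathcal{A}_n\}$ of clean subarrangements, form the join of $\{B_i \cap \mathcal{A}_n\}$ inside each $\mathcal{A}_n$, and ``patch.'' But you never establish the compatibility needed for patching: why should the join computed in $\mathcal{A}_{n+1}$ restrict to the join computed in $\mathcal{A}_n$? Cleanliness of each level gives you a bijection between regions and biclosed subsets, but it does not by itself guarantee that restriction from $\mathcal{A}_{n+1}$ to $\mathcal{A}_n$ is a lattice homomorphism, nor even that the poset of regions of each $\mathcal{A}_n$ is a lattice. The paper avoids this entirely: rather than patching level-wise joins, it proves directly that the closure of any coclosed set is biclosed (its Proposition~3.3, via the inductive Theorem~3.1). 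The join of a family is then simply $\overline{\bigcup B_i}$, with no compatibility to check. The induction in Theorem~3.1 is where cleanliness is actually used, and only on rank~$3$ full subsystems meeting the current initial segment---this is the content of the paper's notion of a \emph{suitable ordering}, which you are missing.

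Your ``rank reduction via parabolic structure'' is also not how the paper proceeds, and it is unclear it would work: parabolic subsystems are not the relevant objects. The paper instead uses the elementary fact (its Lemma~2.1) that any three positive roots lie in a full subsystem of rank at most~$3$; this is why only rank~$3$ cleanliness is needed, and it has nothing to do with parabolics. Finally, you expect to need the classification from \cite{Barkley2022} to identify which subarrangements to check; the paper uses that classification only peripherally (in the folding step for non-simply-laced types), whereas the core argument runs on the suitable-ordering machinery alone.
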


The first accomplishment of this paper is a proof of Conjecture~\ref{conj:Dyer} for affine Coxeter groups.
\begin{Atheorem}\hypertarget{thm:introlattice}
	The extended weak order of an affine Coxeter group is a complete lattice.
\end{Atheorem}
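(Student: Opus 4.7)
The overall plan is to combine the two pillars announced in the abstract: the ``clean arrangement'' framework that reduces Dyer's conjecture to a local cleanliness condition, together with the cleanliness theorem for root poset order ideals in finite or rank 3 untwisted affine root systems. Since the extended weak order has a unique minimum (the empty biclosed set $\emptyset \subseteq \pos$), it is a complete lattice as soon as it admits arbitrary joins, or equivalently, arbitrary meets.

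The first step is to make precise the local reduction. Given a family $\{B_i\}_{i \in I}$ of biclosed sets, one identifies a hyperplane subarrangement determined by the positive roots on which the $B_i$ disagree, together with ``outer data'' consisting of the roots on which all $B_i$ agree. The clean arrangement framework says that if this subarrangement is clean, then its regions are in bijection with the biclosed subsets of the corresponding sub-root system $\Psi^+ \subseteq \pos$, containment in extended weak order corresponds to a geometric order on regions, and the meet of $\{B_i\}$ is read off as the unique region realizing the geometric infimum of the regions associated to the $B_i$.

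The second step is to use the classification of biclosed sets in affine type from~\cite{Barkley2022} to identify which subsystems $\Psi$ actually arise in this reduction. In affine type, every biclosed set decomposes in a controlled way relative to a finite type sub-root system together with infinite ``tails'' along imaginary directions. After absorbing the common tails of the $B_i$ into the outer data, the disagreement between any family of biclosed sets should be encoded by a root poset order ideal in a finite or rank 3 untwisted affine subsystem. The cleanliness theorem then applies and produces the required meet.

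The principal obstacle, and probably the most delicate step, is showing that the subarrangement arising in the local reduction is always clean in affine type. Unlike in the finite case, one cannot simply invoke cleanliness of the whole Coxeter arrangement, since for affine Coxeter groups of rank at least four it must fail somewhere. Instead the argument must show that for \emph{every} family of biclosed sets, a finite type or rank 3 untwisted affine sub-root system can be chosen so that the relevant discrepancy roots lie in a root poset order ideal of that subsystem. I expect this reduction requires a case analysis across the untwisted and twisted affine types, with careful use of the imaginary cone to ensure that any relevant collection of roots can be localized to a tame subsystem falling within the scope of the cleanliness theorem.
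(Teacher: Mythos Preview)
Your proposal misidentifies how the clean arrangement framework is actually deployed. The paper does \emph{not} localize each meet/join computation to a subarrangement determined by the ``disagreement roots'' of a given family $\{B_i\}$. Instead, it fixes once and for all a total ordering $\gamma_1, \gamma_2, \ldots$ of $\pos$ refining the root poset, and proves that this ordering is \emph{suitable}: for every finite initial segment $X_m$ and every triple of roots in $X_m$, there is a full rank-$\leq 3$ subsystem $F$ containing them with $F \cap X_m$ clean. Suitability is then used to prove, by an induction climbing up the ordering one root at a time, that the closure of any coclosed subset of $\pos$ is biclosed (\Cref{prop:ClosureOfCoclosed}). Since an arbitrary union of biclosed sets is coclosed, the join formula $\bigvee_i B_i = \overline{\bigcup_i B_i}$ follows immediately (\Cref{thm:BiclosedLattice}), uniformly and with no reference to the structure of any particular family $\{B_i\}$. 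The classification in \cite{Barkley2022} plays only a peripheral role, inside the folding argument for the non-simply-laced rank-3 cases.

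Your localization scheme has a genuine gap. Take the family $\{\emptyset, \pos\}$: these disagree on every positive root, so your ``disagreement subarrangement'' is the full affine Coxeter arrangement. In rank $\geq 4$ this arrangement is \emph{not} clean---there exist biclosed sets that are not weakly separable (indeed, the paper notes that weakly separable sets already fail to form a lattice in rank-4 affine type, whereas biclosed sets do by Theorem~A). More generally, nothing in the classification from \cite{Barkley2022} forces the roots on which an arbitrary family of biclosed sets disagree to lie inside a root poset order ideal of a finite or rank-3 subsystem; your ``absorbing common tails'' step does not produce such a reduction. The paper sidesteps this entirely: cleanliness is only ever invoked on \emph{finite} order ideals intersected with rank-3 full subsystems, and only inside the inductive proof that closure of a coclosed set is biclosed---never on a set tailored to a particular family $\{B_i\}$.
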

Until now this theorem was known only for the rank 3 affine groups \cite{Wang2018} and for types $\tA$ and $\tC$ \cite{Barkley2022}.
The key to the proof of Theorem~\hyperlink{thm:introlattice}{A} is to develop a method to reduce the general problem to the rank $3$ case, which we now explain.
Let $V$ be a finite dimensional real vector space and let $X$ be a finite subset of $V$ lying in an open halfspace.
We define separable subsets of $X$ and biclosed subsets of $X$ similarly to how we did for $\pos$ above. (See Section~\ref{sec:setsofroots} for details.)
We define $X$ to be \newword{clean} if every biclosed subset of $X$ is separable; we call the dual hyperplane arrangement $\bigcup_{\beta \in X} \beta^{\perp}$ to be a \newword{clean arrangement}. 
Remarks~\ref{rem:cleanlit} and \ref{rem:cleanrootposet} discuss previous work on clean arrangements and related notions.

Let $\Phi \subset V$ be a root system. 
We will define an ordering $\beta_1$, $\beta_2$, $\beta_3$, \dots of the positive roots $\pos$ to be \newword{suitable} if, for every initial section $\{ \beta_1, \beta_2, \ldots, \beta_N \}$ and every rank $3$ subsystem $R$, the intersection $\{ \beta_1, \beta_2, \ldots, \beta_N \} \cap R$ is clean.
(See~\Cref{sec:subsystems} for the definition of a rank $3$ subsystem.)
Assume $\pos$ has a suitable ordering. Our main technical results (see \Cref{thm:KeyExtensionLemma} and its variants in \Cref{sec:extend}) are about extending subsets of initial sections to biclosed sets in $\pos$. 
Using \Cref{thm:KeyExtensionLemma}, we deduce that if $\pos$ has a suitable ordering, then biclosed sets of $\pos$ form a complete lattice (\Cref{thm:BiclosedLattice}).
We also show that, if $B$ is biclosed in  $\{ \beta_1, \beta_2, \ldots, \beta_N \}$, then there is a unique minimal biclosed set $\overline{B}$ in $\pos$ with $B = \overline{B} \cap \{ \beta_1, \beta_2, \ldots, \beta_N \}$. 
From this, many statements about extended weak order reduce to statements about the finite poset of biclosed subsets of $\{\beta_1,\ldots,\beta_N\}$.

In \Cref{sec:prelim}, we define the various properties a set of vectors can have and define the types of root system we will consider. In \Cref{sec:extend}, we prove the main structure theorem on biclosed sets in the presence of suitable orders. In \Cref{sec:biclosedlattice}, we show how this structure theorem implies the lattice property. The key input to the results of those sections is the existence of a suitable ordering; in \Cref{sec:EasySets} we give basic examples of suitable orders.
In \Cref{sec:SuitableStart} we discuss preliminary steps to proving that an order is suitable.

We then turn to the task of proving that affine root systems have suitable orders.
Let $\Phi$ be an untwisted affine root system.
We will show that, if we take the (crystallographic) root poset on $\pos$, and take any total order refining it, this is a suitable order.
The restriction of the root order to any rank $3$ subsystem is a refinement of the root order on that subsystem.
Thus, concretely, what we ultimately show is:
\begin{Btheorem}\hypertarget{thm:introclean}
	Let $\Phi$ be a finite crystallographic root system or a rank 3 untwisted affine root system.
	Let $I$ be an order ideal in the root poset on $\pos$. Then $I$ is clean.
\end{Btheorem}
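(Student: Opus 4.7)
My plan is to prove Theorem~B by an extension-based induction on $|\pos \setminus I|$ in the finite crystallographic case, with a parallel argument for the rank $3$ untwisted affine case. The base case is $I = \pos$: in the finite crystallographic setting, Papi's classical theorem \cite{Papi1994} implies that biclosed subsets of $\pos$ are exactly inversion sets of Weyl group elements, hence separable via the normal to the corresponding Weyl chamber. For rank $3$ untwisted affine, the base case would invoke the classification of biclosed subsets from \cite{Barkley2022}.

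For the inductive step, I would pick $\alpha$ to be a minimal element of the filter $\pos \setminus I$ in the root poset, so that $I_1 := I \cup \{\alpha\}$ remains an order ideal. Given $B$ biclosed in $I$, I would aim to extend $B$ to a biclosed $B_1 \in \{B, B \cup \{\alpha\}\}$ of $I_1$; by induction $B_1$ is separable in $I_1$, yielding separability of $B$ by restriction. By minimality of $\alpha$, any nontrivial nonneg combination of $\alpha$ with a root of $I$ strictly exceeds $\alpha$ in the root poset and thus lies outside $I_1$. Consequently, the biclosedness of $B_1$ in $I_1$ reduces to a single check: $B_1 = B$ is biclosed iff no pair in $B$ has $\alpha$ as a nonneg combination, while $B_1 = B \cup \{\alpha\}$ is biclosed iff no pair in $I \setminus B$ has this property.

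The key technical step is an Extension Lemma: at least one of these two conditions holds. Suppose for contradiction both fail, yielding $\alpha = a_1\beta_1 + b_1\gamma_1 = a_2\beta_2 + b_2\gamma_2$ with $\beta_1, \gamma_1 \in B$ and $\beta_2, \gamma_2 \in I \setminus B$. The argument I envision passes to the rank-$3$ subsystem containing these four roots. The parallelogram identity produces a ``shift'' vector $\eta$ (for instance, $\eta := a_2\beta_2 - a_1\beta_1 = b_1\gamma_1 - b_2\gamma_2$ in the simply-laced case with unit coefficients) which, using the crystallographic rank-$3$ structure, is itself a positive root. Comparing with $\alpha$ via $\alpha - \eta = a_1\beta_1 + b_2\gamma_2$ shows $\eta \prec \alpha$ in the root poset, so $\eta \in I$ by minimality of $\alpha$. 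Biclosedness of $B$ applied to the relations $\beta_2 = \beta_1 + \eta$ and $\gamma_1 = \gamma_2 + \eta$ then forces a contradiction: if $\eta \in B$ then the pair $\{\beta_1,\eta\} \subseteq B$ forces $\beta_2 \in B$; if $\eta \in I \setminus B$ then $\{\gamma_2,\eta\} \subseteq I \setminus B$ forces $\gamma_1 \in I \setminus B$.

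The main obstacle will be executing this parallelogram argument uniformly across all relevant root systems, especially the non-simply-laced types ($B_n, C_n, F_4, G_2$) and the rank $3$ untwisted affine types, where the decompositions of $\alpha$ as a nonneg combination of two positive roots can involve coefficients greater than $1$ and the vector $\eta$ may need to be replaced by a more delicate choice of shift (possibly involving a sequence of elementary squares rather than a single one). This likely requires a detailed analysis of rank-$2$ subsystems inside rank-$3$ crystallographic and untwisted affine root systems, together with explicit verification that an appropriate shift root $\eta$ in $I$ can always be extracted from the parallelogram relation. A type-by-type check of rank-$3$ subsystems, or a uniform conceptual argument using root string theory and the classification of rank-$3$ subsystems, should complete the proof.
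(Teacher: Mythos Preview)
Your overall strategy---extend a biclosed $B\subseteq I$ step by step to a biclosed set in all of $\pos$, then invoke separability there---is exactly how the paper handles the finite case (Theorem~\ref{thm:FiniteClean}). The divergence is in how the one-step Extension Lemma is proved, and this is where your argument has a genuine gap.

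Your claim that ``any nontrivial nonneg combination of $\alpha$ with a root of $I$ strictly exceeds $\alpha$ in the root poset'' is false outside the simply-laced case. In $B_2$ with short root $\alpha_1$ and long root $\alpha_2$, take $\alpha=2\alpha_1+\alpha_2$ (the maximal root) and $\gamma=\alpha_2\in I$; then $\alpha_1+\alpha_2=\tfrac12\alpha+\tfrac12\gamma$ lies in $\Span_+(\alpha,\gamma)\cap I$ and is strictly \emph{below} $\alpha$. Consequently, biclosedness of $B\cup\{\alpha\}$ in $I_1$ does \emph{not} reduce to ``no pair in $I\setminus B$ spans $\alpha$'': one must also check every $\delta\in\Span_+(\alpha,\beta)\cap I$ for $\beta\in B$ (closedness) and every $\delta\in\Span_+(\alpha,\gamma)\cap I$ for $\gamma\in I\setminus B$ (coclosedness). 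So even if your parallelogram argument shows that (C1) or (C2) holds, that no longer suffices to conclude that one of the two candidate extensions is biclosed. A second, smaller issue: even in simply-laced types your particular shift $\eta=\beta_2-\beta_1$ need not be a root (in $D_4$, with $\alpha$ the highest root, $\beta_1=\alpha_1+\alpha_2$, $\beta_2=\alpha_2+\alpha_3$, one gets $\eta=\alpha_3-\alpha_1$). One can show that exactly one of $\beta_2-\beta_1$ and $\gamma_2-\beta_1$ is a root, so this is patchable, but it indicates that the ``uniform parallelogram'' picture is more delicate than stated.

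The paper sidesteps both difficulties by reversing the logical order: it first proves cleanliness of order ideals in the six rank~$3$ types $A_3,B_3,C_3,\tA_2,\tC_2,\tG_2$ (directly for $A_3,\tA_2$ in Sections~\ref{sec:FiniteSuitable}--\ref{sec:A2TildeSuitable}, and via folding from simply-laced types for the others in Sections~\ref{sec:Folding}--\ref{sec:FoldedTypes}). This rank~$3$ cleanliness is then packaged as the existence of a \emph{suitable order}, and the Extension Lemma is proved by a forward induction (Theorem~\ref{thm:KeyExtensionLemma}) in which every potential failure of the ``single check'' is resolved by passing to a clean rank~$3$ full subset and using separability there. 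In other words, the paper does not attempt a uniform parallelogram argument; the rank~$3$ case analysis is precisely the substitute for it.
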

We believe even the finite-type statement here is new. 
We thus also deduce
\begin{Ctheorem} \hypertarget{thm:introextend}
	Let $\Phi$ be a finite crystallographic root system or a rank 3 untwisted affine root system and let $I$ be an order ideal in the root poset on $\pos$. Let $B$ be biclosed in $I$. 
	Then there is a unique minimal biclosed set $\overline{B}$ in $\pos$ such that $\overline{B} \cap I = B$.
\end{Ctheorem}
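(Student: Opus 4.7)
The plan is to reduce Theorem~C to Theorem~B via the suitable-order machinery of \Cref{sec:extend} and \Cref{sec:biclosedlattice}. The central observation is that any linear extension of the root poset on $\pos$ whose initial segment of length $|I|$ is precisely $I$ will be a suitable order; granting this, \Cref{thm:KeyExtensionLemma} will yield existence of the extension while \Cref{thm:BiclosedLattice} will supply uniqueness of the minimum.

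To verify suitability, fix such a linear extension $\beta_1, \beta_2, \ldots$; this exists because $I$ is downward closed. Every initial segment $\{\beta_1, \ldots, \beta_N\}$ is an order ideal of the root poset on $\pos$, so for any rank $3$ subsystem $R$ the intersection $\{\beta_1, \ldots, \beta_N\} \cap R$ is an order ideal of the restriction of the root poset on $\pos$ to $R$. The paragraph preceding Theorem~B notes that this restricted order refines the intrinsic root poset on $R$; since order ideals of a refined poset are order ideals of the coarser poset, the intersection is also an order ideal of the intrinsic root poset on $R$. A rank $3$ subsystem of a finite crystallographic or rank $3$ untwisted affine root system is itself of one of these two types, so Theorem~B applies and gives cleanness. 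Hence the ordering is suitable.

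Taking $N = |I|$, \Cref{thm:KeyExtensionLemma} immediately produces a biclosed set in $\pos$ whose intersection with $I$ equals $B$, giving existence. For uniqueness of a minimum, let $\mathcal{C} := \{C \subseteq \pos : C \text{ biclosed}, C \cap I = B\}$; this is nonempty by existence. By \Cref{thm:BiclosedLattice} the meet $\bigwedge \mathcal{C}$ exists in the complete lattice of biclosed sets, and I would verify that it again lies in $\mathcal{C}$, yielding the desired minimum $\overline{B}$.

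The main obstacle is this last verification. The inclusion $(\bigwedge \mathcal{C}) \cap I \subseteq B$ is straightforward because meets lie below set-theoretic intersections, so $(\bigwedge \mathcal{C}) \cap I \subseteq \bigcap_{C \in \mathcal{C}} C \cap I = B$. The reverse inclusion requires producing a biclosed subset of $\bigcap \mathcal{C}$ that already contains $B$. I expect this to follow from a minimality property inherent to the construction in the proof of \Cref{thm:KeyExtensionLemma}: namely, a greedy procedure along the suitable order that adds each $\beta_i$ with $i > N$ only when biclosedness forces its inclusion, producing a canonical minimum extension whose existence automatically implies closure of $\mathcal{C}$ under arbitrary meets. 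Once this identification is in place, $\overline{B} = \bigwedge \mathcal{C}$ is the required unique minimum.
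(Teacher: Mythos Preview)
Your approach is essentially the paper's: verify that any linear extension of the root order is suitable (this is exactly the content of \Cref{sec:SuitableStart} and \Cref{thm:NonSimplyLacedOrdering}), then apply the machinery of \Cref{sec:extend}. Two remarks, however.

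First, \Cref{thm:KeyExtensionLemma} alone does not give you that the extension restricts to $B$ on $I$; it only says that the closure $\overline{B}$ of $B$ in $\pos$ is biclosed. You also need \Cref{prop:BiclosedExtend} (equivalently, \Cref{thm:BiclosedExtend}) to conclude $\overline{B}\cap I = B$.

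Second, your detour through meets for the uniqueness of the minimum is unnecessary, and the ``obstacle'' you identify evaporates once you notice what \Cref{thm:KeyExtensionLemma} actually produces: the set it hands you is literally the closure $\overline{B}$ of $B$ in $\pos$. Now if $C$ is any biclosed set in $\pos$ with $C\cap I = B$, then $C$ is closed and $B\subseteq C$, so $\overline{B}\subseteq C$. Thus $\overline{B}$ is the minimum element of your family $\mathcal{C}$ outright, with no appeal to \Cref{thm:BiclosedLattice} required. This is how the paper proceeds (it simply cites \Cref{thm:BiclosedExtend}), and it is exactly the ``minimality property inherent to the construction'' you allude to in your final paragraph---you just don't need to re-derive it through a lattice argument.

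A minor omission: you verified condition~(2) of suitability but not condition~(1). The latter is immediate from the remark in \S2.5 that the fundamental roots of any rank-2 subsystem are minimal in the induced root order.
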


Our work is also applicable to another conjecture of Dyer, which is referred to as ``Conjecture A''. It is the analog of the fact that maximal chains in the weak order on a finite Coxeter group (equivalently, reduced expressions for the longest element $w_0$) correspond to \emph{reflection orders} on $\pos$. A reflection order is a total ordering of $\pos$ so that every initial section of the order is biclosed. In any Coxeter group, the collection of initial sections of a fixed reflection order is a maximal chain in the extended weak order. Conjecture A asserts the converse.
\begin{conjecture}[Conjecture A \cite{Dyer2019,Dyerpreprint}]\label{conj:ConjA}
	Any maximal chain in the extended weak order is the set of initial sections of a unique total ordering on the positive roots.
\end{conjecture}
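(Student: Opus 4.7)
The plan is to associate to each maximal chain a total order on positive roots by tracking when each root enters the chain, and to prove injectivity of this assignment by reducing to finite order ideals and invoking Theorems~B and~C. Concretely, given a maximal chain $C$ of biclosed subsets of $\pos$, define for each $\beta \in \pos$ the up-set $U(\beta) := \{B \in C : \beta \in B\}$ and declare $\beta \prec \beta'$ if and only if $U(\beta) \supsetneq U(\beta')$, i.e., $\beta$ enters $C$ strictly before $\beta'$. If this is a strict total order, then each element of $C$ is the initial segment of $\prec$ corresponding to its position, and $\prec$ is uniquely determined by $C$; the entire content of the conjecture then collapses to injectivity of the map $\beta \mapsto U(\beta)$.

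For injectivity, I would work inside finite order ideals $I$ of the root poset. Given distinct $\beta, \beta' \in \pos$, pick a finite order ideal $I$ containing both. The key claim is that $C_I := \{B \cap I : B \in C\}$ is a maximal chain in the finite lattice of biclosed subsets of $I$. Granting this, the classical correspondence between maximal chains in finite weak order and reduced expressions, together with the clean property of $I$ established by Theorem~B, shows that $C_I$ determines a unique total order $\prec_I$ on $I$ whose initial segments are exactly the elements of $C_I$; in particular distinct roots of $I$ have distinct entry times, yielding $U(\beta) \neq U(\beta')$. The orders $\prec_I$ are automatically compatible under enlargement of $I$, so their colimit is the desired total order $\prec$ on all of $\pos$.

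The crux is maximality of $C_I$. Suppose a biclosed $D \subseteq I$ could be inserted into $C_I$ strictly between two consecutive elements $B_1 \cap I$ and $B_2 \cap I$. Using Theorem~C, let $\overline{D}$ be the unique minimal biclosed set in $\pos$ whose restriction to $I$ is $D$. Since $B_2$ is a biclosed set containing $D$, minimality gives $\overline{D} \subseteq B_2$. Using the lattice structure of Theorem~A, form the join $B_1 \vee \overline{D}$ inside biclosed sets of $\pos$; it automatically satisfies $B_1 \subseteq B_1 \vee \overline{D} \subseteq B_2$. If this join is strictly between $B_1$ and $B_2$, it can be inserted into $C$, contradicting maximality.

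The main obstacle is showing that $B_1 \vee \overline{D}$ is strictly below $B_2$, which reduces to the equality $(B_1 \vee \overline{D}) \cap I = (B_1 \cap I) \cup D$: the join should introduce no new roots of $I$ beyond those already in $B_1$ or $D$. This is a commutation statement between the extension operator of Theorem~C, the lattice join of Theorem~A, and restriction to an order ideal. It should follow from the cleanness of $I$, since in a clean setting biclosed subsets of $I$ correspond to regions of a hyperplane arrangement where joining and restricting have transparent geometric meaning; however, translating this heuristic into a rigorous commutation lemma, valid in all the affine subsystems handled by Theorem~B, is where the real technical work would lie.
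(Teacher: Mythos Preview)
Your overall architecture matches the paper's proof (Theorem~\ref{thm:ConjectureA}): restrict the chain to a finite piece, find an intermediate biclosed set there, lift it via closure, join with $B_1$, and insert the result into $C$ to contradict maximality. The paper uses initial segments $X_m$ of a suitable order rather than root-poset order ideals, but these coincide since suitable orders refine the root order.

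The commutation $(B_1 \vee \overline{D}) \cap I = D$ that you flag as the crux is indeed the heart of the matter, and it is provable---but not through cleanness of $I$. Theorem~B only gives cleanness in finite type and in rank-$3$ affine type, so for a general affine $\Phi$ your finite order ideal $I$ is not known to be clean, and your geometric heuristic has no footing. The paper instead argues combinatorially: since $D$ sits above $B_1 \cap X_m$, one has $(B_1 \cup D) \cap X_m = D$, and the step-by-step closure construction of Theorem~\ref{thm:KeyExtensionLemma} (together with Proposition~\ref{prop:BiclosedExtend}) then shows that closing $B_1 \cup D$ in all of $\pos$ introduces nothing new inside $X_m$, because the first $m$ steps of that construction depend only on the restricted seed $D$. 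So it is the inductive structure of the suitable order, not any separability of biclosed sets in $I$, that drives the commutation.

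The same over-reliance on cleanness appears in your ``classical correspondence'' step. The paper proves the needed fact---that covers among biclosed sets in $X_m$ differ by exactly one root---directly as Lemma~\ref{thm:ConjectureAFinite}, again via suitable-order arguments rather than hyperplane geometry.

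One smaller gap: merely having $B_1 \subsetneq B_1 \vee \overline{D} \subsetneq B_2$ does not let you insert $B_1 \vee \overline{D}$ into $C$; you also need comparability with every element of $C$. The paper secures this by taking $B_1$ and $B_2$ to be the supremum and infimum of the relevant sub-chains of $C$, so that nothing in $C$ lies strictly between them.
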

Dyer has already proven Conjecture A for affine Coxeter groups in an unreleased work~\cite{Dyerpreprint}. We give a new proof of this fact for affine Coxeter groups, by showing (in \Cref{thm:ConjectureA}) that any root system with a suitable ordering satisfies the conjecture.

\begin{Dtheorem} \hypertarget{thm:introConjD}
	Let $\Phi$ be a finite crystallographic root system or an untwisted affine root system.
	Any maximal chain in the extended weak order on $\Phi$ is the set of initial sections of a unique total ordering on the positive roots.
\end{Dtheorem}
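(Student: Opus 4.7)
The plan is to deduce Theorem~\hyperlink{thm:introConjD}{D} from Theorem~\hyperlink{thm:introclean}{B} together with the general principle (\Cref{thm:ConjectureA}) that any root system admitting a suitable ordering satisfies Conjecture~\ref{conj:ConjA}. With that principle granted, the only remaining task is to exhibit a suitable ordering on $\pos$, which I would do by fixing any total order $\beta_1,\beta_2,\ldots$ on $\pos$ that is a linear extension of the crystallographic root poset.

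To verify suitability, let $R$ be any rank $3$ root subsystem. The restriction of the total order to $R^+$ is a linear extension of the induced partial order, which in turn refines the root poset of $R$; hence each set $\{\beta_1,\ldots,\beta_N\}\cap R^+$ is an order ideal in the root poset on $R$. When $\Phi$ is finite crystallographic, every such $R$ is itself finite crystallographic; when $\Phi$ is untwisted affine, every such $R$ is either finite crystallographic or rank $3$ untwisted affine. In every case Theorem~\hyperlink{thm:introclean}{B} asserts the order ideal is clean, so the ordering is suitable.

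With a suitable ordering produced, one invokes \Cref{thm:ConjectureA}. The idea behind that theorem is to associate to a maximal chain $\cC$ in the extended weak order the map $\beta\mapsto B(\beta):=\bigcap\{B\in\cC : \beta\in B\}$, which exists in $\cC$ because the extended weak order is a complete lattice (by Theorem~\hyperlink{thm:introlattice}{A}), and to totally order $\pos$ by $\beta<\gamma$ iff $B(\beta)\subsetneq B(\gamma)$, breaking ties using the suitable ordering. The main obstacle is showing that every cover $B\lessdot B'$ in $\cC$ satisfies $|B'\setminus B|=1$: if not, one passes to a finite initial segment $\{\beta_1,\ldots,\beta_N\}$ containing $B'\setminus B$, uses cleanliness (via suitability) to find a separable, hence biclosed, set in $\{\beta_1,\ldots,\beta_N\}$ strictly between $B\cap\{\beta_1,\ldots,\beta_N\}$ and $B'\cap\{\beta_1,\ldots,\beta_N\}$, and then applies the extension lemma (\Cref{thm:KeyExtensionLemma}) to lift it to a biclosed subset of $\pos$ strictly between $B$ and $B'$, contradicting the maximality of $\cC$. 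Once this singleton-cover property is in place, uniqueness of the total ordering is automatic, and the initial segments of $\cC$ are tautologically the initial segments of the constructed order.
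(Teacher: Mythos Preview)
Your proposal is correct and follows exactly the paper's route: linearly extend the root poset, reduce suitability (via \Cref{thm:rank3existence} and the easy reducible/rank~$\leq 2$ cases) to cleanliness of order ideals in irreducible rank~$3$ subsystems, invoke Theorem~\hyperlink{thm:introclean}{B}, and then apply \Cref{thm:ConjectureA}.

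One caveat about your informal sketch of \Cref{thm:ConjectureA}: you phrase the key step as showing that every cover $B\lessdot B'$ in $\cC$ satisfies $|B'\setminus B|=1$, and propose passing to a finite initial segment containing $B'\setminus B$. But a maximal chain in an infinite poset need not have any cover relations, and even when it does, $B'\setminus B$ need not be finite a priori, so that step as written does not go through. The paper's proof instead fixes an arbitrary \emph{pair} $\alpha,\beta\in\pos$, takes $B_2$ the least element of $\cC$ containing both and $B_1$ the greatest element containing neither, restricts to a finite initial segment $X_m$ containing only $\alpha$ and $\beta$, applies \Cref{thm:ConjectureAFinite} there, and lifts via \Cref{thm:KeyExtensionLemma} and a join with $B_1$. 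Pairwise separation then yields the total order directly. Since you are invoking \Cref{thm:ConjectureA} as an established result rather than reproving it, this does not affect the correctness of your deduction of Theorem~\hyperlink{thm:introConjD}{D}.
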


We remark that we have reduced Dyer's Conjectures \ref{conj:Dyer} and \ref{conj:ConjA}, which are the two ``main conjectures'' on biclosed sets, to the problem of finding a suitable order on a root system. We expect that this can always be done.
\begin{conjecture}\label{conj:suitable}
	Any root system has a suitable order.
\end{conjecture}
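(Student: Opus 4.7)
The plan is to propose a canonical total order on $\pos$ and reduce suitability to a cleanliness statement for arbitrary rank $3$ Coxeter root systems. Since the definition of suitability is entirely local to rank $3$ subsystems, the conjecture amounts to the following: for every rank $3$ Coxeter root system $R$, there is a natural order whose every initial segment is clean, and these orders glue to refine a single order on $\pos$. Theorem~\hyperlink{thm:introclean}{B} establishes this when $R$ is finite crystallographic or untwisted affine of rank $3$, so what remains is to handle the non-crystallographic finite (e.g.\ $H_3$), the twisted affine, and the rank $3$ \emph{hyperbolic} cases.

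The candidate order I would use is a refinement of the \emph{dominance order} of Brink--Howlett on $\pos$. In the untwisted affine case this order refines the crystallographic root poset, so the present paper's arguments already show it is suitable. More generally, dominance order is defined for every Coxeter root system, and its restriction to a rank $3$ subsystem refines the dominance order of that subsystem. With this choice fixed, the non-crystallographic finite rank $3$ cases and the twisted affine rank $3$ cases should yield to the techniques developed in \Cref{sec:EasySets} together with the finite-type half of Theorem~\hyperlink{thm:introclean}{B}, since in both situations the root poset is well-understood and has only finitely many ``levels'' of accumulation.

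The main obstacle is the rank $3$ hyperbolic case, where $\pos$ is infinite and accumulates on a limit set in the projective sphere at infinity of $V$. In the affine setting one may build a separating functional level-by-level, using the fact that, after normalization, all roots lie on finitely many horospheres, so any consistent system of rank $2$ closure conditions can be solved by an induction on depth. In the hyperbolic setting there is no such uniform bound, and an unbounded family of rank $2$ closure conditions must be satisfied simultaneously. A promising approach is to construct the separating functional as a limit of functionals separating the biclosed set on each finite initial segment, exploiting compactness of the sphere of directions in $\Vd$ together with a carefully chosen tie-breaking rule ensuring that the limiting functional inherits the correct sign pattern. Verifying that such a limit exists and respects all infinitely many closure conditions at once is, I expect, the principal difficulty; I would not be surprised if it requires a genuinely new geometric input beyond the tools developed in this paper, perhaps modelled on the dynamics of the Coxeter group action on the limit set.
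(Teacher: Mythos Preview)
This statement is \emph{Conjecture~\ref{conj:suitable}} in the paper, not a theorem: the paper does not prove it.  The paper establishes it only for finite crystallographic and untwisted affine root systems (Theorems~\ref{thm:SimplyLacedOrdering} and~\ref{thm:NonSimplyLacedOrdering}) and explicitly leaves the remaining cases open.  Your proposal is not a proof either; it is a research outline, and you signal this yourself (``should yield'', ``a promising approach'', ``I would not be surprised if it requires a genuinely new geometric input'').  So there is no proof to compare against the paper's, because neither side has one.

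That said, a few of your heuristics deserve scrutiny.  First, your optimism about the twisted affine rank~$3$ case is in tension with Section~\ref{sec:TwistedAffine} of the paper: there the authors exhibit an order ideal in the root poset of $D_3^{(2)}$ that is \emph{not} clean, and a biclosed set in it that does not extend.  So the obvious candidate order already fails there, and ``the techniques developed in \Cref{sec:EasySets} together with the finite-type half of Theorem~\hyperlink{thm:introclean}{B}'' are not enough without a genuinely different ordering.  Whether dominance order sidesteps this specific obstruction is something you would need to check directly, not assert.  Second, the claims that dominance order refines the crystallographic root poset in the untwisted affine case, and that its restriction to a rank~$3$ subsystem refines the intrinsic dominance order of that subsystem, are not obvious and would need proof; condition~(1) of suitability (fundamental vectors first in every rank~$2$ linear subset) also needs to be checked for whatever total refinement you choose.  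Finally, you correctly identify the rank~$3$ hyperbolic case as the crux, and your compactness-plus-tie-breaking sketch is reasonable as a direction, but as you acknowledge it is not yet an argument.
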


We note that \Cref{conj:suitable} implies the following conjecture. Its possibility was raised in \cite{McConvilleComm} and \cite[Remark 2.15]{Hohlweg2016}.

\begin{conjecture}\label{conj:cleanrank3}
	If $\Phi$ is a rank 3 root system, then $\pos$ is clean.
\end{conjecture}

We now discuss how we have chosen to organize this paper. 
As described above, Sections~\ref{sec:prelim} through~\ref{sec:SuitableStart} introduce the general properties of suitable orderings, but do not prove that particular root systems have suitable orders.
In order to construct suitable orders, one must prove Theorem~\hyperlink{thm:introclean}{B} in types $A_3$, $B_3$, $C_3$, $\tA_2$, $\tC_2$ and $\tG_2$.
It is possible to verify all six types by extensive case checking.
The authors have carried out this task, but it requires many cases.

Instead, we  carry out the direct proof of Theorem~\hyperlink{thm:introclean}{B} only in types $A_3$ and $\tA_2$.
Once we have done this, we will be able to deduce   Theorems~\hyperlink{thm:introlattice}{A} and~\hyperlink{thm:introextend}{C} in the simply laced types: $A_n$, $\tA_n$,  $D_n$, $\tD_n$, and $E_n$ and $\tE_n$ for $n=6$, $7$, $8$.

We then use the technique of ``folding'' to transfer results along $A_5 \to C_3$, $D_4 \to B_3$, $\tA_3 \to \tC_2$ and $\tD_4 \to \tG_2$. We use these foldings to deduce Theorem~\hyperlink{thm:introclean}{B} in all crystallographic finite and affine types.
This then implies Theorem~\hyperlink{thm:introlattice}{A},~\hyperlink{thm:introextend}{C}, and~\hyperlink{thm:introConjD}{D} in all of these cases.

In \Cref{sec:FiniteSuitable,sec:A2TildeSuitable} we prove Theorem~\hyperlink{thm:introclean}{B} in types $A_3$ and $\tA_2$. 
In \Cref{sec:Folding}, we carry out the folding argument to deduce Theorem~\hyperlink{thm:introclean}{B} in types $B_3$, $C_3$, $\tC_2$ and $\tG_2$.
In \Cref{sec:FoldedTypes}, we conclude the proofs of our main results.
In \Cref{sec:TwistedAffine,sec:TypeH}, we explain how our results change in twisted affine root systems and in non-crystallographic root systems.

GTB was supported by NSF grants DMS-2152991, DMS-1854512, and DMS-1600223. DES was supported by NSF grants DMS-2246570, DMS-1600223, DMS-1854225 and DMS-1855135. The authors would also like to thank Matthew Dyer and Thomas McConville for helpful communications.

\section{Preliminaries}\label{sec:prelim}

\subsection{Notions of closure in sets of vectors}\label{sec:setsofroots}
Let $V$ be a finite dimensional real vector space and let $\Vd$ be the dual space. For a subset $A$ of $V$, we write $\Spanp(A)$ for the set of nonnegative linear combinations of vectors in A.

Let $X$ be a subset of $V$, and $B$ a subset of $X$. We make the following (standard) definitions:

\begin{itemize}
	\item $B$ is \newword{closed in $X$} if, whenever $\alpha$ and $\beta \in B$, and $\gamma\in \Spanp(\alpha,\beta)\cap X$, then $\gamma\in B$. We say that $B$ is \newword{coclosed in $X$} if $X\setminus B$ is closed in $X$.
	
	\item $B$ is \newword{convex in $X$} if 
	$\Spanp(B)\cap X = B$. 
	We say that $B$ is \newword{coconvex in $X$} if $X\setminus B$ is convex in $X$. 
	
	\item $B$ is \newword{biclosed in $X$} if $B$ is closed and coclosed in $X$.
	
	\item $B$ is \newword{biconvex in $X$} if $B$ is convex and coconvex in $X$.
	
	\item $B$ is \newword{weakly separable in $X$} if $\Spanp(B)\cap \Spanp(X\setminus B)=\{0\}$. 
	
	\item $B$ is \newword{separable in $X$} if there is a dual vector $\theta\in \Vd$ such that $B= \{\alpha\in X : \langle \theta, \alpha\rangle < 0\}$ and $X\setminus B = \{ \alpha\in X : \langle \theta, \alpha \rangle > 0 \}.$ 
\end{itemize}
We have the immediate implications shown in Figure~\ref{fig:Implications}.

\begin{figure}[h]
	\[ \xymatrix{
		& \text{separable} \ar@{=>}[d]  & \\
		& \text{weakly separable} \ar@{=>}@<-1ex>[d] & \\
		\text{convex} \ar@{=>}[d]  &\text{biconvex}  \ar@{=>}[r] \ar@{=>}[l]  \ar@{=>}@<-1ex>[d] \ar@2{-->}@<-1ex>[u]  & \text{coconvex} \ar@{=>}[d]  \\
		\text{closed} &\text{biclosed} \ar@2{-->}@<-1ex>[u]  \ar@{=>}[r] \ar@{=>}[l]  & \text{coclosed} \\
	} \]
	\caption{Implications between different notions of convexity are shown with solid arrows. If $X$ is clean, then the dashed implications also hold.} \label{fig:Implications}
\end{figure}

If $X$ is finite, then Farkas' lemma (see, e.g., \cite{StoerJosef2012}) 
states that weakly separable implies separable. For infinite $X$, the hyperplane separation theorem \cite{StoerJosef2012} 
states that for any weakly separable $B$, there is some nonzero functional $\theta\in \Vd$ such that $\langle \theta, \alpha \rangle \leq 0$ for all $\alpha\in B$ and such that $\langle \theta, \alpha\rangle \geq 0$ for all $\alpha\in X\setminus B$. See \cite{Berlin2013,Hohlweg2016} for more comparisons between these definitions and examples distinguishing them (note that their ``separable'' is our ``weakly separable''). We say that $X$ is \newword{clean} if every biclosed set in $X$ is also weakly separable. The implications of cleanliness are shown in \Cref{fig:Implications}.  See \Cref{sec:clean} for more discussion.

\subsection{Coxeter groups and root systems}
A \newword{Coxeter system} is a pair $(W,S)$ consisting of a group $W$ and a set $S = \{s_1,\ldots,s_n\}\subseteq W$ of order-2 elements such that $S$ generates $W$ and the relations between elements of $S$ give $W$ a presentation of the form
\[ W \cong \langle s\in S \mid (s_is_j)^{m_{ij}} = 1 \rangle, \]
where $(m_{ij})_{i,j=1}^n$ is a symmetric matrix such that $m_{ii} = 1$ for all $i$ and $m_{ij} \in \{2,3,4,\ldots, \infty \}$. When $m_{ij}=\infty$, this means that there is no relation of the form $(s_is_j)^m=1$ appearing in the presentation. The matrix $(m_{ij})$ is called the \newword{Coxeter matrix} and determines the system up to isomorphism. The elements of $S$ are called \newword{simple generators} and the cardinality of $S$ is called the \newword{rank} of the Coxeter system. When $W$ is part of an understood Coxeter system $(W,S)$, we say that $W$ is a \newword{Coxeter group} and suppress $S$ from the notation. We say $W$ is \newword{reducible} if $S$ can be partitioned into nonempty sets $S_1,S_2$ such that $s$ and $s'$ commute when $s\in S_1$ and $s'\in S_2$, and otherwise we say $W$ is \newword{irreducible}.

Coxeter groups naturally arise as groups of reflections acting on a vector space. There is a very general notion of a ``root system'' governing this correspondence; we will focus on a special case here. Let $V$ be a real vector space equipped with a symmetric bilinear form $(-,-)$. 
Given a vector $\alpha\in V$ such that $(\alpha,\alpha)\neq 0$, we can define the \newword{reflection} over $\alpha$ to be the linear map $t_\alpha : V\to V$ defined by
\[ t_{\alpha}(\beta) = \beta - 2\frac{(\alpha,\beta)}{(\alpha,\alpha)}\alpha. \]
A (symmetrizable, crystallographic, real, reduced) \newword{root system} in $V$ is a subset $\Phi\subseteq V$ satisfying the following properties:
\begin{itemize}
	\item For all $\alpha\in \Phi$ we have $(\alpha,\alpha)>0$ and $t_\alpha\Phi = \Phi$.
	\item For all $\alpha\in \Phi$ we have $\RR\alpha\cap \Phi = \{\pm \alpha\}$.  
	\item There exists a set $\Pi\subseteq \Phi$ satisfying:
	\begin{itemize}
		\item For all $\alpha\in \Phi$, either $\alpha \in \Spanp(\Pi)$ or $-\alpha\in \Spanp(\Pi)$ but not both.
		\item For all $\alpha\in \Pi$, we have $\alpha \not\in \Spanp(\Pi\setminus\{\alpha\})$.
		\item $\Phi \subseteq \Span_{\ZZ}\Pi$.
	\end{itemize}
\end{itemize}
A set $\Pi$ as above is called a \newword{base} for the root system $\Phi$. The root systems we will consider come with a chosen base $\Pi=\{\alpha_1,\ldots, \alpha_n \}$. The elements of $\Pi$ are called \newword{fundamental roots}. The size of $\Pi$ is called the (abstract) \newword{rank} of $\Phi$. When needed, we call the dimension of the span of $\Phi$ the \newword{linear rank} of $\Phi$. If the elements of $\Pi$ are linearly independent (equivalently, rank equals linear rank) then we say that $\Phi$ is \newword{geometrically embedded}. We write $\pos \coloneqq \Phi\cap \Spanp\Pi$ for the set of roots in the nonnegative span of $\Pi$; its elements are called the \newword{positive roots} of $\Phi$. Similarly we can define $\Phi^-$, the \newword{negative roots} of $\Phi$. Then the defining properties of $\Pi$ imply that $\Phi = \pos\sqcup \Phi^-$. We say two root systems $\Phi_1, \Phi_2$ in vector spaces $V_1,V_2$ are \newword{abstractly isomorphic} if there is a bijection between $\Phi_1$ and $\Phi_2$ preserving the bases, the pairing $(-,-)$, and the action of reflections.

Given a root system $\Phi$ with base $\Pi = \{\alpha_1,\ldots,\alpha_n\}$, we define the \newword{Weyl group} of $\Phi$ to be the group $W$ which is the subgroup of $\mathrm{GL}(V)$ generated by $\{t_\alpha \mid \alpha\in \Phi \}$. A fundamental consequence of the root system axioms is that $W$ is in fact generated by $S\coloneqq \{t_\alpha\mid \alpha\in \Pi\}$, and the pair $(W,S)$ is a Coxeter system. Hence we will freely refer to the Weyl group of $\Phi$ as a Coxeter group. We write $s_i \coloneqq t_{\alpha_i}$. Any root system gives rise to a unique Coxeter group in this way, but there may be multiple isomorphism classes of root systems associated to a given Coxeter group.

Associated to any root system with base $\Pi=\{\alpha_1,\ldots,\alpha_n\}$ is the matrix $(a_{ij})_{i,j=1}^n$, where $a_{ij} = 2\frac{(\alpha_i,\alpha_j)}{(\alpha_i,\alpha_i)}$. This is the \newword{Cartan matrix} of $\Phi$. Our definition of a root system implies that each $a_{ij}\in\ZZ$. The Cartan matrix does not determine the embedding $\Phi\hookrightarrow V$ since, for instance, we can take $V$ to have arbitrarily large dimension. It also does not determine the lengths of roots in $\Phi$. However, associated to each Cartan matrix and consistent choice of root length, there is a unique triple $(\Phi,\Pi,V)$ up to isomorphism such that $\Pi$ is a basis of $V$. 
This is the \newword{geometric realization} of $\Phi$.

We say that a root system $\Phi$ with base $\Pi$ is \newword{reducible} if one of the following equivalent conditions is satisfied:
\begin{itemize}
	\item There is a partition $\Phi = \Phi_1 \sqcup \Phi_2$ into nonempty parts such that if $\alpha\in\Phi_1$ and $\beta\in\Phi_2$ then $(\alpha,\beta)=0$.
	\item There is a partition $\Phi = \Pi_1 \sqcup \Pi_2$ into nonempty parts such that if $\alpha\in\Pi_1$ and $\beta\in\Pi_2$ then $(\alpha,\beta)=0$.
	\item The Weyl group of $W$ is reducible.
\end{itemize}
Otherwise, we say that $\Phi$ is \newword{irreducible}. Irreducible root systems are determined by their Cartan matrix up to an overall normalizing constant. An arbitrary root system decomposes into irreducible components.

\subsection{Finite and affine root systems}

The finite irreducible root systems $\Phi$ are classified by \newword{Dynkin diagrams} (see \Cref{fig:finitedynkin}). 
\begin{figure}
	\centering
	\begin{tabular}{|c|c|c|c|}
		\hline
		Name & Diagram & Name & Diagram \\
		\hline 
		$A_n$\rule{0pt}{2.6ex} & \dynkin[scale=1.5,labels={1,2,n-1,n},edge length=.5cm]{A}{} & $E_6$ & \dynkin[scale=1.5,label]{E}{6} \\
		\hline
		$B_n$\rule{0pt}{2.6ex} & \dynkin[scale=1.5,labels={1,2,n-2,n-1,n},edge length=.5cm]{B}{} & $E_7$ & \dynkin[scale=1.5,label]{E}{7} \\ 
		\hline
		$C_n$\rule{0pt}{2.6ex} & \dynkin[scale=1.5,labels={1,2,n-2,n-1,n},edge length=.5cm]{C}{} & $E_8$ & \dynkin[scale=1.5,label]{E}{8} \\ 
		\hline
		$D_n$ & \dynkin[scale=1.5,labels={1,2,n-3,n-2,n-1,n},label directions={,,,right,,},edge length = .4cm]{D}{} 	& $F_4$\rule{0pt}{2.6ex} & \dynkin[scale=1.5,label]{F}{4} \\ 
		\hline
		& & $G_2$\rule{0pt}{2.6ex} & \dynkin[scale=1.5,label]{G}{2} \\ 
		\hline
	\end{tabular}
	\caption{The Dynkin diagrams associated to finite irreducible root systems.}\label{fig:finitedynkin}
\end{figure}

%
The number of nodes in a Dynkin diagram is the rank of its associated root system $\Phi$. The node labeled $i$ in the Dynkin diagram corresponds to the fundamental root $\alpha_i$ of $\Phi$. A single edge between nodes $i$ and $j$ indicates that the Cartan matrix entries $a_{ij}$ and $a_{ji}$ are both $-1$. A double edge with an arrow pointing from $i$ to $j$ indicates that $a_{ij} = -1$ and $a_{ji} = -2$. A triple edge with an arrow pointing from $i$ to $j$ indicates that $a_{ij} = -1$ and $a_{ji}=-3$. (In particular, arrows always point from longer roots to shorter roots.) If there is no edge between $i$ and $j$, then $a_{ij}=a_{ji}=0$.  We will normalize our finite root systems in the usual way, so that the shortest roots $\alpha$ of each irreducible component all satisfy $(\alpha,\alpha)=2$. A root system $\Phi$ in a vector space $V$ with bilinear form $(-,-)$ is a finite root system if and only if the restriction of $(-,-)$ to the span of $\Phi$ is positive definite. Furthermore, any finite root system is always geometrically embedded. 

There are finite Coxeter groups (certain dihedral groups and $H_3,H_4$) which do not admit a root system in the sense discussed here (called \emph{non-crystallographic groups}). We will discuss the extent to which our results apply to these groups in \Cref{sec:TypeH}.

The simplest infinite root systems are the affine root systems. An irreducible \newword{affine root system} is an irreducible root system such that there is a unique one-dimensional subspace of the span of $\Phi$ which pairs to $0$ with any element of $\Phi$ under the bilinear form. An affine root system is one whose irreducible components are all finite or affine, with at least one component affine. 
In an irreducible affine root system, there is a unique vector $\delta\in\Spanp\Pi$ such that the intersection of $\Span_{\ZZ}\Pi$ with the one-dimensional subspace in the annihilator of $(-,-)$ is exactly $\ZZ\delta$. The nonzero integer multiples of $\delta$ are called \newword{imaginary roots} and $\delta$ is called the \newword{primitive imaginary root}. In this paper, we do not consider imaginary roots to be elements of the root system. (In other words, we work only with real root systems.)

There is a canonical way of turning a finite root system into an affine root system. Let $\Phi$ be a finite root system with span $V$. Then we construct a vector space $\widetilde V$ which is the direct sum of $V$ and a one-dimensional vector space spanned by a formal symbol $\delta$. We extend the inner product $(-,-)$ on $V$ to a bilinear form on $\widetilde V$ by declaring $(\delta,v) = 0$ for all $v\in \widetilde V$. Then we define $\widetilde\Phi$ to be the following subset of $\widetilde V$:
\[ \widetilde\Phi \coloneqq \{ \alpha + k\delta \mid \alpha\in \Phi,~ k\in\ZZ \}. \]
This is a root system, called the \newword{affinization} of $\Phi$, but its base is somewhat subtle to write down. If $\Phi$ is a finite irreducible root system then there is a unique root $\theta$, called the \newword{highest root}, such that $\alpha_i+\theta\not\in \Phi$ for all $\alpha_i\in \Pi$. If $\Phi$ is a reducible finite root system then there will be multiple highest roots $\theta_1,\ldots,\theta_m$ corresponding to the irreducible components of $\Phi$. Then a base for $\widetilde\Phi$ is given by
\[ \widetilde\Pi \coloneqq \Pi \sqcup \{ \delta-\theta_1, \ldots, \delta-\theta_m \}. \]
In particular, the rank of $\widetilde\Phi$ is always the rank of $\Phi$ plus the number of irreducible components of $\Phi$. Note that this implies that $\widetilde\Phi\hookrightarrow \widetilde V$ is geometrically embedded if and only if $\Phi$ is irreducible. It is also the case that $\widetilde\Phi$ is irreducible if and only if $\Phi$ is irreducible. In this case we write $\alpha_0$ for the fundamental root $\delta-\theta$. 

\begin{figure}
	\centering
	\begin{tabular}{|c|c|c|c|}
		\hline
		Name & Diagram & Name & Diagram \\
		\hline 
		$\tA_1$\rule{0pt}{2.6ex} & \dynkin[scale=1.5,extended,label]{A}{1} & $\tE_6$ & \dynkin[scale=1.5,label,extended]
		{E}{6} \\
		\hline
		$\tA_n$, $n \geq 2$ & \dynkin[scale=1.5,extended,labels={0,1,2,n-1,n},edge length=.5cm]{A}{}	 & $\tE_7$ & \dynkin[scale=1.5,label,extended]{E}{7} \\
		\hline
		$\tB_n$ & \dynkin[scale=1.5,extended,labels={0,1,2,3,n-2,n-1,n},edge length=.5cm]{B}{} & $\tE_8$ & \dynkin[scale=1.5,label,extended]{E}{8} \\
		\hline
		$\tC_n$\rule{0pt}{2.6ex} & \dynkin[scale=1.5,extended,labels={0,1,2,n-2,n-1,n},edge length=.5cm]{C}{}  & $\tF_4$\rule{0pt}{2.6ex} & \Tstrut \Bstrut \dynkin[scale=1.5,label,extended]{F}{4} \\
		\hline
		$\tD_n$ & \dynkin[scale=1.5,extended,labels={0,1,2,3,n-3,n-2,n-1,n},label directions={,,,,,right,,}, edge length = .4cm]{D}{} & $\tG_2$\rule{0pt}{2.6ex} & \dynkin[scale=1.5,label,extended]{G}{2} \\
		\hline
	\end{tabular}
	\caption{The extended Dynkin diagrams associated to untwisted affine root systems.}\label{fig:untwisteddynkin}
\end{figure}

The irreducible affine root systems that arise from affinization are called the \newword{untwisted affine root systems}. Just like with finite root systems, there are diagrams describing the irreducible affine root systems called extended Dynkin diagrams. The untwisted extended Dynkin diagrams are shown in \Cref{fig:untwisteddynkin}, where the node associated to the ``extra root'' $\alpha_0$ is colored white. These diagrams describe Cartan matrices using the same edge rules as Dynkin diagrams; there is one new case coming from the root system $\tA_1$, which has Cartan matrix entries $a_{01}=a_{10}=-2$. Note that the subscript on the name of an affine diagram is one less than the rank of the affine root system, so that, for instance, $\tA_3$ has rank 4. 

Every other irreducible affine root system can be obtained from the untwisted affine root systems by taking a $\widetilde\Phi$, partitioning it into subsets (at most three are needed) and rescaling all roots in each subset by a fixed constant. Such irreducible affine root systems which are not untwisted are called \newword{twisted}. 
We will describe the specific examples relevant to us in \Cref{sec:TwistedAffine}.

If $X$ is an irreducible root system from \Cref{fig:finitedynkin} or \Cref{fig:untwisteddynkin}, and $\Phi$ is a root system which is abstractly isomorphic to $X$, then we say $\Phi$ is \newword{of type} $X$. If $\Phi$ is reducible and has irreducible components which are abstractly isomorphic to $X^1,\ldots,X^m$, then we say $\Phi$ is of type $X^1\times\cdots\times X^m$.

\subsection{Root  subsystems}\label{sec:subsystems}
Let $\Phi \subset V$ be a root system with Weyl group $W$.
We will call a subset $\Lambda$ of $\Phi$ a \newword{root subsystem} if, for any roots $\alpha$ and $\beta$ in $\Lambda$, the reflection $t_\alpha\beta$ is also in $\Lambda$.
We write $\Lambda_+$ for $\Lambda \cap \pos$. 
Any subset $Y$ of $\Phi$ is contained in a unique smallest root subsystem, which we call the  \newword{root subsystem generated by $Y$.} A root subsystem $F$ of $\Phi$ is called \newword{full} if for any $\alpha$ and $\beta$ in $F$, we have $F \cap \Span\{\alpha,\beta\} = \Phi\cap \Span\{\alpha,\beta\}$. Similarly, any subset $Y$ of $\Phi$ is contained in a unique smallest full subsystem, which we call the  \newword{full subsystem generated by $Y$.} A root subsystem is itself a root system. A subsystem of an affine root system is finite or affine, and an irreducible subsystem of an untwisted affine root system is finite or untwisted affine.

A root $\gamma \in \Lambda_+$ is called \newword{fundamental in $\Lambda$} if $\gamma$ is not a positive linear combination of other roots in $\Lambda$. Write $\Pi_{\Lambda}$ for the set of fundamental roots in $\Lambda$; then $\Pi_{\Lambda}$ is a base for $\Lambda$.
We write $W_{\Lambda}$ for the subgroup of $W$ generated by the reflections over the roots in $\Lambda$.
A result of Dyer~\cite{Dyer1990} states that $W_{\Lambda}$ is a Coxeter group, with simple generators the reflections over the fundamental roots of $\Lambda$.
In particular, the rank of $\Lambda$ as a root system is equal to the rank of $W_{\Lambda}$ as a Coxeter group. 

We note that the rank of $\Lambda$ may be more than the linear rank  $\dim \Span(\Lambda)$, even when $\Phi$ is geometrically embedded. For example, in $\tA_3$, take 
\[\Lambda = \{ \alpha_0+\alpha_1+k \delta,  \alpha_1+\alpha_2+k \delta,  \alpha_2+\alpha_3+k \delta,  \alpha_0+\alpha_3+k \delta : k \in \ZZ \}.\] 
The fundamental roots are $\{  \alpha_0+\alpha_1,  \alpha_1+\alpha_2,  \alpha_2+\alpha_3,  \alpha_0+\alpha_3 \}$, so $\Lambda$ has rank $4$, but \[(\alpha_0+\alpha_1) + (\alpha_2+\alpha_3) = (\alpha_1+\alpha_2) + (\alpha_0+\alpha_3),\] 
so $\dim \Span(\Lambda)$ is only $3$. This example is explained by the fact that 
\[  \{\pm (\alpha_1+\alpha_2), \pm(\alpha_2+\alpha_3)   \}\]
is a root subsystem of $A_3$ of type $A_1\times A_1$. The subsystem $\Lambda$ defined above is exactly the affinization of this subsystem, which results in $\Lambda$ having type $\tA_1\times \tA_1$ and hence having rank $4$.
This phenomenon of root subsystems failing to be geometrically embedded is commonplace among affine root systems, and is a major source of difficulty in the theory of extended weak order.

\subsection{Root posets}
Given a root system $\Phi$ in a vector space $V$, we define a partial order on the elements of $\Phi$ by asserting that $\alpha \leq \beta$ if and only if $\beta-\alpha \in \Spanp\Pi$. This is called the \newword{root order} or the \newword{root poset} on $\Phi$. In general this partial order could depend on the realization $\Phi\hookrightarrow V$, but if $\Phi\hookrightarrow V_1,V_2$ are both geometric embeddings then they will have the same root poset. When needed, we refer to this canonical partial order as the \emph{abstract} root poset. The root poset on $\Phi\hookrightarrow V$ is always a refinement of the abstract root poset. In any root poset, the fundamental roots are exactly the set of minimal elements in the order.

If $\Lambda$ is a root subsystem of $\Phi$, then the root poset of $\Phi\hookrightarrow V$ restricts to a refinement of the root poset of $\Lambda\hookrightarrow V$. We note for future use that any linear order refining the root poset on a rank 2 root system must have the fundamental roots as its lowest two elements.

To see that the induced ordering on $\Lambda$ may fail to coincide with the abstract root poset of $\Lambda$, consider the example from the last subsection. The abstract root poset on $\Phi=\tA_3$ restricts to a root poset on the full subsystem $\Lambda\cong \tA_1\times\tA_1$ which has more relations than the abstract root poset; for instance, $\alpha_0+\alpha_1 < \alpha_1+\alpha_2+\delta$ but in the abstract root poset on $\Lambda$ these two elements are incomparable. It is also possible for the ordering on a subsystem to refine even its (non-abstract) root poset: $\{ \alpha_0+\alpha_1,\alpha_1+\alpha_2+\delta \}$ is the set of positive roots of a full rank 2 subsystem of $\Phi$, and any root poset on this rank 2 subsystem makes the two positive roots incomparable. Hence the induced ordering from $\Phi$ is a strict refinement of any root poset on this subsystem.

\subsection{Clean sets of vectors and suitable orderings}\label{sec:clean}
Return to the general setting of a subset $X$ of a real vector space $V$. Eventually we will take $X=\pos$ for some root system $\Phi$. Recall that $X$ is \emph{clean} if any $B\subseteq X$ which is biclosed in $X$ is also weakly separable in $X$. (See \Cref{fig:Implications} for the implications of cleanliness.) 

\begin{remark}\label{rem:cleanlit}
	If $X$ is a clean set, then we say its dual hyperplane arrangement is a \newword{clean arrangement}. Clean arrangements have received plenty of study, but have not been named until this point. In \cite{McConville2014} it was shown that simplicial arrangements and hypersolvable arrangements are both clean. Finite Coxeter arrangements have been known to be clean since earlier \cite{Dyer1994,Papi1994, Blessenohl2005}.
	It was observed in \cite{Hohlweg2016} that infinite rank 3 Coxeter arrangements seem to be clean. This would imply \Cref{conj:Dyer} for rank 3 Coxeter groups by the work of Labb\'e \cite[Section 2.4]{Berlin2013}. 
	Our \Cref{conj:cleanrank3} asserts that all rank 3 Coxeter arrangements are clean. 
	For affine Coxeter groups, this follows from the authors' classification of biclosed sets \cite{Barkley2022} or from work of Weijia Wang and Matthew Dyer \cite{Wang2018}. 
\end{remark}

\begin{remark}\label{rem:cleanrootposet}
	In \Cref{thm:FiniteClean}, we will show that root poset order ideals of a finite root system are clean. In \cite{Abe2016} it was shown that so-called ideal arrangements (hyperplane arrangements dual to root poset order ideals) are \emph{formal arrangements}, which roughly means that linear dependence can be checked on rank 2 subsystems. 
	It is also known that simplicial and hypersolvable arrangements are formal (see, e.g., \cite{Moller2023}).
	Thus the major examples of real hyperplane arrangements which are formal are also clean. But there exist arrangements over a finite field which are formal, while clean arrangements only make sense over ordered fields. In other words, being formal is a property of (a realization of) a matroid, while being clean is a property of (a realization of) an oriented matroid. We propose that cleanliness is an ``oriented version'' of formality. 
	As further evidence, since this article was released it has been shown that finite real $K(\pi,1)$ arrangements are clean \cite{Yoshinaga2024}; it is also known that finite $K(\pi,1)$ arrangements are formal \cite[Theorem 4.2]{Falk1987}. We leave the precise relationship between these notions open for future work.
\end{remark}

We will define a subset $Y$ of $X$ to be a \newword{linear subset} if $Y = X \cap L$ for some linear subspace $L$ of $V$; if this occurs then, of course, we can take $L = \Span_{\RR}(Y)$. We define  the \newword{dimension of $Y$} to be $\dim \Span_{\RR}(Y)$.
We note that linear subsets of dimension $\leq 2$ are automatically clean. 


We will be especially interested in sets $X$ that have lots of clean subsets. To make this precise, let $X$ be a finite or countable subset of $V$. We make the (strong) assumption that any 2-dimensional linear subset $Y$ of $X$ has \newword{fundamental vectors}; that is, there are vectors $\alpha,\beta\in Y$ such that $Y \subseteq \Spanp\{\alpha,\beta\}$ and $\alpha\not\in \Spanp Y\setminus\{\alpha\}$ and $\beta\not\in\Spanp Y\setminus \{\beta\}$.

Let $\gamma_1,\gamma_2,\gamma_3,\ldots$ be an ordering of $X$ and set $X_i\coloneqq \{ \gamma_1,\gamma_2,\ldots,\gamma_i \}$. We will say this ordering is \newword{suitable} if
\begin{enumerate}
	\item In every $2$-dimensional linear subset $Y$ of $X$, the fundamental vectors of $Y$ are ordered before all the other vectors, and
	\item For every index $i$ and for any $\alpha,\beta,\gamma$ in $X_i$, there is a full subset $F \subseteq X$ containing $\{\alpha,\beta,\gamma\}$ such that $F \cap X_i$ is clean.
\end{enumerate}

The following lemma will let us construct suitable orderings in \Cref{sec:SuitableStart} by reducing to the case of rank 3 root systems.
\begin{lemma}\label{thm:rank3existence}
	Let $\Phi$ be a root system and let $\alpha,\beta,\gamma\in \pos$. Then there is a full subsystem $F \subseteq \Phi$ which contains $\{\alpha,\beta,\gamma\}$ and has rank at most 3.
\end{lemma}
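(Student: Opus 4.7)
The plan is to take $F := \Phi \cap \Span\{\alpha, \beta, \gamma\}$, which is automatically a full subsystem of $\Phi$ containing the three given roots and has linear rank at most $3$. I will show this $F$ already has rank at most $3$ in all but one exceptional configuration, which can be handled by passing to a smaller full subsystem.

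Each irreducible component of $F$ is either finite (with rank equal to linear rank) or affine (with rank equal to linear rank in its own span). Two distinct irreducible affine components of $F$ are orthogonal, and since each component's span contains the primitive imaginary root $\delta$ of the enclosing affine part of $\Phi$, their spans must intersect in exactly $\Span\{\delta\}$: any vector in both spans lies in the radical of the form on either span, which is the $1$-dimensional subspace $\Span\{\delta\}$. A dimension count in the $3$-dimensional ambient space then yields $\mathrm{rank}(F) = \mathrm{linrank}(F) + (k-1)$ where $k$ is the number of affine components, forcing $k \leq 2$; moreover if $k = 2$ then both affine components must have linear rank $2$, i.e., be of type $\tA_1$. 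So the only configuration giving $\mathrm{rank}(F) > 3$ is $F \cong \tA_1 \times \tA_1$ of rank $4$.

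In this exceptional case, write $F = C_1 \cup C_2$ with $C_1, C_2$ the two mutually orthogonal $\tA_1$-components. By pigeonhole at least two of $\alpha, \beta, \gamma$ lie in one component, say $C_1$; if all three do, take $F' := C_1$ of rank $2$, otherwise the third root $\gamma$ lies in $C_2$ and I take $F' := C_1 \cup \{\pm\gamma\}$ of rank $3$. This $F'$ is closed under reflections because $C_1 \perp C_2$. For fullness, pair-spans within $C_1$ recover $C_1$ itself, the pair $\{\pm\gamma\}$ is trivial, and for a mixed pair $(\mu, \gamma)$ with $\mu \in C_1$ one writes $\mu = a\eta_1 + b\delta$ and $\gamma = a'\eta_2 + b'\delta$ with $a, a' \neq 0$ (since neither is a multiple of $\delta$) and $\eta_1, \eta_2, \delta$ linearly independent; solving $\delta = c_1 \mu + c_2 \gamma$ forces $c_1 = c_2 = 0$, a contradiction. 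Hence $\delta \notin \Span\{\mu, \gamma\}$, so $\Phi \cap \Span\{\mu, \gamma\}$ is a finite rank-$2$ subsystem equal to $\{\pm\mu, \pm\gamma\} \subseteq F'$.

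The main technical obstacle is the fullness verification of $F'$ in the exceptional $\tA_1 \times \tA_1$ case; the dimension count ruling out other bad configurations is essentially elementary. Reducing from a reducible ambient $\Phi$ (by restricting to the irreducible components in which the given roots live) and adapting the argument to twisted affine types (where $\delta$-shifts of roots may need rescaling depending on root length) require only minor modifications using the same pigeonhole strategy.
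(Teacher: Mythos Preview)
Your argument is essentially correct in the finite and (untwisted or twisted) affine settings, which is all the paper ultimately needs, but as written it does not prove the lemma in the generality in which it is stated. The sentence ``Each irreducible component of $F$ is either finite \dots\ or affine'' presupposes that $\Phi$ itself is finite or affine; for a general (say indefinite) root system $\Phi$ this need not hold, and your classification of the possible $F$ breaks down. Your closing remarks about reducible and twisted ambient $\Phi$ do not address this.

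The paper's proof avoids any such case analysis by a change of realization. It takes $F$ to be the \emph{minimal} full subsystem containing $\{\alpha,\beta,\gamma\}$, supposes for contradiction that $F$ has rank $r>3$, and then passes to a geometric realization of $F$ (where the fundamental roots of $F$ are linearly independent, so $\dim\Span(F)=r$). In that realization, $F' := F \cap \Span\{\alpha,\beta,\gamma\}$ is a full subsystem of $\Phi$ (full in $F$, hence full in $\Phi$) of dimension at most $3 < r$, hence a proper subset of $F$, contradicting minimality. The whole proof is three lines and uses nothing about the type of $\Phi$.

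The comparison: your approach is constructive and makes the exceptional $\tA_1 \times \tA_1$ configuration explicit---this is exactly the phenomenon the paper warns about in \S\ref{sec:subsystems}, and your direct handling of it via the pigeonhole/orthogonality argument is instructive. But you pay for this concreteness with a restriction on $\Phi$ and a fair amount of bookkeeping (verifying fullness of $F'$, the dimension count, the $\delta$-argument). The paper's trick of moving to the geometric realization converts ``rank'' to ``dimension'' in one stroke, sidestepping all the structure theory; it is worth internalizing as a general technique for arguments about ranks of subsystems.
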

\begin{proof}
	Let $F$ be the minimal full subsystem of $\Phi$ containing $\{\alpha,\beta,\gamma\}$. We will prove $F$ has rank at most 3. Indeed, assume not. Then $F$ has rank $r>3$. Hence a geometric realization $F \hookrightarrow V$ will have dimension $r$. If we take $F' \coloneqq F \cap \Span\{\alpha,\beta,\gamma\}$, then $F'$ is a linear subset of dimension at most 3. Hence $F'$ is a proper subset of $F$, which has dimension $r$. But $F'$ is a full subsystem of $\Phi$ and is strictly contained in $F$, contradicting the minimality of $F$.
\end{proof}

\begin{remark}
	If we were to allow $\Phi$ to include imaginary roots, then this lemma would fail to be true. For instance, in our running example of $\Phi=\tA_3$, if we considered the set $X=\pos\sqcup \NN\delta$ and took $\alpha = \alpha_0+\alpha_1$, $\beta = \alpha_1+\alpha_2$, and $\gamma=\alpha_2+\alpha_3$, then the minimal full subset of $X$ containing $\{\alpha,\beta,\gamma\}$ would be $\Lambda_+\sqcup \NN\delta$, where $\Lambda$ is the subsystem of type $\tA_1\times\tA_1$ from earlier examples. In particular, $\Lambda$ is a rank 4 root subsystem of $\Phi$.
\end{remark}

\section{Theorems on extending biclosed sets in the presence of a suitable ordering}
\label{sec:extend}
In this section, we will prove several major theorems about sets of vectors with suitable orderings. 
At this point in the paper, we have not presented any examples of sets of vectors with such orderings. We will turn to this issue in Sections~\ref{sec:FiniteSuitable},~\ref{sec:A2TildeSuitable} and~\ref{sec:Folding}. 
Sections~\ref{sec:FiniteSuitable} and~\ref{sec:A2TildeSuitable} can be read before this section, by the reader who would prefer to have examples first; 
the proofs in Section~\ref{sec:Folding} rely on the results in this section.

\begin{Theorem} \label{thm:KeyExtensionLemma}
	Let $X$ be a finite or countable subset of $V$ with a suitable ordering.
	Let $X_m$ be an initial section of the suitable ordering and let $U$ be co-closed in $X_m$. 
	Let $\overline{U}$ be the closure of $U$ in $X$. Then $\overline{U}$ is biclosed in $X$.
\end{Theorem}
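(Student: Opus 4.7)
The plan is to prove by induction on $n \ge m$ that $\overline{U} \cap X_n$ is biclosed in $X_n$; this suffices because any triple $(\alpha,\beta,\gamma)$ witnessing a failure of coclosedness of $\overline{U}$ in $X$ lies in some initial segment $X_n$. Closedness of $\overline{U} \cap X_n$ in $X_n$ is immediate from closedness of $\overline{U}$ in $X$, so all the real work is in coclosedness.

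For the base case $n = m$, I would suppose toward contradiction that there exist $a, b \in X_m \setminus \overline{U}$ and $c \in \Spanp(a, b) \cap X_m$ with $c \in \overline{U}$. Applying condition (2) of suitability to $\{a, b, c\}$ at index $m$ yields a full $F \subseteq X$ containing $\{a,b,c\}$ with $F \cap X_m$ clean. Fullness combined with coclosedness of $U$ in $X_m$ makes $U \cap F$ coclosed in $F \cap X_m$. I would then establish, as an auxiliary lemma, that in a clean finite set the closure of any coclosed subset is biclosed; this is proved via Farkas' lemma using the cleanliness equivalence biclosed $=$ weakly separable. Applied here, this gives biclosedness of the $(F \cap X_m)$-closure of $U \cap F$ in $F \cap X_m$. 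Using fullness together with condition (1) on the 2D subspaces that appear along closure derivations, one identifies this closure with $\overline{U} \cap F \cap X_m$, and coclosedness applied to the triple $(a, b, c)$ produces the desired contradiction.

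For the inductive step $n \to n+1$, assume $\overline{U} \cap X_n$ is biclosed in $X_n$. The interesting coclosedness cases involve triples containing $\gamma_{n+1}$. When $c = \gamma_{n+1}$, I would apply condition (2) at index $n+1$ to obtain a clean $F \cap X_{n+1}$; combined with the inductive biclosedness of $V := \overline{U} \cap F \cap X_n$ in $F \cap X_n$, cleanliness of $F \cap X_{n+1}$ forces a weak separability contradiction with the counterexample triple. When $a = \gamma_{n+1}$ (or $b = \gamma_{n+1}$), I would use condition (1): the fundamentals $\epsilon_1, \epsilon_2$ of $Y = X \cap \Span(a, b)$ must lie in $X_n$, since $c \in X_n$ cannot be fundamental in $Y$, and $\overline{U} \cap Y$ is an angular interval by closedness in the 2D subspace. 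Careful angular analysis of the positions of $a, b, c$ combined with the inductive hypothesis then yields the contradiction. The principal obstacle throughout is the base case, especially the reconciliation via fullness between $\overline{U} \cap X_m$ and the $(F \cap X_m)$-closure of $U \cap F$, since an $X$-derivation of a closure element may a priori pass through elements outside $F$; condition (1) applied to the 2D subspaces along such a derivation is what enables this reconciliation.
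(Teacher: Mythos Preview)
Your proposal has a genuine gap, rooted in the decision to work directly with $\overline{U} \cap X_n$ rather than building the closure step by step.

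The paper instead defines sets $V_i \subseteq X_i$ inductively from $V_0 = \emptyset$, putting $\gamma_i$ into $V_i$ iff either $\gamma_i \in U$ or $\gamma_i \in \Span_+(\zeta_1,\zeta_2)$ for some $\zeta_1,\zeta_2 \in V_{i-1}$. It proves each $V_i$ biclosed in $X_i$ by induction from $i=0$, and only at the end identifies $\bigcup V_i$ with $\overline{U}$. The crucial feature is that whenever $\gamma_i$ enters via closure, the witnesses $\zeta_1,\zeta_2$ automatically lie in $V_{i-1}\subseteq X_{i-1}$. One can then apply suitability at index $i-1$ to a triple already in $X_{i-1}$ (namely $\{\zeta_1,\zeta_2,\alpha\}$, where $\alpha$ comes from the potential coclosedness failure), obtaining a clean $F\cap X_{i-1}$ that by fullness also contains $\gamma_i$ and $\beta$; separability of $V_{i-1}\cap F$ in $F\cap X_{i-1}$ then yields the contradiction.

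In your inductive step with $c=\gamma_{n+1}\in\overline{U}$, you have no such witnesses in $X_n$: the derivation placing $c$ in $\overline{U}$ may pass through vectors of arbitrarily large index. Applying suitability at index $n+1$ gives a clean $F\cap X_{n+1}$ containing $a,b,c$, and you know $\overline{U}\cap F\cap X_n$ is biclosed in $F\cap X_n$; but to reach a contradiction you would need $c$ to be forced into any biclosed extension of $\overline{U}\cap F\cap X_n$ to $F\cap X_{n+1}$, and that requires closure witnesses for $c$ inside $F\cap X_n$, which you cannot produce. The same difficulty is what makes your base case hard: the ``reconciliation'' of $\overline{U}\cap F\cap X_m$ with the $(F\cap X_m)$-closure of $U\cap F$ is precisely the assertion that closure derivations can be confined to $F\cap X_m$, and condition~(1) alone does not give this---it says fundamentals of a 2D subset precede the non-fundamentals, not that arbitrary closure witnesses can be replaced by earlier ones. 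Your auxiliary lemma (that in a clean finite set the closure of any coclosed set is biclosed) is also left without proof; ``via Farkas'' is not an argument, and this statement is essentially the finite case of the theorem itself.
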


\begin{proof}
	We inductively define a sequence of sets $V_i \subseteq X_i$ as follows: We have $V_0 = X_0 = \emptyset$.  We put $V_i = V_{i-1} \cup \{ \gamma_i \}$ if either 
	\begin{enumerate}
		\item $\gamma_i$ is in $U$ \textbf{or}
		\item There are $\zeta_1$ and $\zeta_2 \in V_{i-1}$ with $\gamma_i \in \Span_+(\zeta_1, \zeta_2)$.
	\end{enumerate}
	Otherwise, we put $V_i = V_{i-1}$.
	
	It is clear that $V_i \supseteq X_i \cap U$ for all $i$.
	
	Our first task is to show, by induction on $i$, that $V_i$ is biclosed in $X_i$. The base case, $V_0 = X_0 = \emptyset$, is obvious.
	We now move to the inductive case:
	
	\textbf{Case 1:} $\gamma_i \in U$. In this case, since $U \subseteq X_m$, we must have $i \leq m$.
	
	\textbf{Verification that $V_i$ is closed in $X_i$:} The thing that could go wrong is that there could be some $\alpha$, $\beta \in X_{i-1}$ with $\beta \in \Span_+(\alpha, \gamma_i)$, $\alpha \in V_{i-1}$ and $\beta \not\in V_{i-1}$.  Suppose that this is the case.
	
	Let $\phi$ and $\psi$ be the fundamental vectors in the linear subset $\Span(\alpha, \gamma_i) \cap X$, with $\alpha$ closer to the $\phi$ end of the subspace, and $\gamma_i$ closer to the $\psi$ end. Since our ordering is suitable, $\phi$ and $\psi$ must be ordered before $\beta$ (which is not a fundamental vector), so $\phi$ and $\psi$ are in $X_{i-1}$. Our inductive hypothesis states that $V_{i-1}$ is biclosed in $X_{i-1}$, and we have assumed that $\alpha \in V_{i-1}$ and $\beta \not\in V_{i-1}$, so we must have $\phi \in V_{i-1}$ and $\psi \not\in V_{i-1}$. Since $V_{i-1} \supseteq X_{i-1} \cap U$, we deduce that $\beta$ and $\psi \not\in U$. But then $\{ \beta, \gamma_i, \psi \}$ violates the hypothesis that $U$ is coclosed in $X_m$.
	
	\textbf{Verification that $V_i$ is co-closed in $X_i$:} The thing that could go wrong is that there could be some $\alpha$, $\beta \in X_{i-1}$ with $\gamma_i \in \Span_+(\alpha, \beta)$ and $\alpha$, $\beta \not\in V_i$. But then $\alpha$, $\beta \not \in U$, so $\{ \alpha, \gamma_i, \beta \}$ violates the hypothesis that $U$ is co-closed in $X_m$. 
	
	\textbf{Case 2:} $\gamma_i$ is not in $U$ but there are $\zeta_1$ and $\zeta_2 \in V_{i-1}$ with $\gamma_i \in \Span_+(\zeta_1, \zeta_2)$.
	
	\textbf{Verification that $V_i$ is closed in $X_i$:}  The thing that could go wrong is that there could be some $\alpha$, $\beta \in X_{i-1}$ with $\beta \in \Span_+(\alpha, \gamma_i)$, $\alpha \in V_{i-1}$ and $\beta \not\in V_{i-1}$.  Suppose that this is the case.
	
	Let $F$ be the full subset containing $\{ \alpha, \zeta_1, \zeta_2\}$ such that $F \cap X_{i-1}$ is clean. 
	Since $F$ is full, $\gamma_i$ and $\beta$ are also in $F$.  
	But we know by induction that $V_{i-1}$ is biclosed in $X_{i-1}$, so $V_{i-1} \cap F$ is biclosed in $X_{i-1} \cap F$. 
	But the hypothesis of suitability then says that $V_{i-1} \cap F$ should be separable in $X_{i-1} \cap F$, and this violates that $\alpha$, $\zeta_1$, $\zeta_2 \in V_{i-1} \cap F$ and $\beta \in (X_{i-1} \cap F) \setminus V_{i-1}$.
	
	\textbf{Verification that $V_i$ is co-closed in $X_i$:} The thing that could go wrong is that there could be some $\alpha$, $\beta \in X_{i-1}$ with $\gamma_i \in \Span_+(\alpha, \beta)$ and $\alpha$, $\beta \not\in V_i$. Suppose that this is the case.
	
	Let $F$ be the full subset containing $\{ \zeta_1, \gamma_i, \alpha \}$ such that $F \cap X_{i-1}$ is clean. Since $F$ is full, $\zeta_2$ and $\beta$ are also in $F$. 
	But we know by induction that $V_{i-1}$ is biclosed in $X_{i-1}$, so $V_{i-1} \cap F$ is biclosed in $X_{i-1} \cap F$. 
	But the hypothesis of suitability then says that $V_{i-1} \cap F$ should be separable in $X_{i-1} \cap F$, and this violates that  $\zeta_1$, $\zeta_2 \in V_{i-1} \cap F$ and $\alpha$, $\beta \in (X_{i-1} \cap F) \setminus V_{i-1}$.
	
	\textbf{Case 3:} $V_i = V_{i-1}$. 
	
	\textbf{Verification that $V_i$ is closed in $X_i$:}  The thing that could go wrong is that there could be $\zeta_1$, $\zeta_2 \in V_{i-1}$ and $\gamma_i \in \Span_+(\zeta_1, \zeta_2)$. But then we would have $\gamma_i \in V_i$ after all.
	
	\textbf{Verification that $V_i$ is co-closed in $X_i$:} The thing that could go wrong is that there could be $\alpha$, $\beta \in X_{i-1}$ with $\beta \in \Span_+(\alpha, \gamma_i)$, with $\beta \in V_{i-1}$ and $\alpha \not\in V_{i-1}$.
	
	Let $\phi$ and $\psi$ be the fundamental vectors in the linear subset $\Span(\alpha, \gamma_i) \cap X$, with $\alpha$ closer to the $\phi$ end of the subspace, and $\gamma_i$ closer to the $\psi$ end. Since our ordering is suitable, $\phi$ and $\psi$ must be ordered before $\beta$ (which is not a fundamental vector), so $\phi$ and $\psi$ are in $X_{i-1}$.
	Our inductive hypothesis states that $V_{i-1}$ is biclosed in $X_{i-1}$, and we have assumed that $\alpha \not\in V_{i-1}$ and $\beta \in V_{i-1}$. So we must have $\phi \not\in V_{i-1}$ and $\psi \in V_{i-1}$. But then $\gamma_i \in \Span_+(\beta, \psi)$, and we have seen that $\beta$, $\psi \in V_{i-1}$, in which case we would have $\gamma_i \in V_i$ after all.
	
	This concludes the inductive verification that $V_{i}$ is biclosed in $X_{i}$. Therefore, $\bigcup V_i$ is biclosed in $\bigcup X_i = X$. We will therefore be done if we can show that $\bigcup V_i$ is the closure of $U$ in $X$. 
	
	Every time that we put the vector $\gamma_i$ into $V_i$, it is either because $\gamma_i \in U$ or because $\gamma_i$ is in the positive span of two vectors in $V_{i-1}$. So, inductively, we have $V_i \subset \overline{U}$ for all $i$, and thus $\bigcup V_i \subseteq \overline{U}$. 
	But we also have proved that $\bigcup V_i$ is closed in $X$, and clearly $U \subseteq \bigcup V_i$, so we must have $\overline{U} \subseteq \bigcup V_i$. 
	This shows that $\bigcup V_i = \overline{U}$, and concludes the proof.
\end{proof}

We now pursue variants of Theorem~\ref{thm:KeyExtensionLemma}.

\begin{prop} \label{prop:BiclosedExtend}
	With notation as in Theorem~\ref{thm:KeyExtensionLemma}, assume that $U$ is not only co-closed in $X_m$, but that $U$ is biclosed in $X_m$. Then $\overline{U} \cap X_m = U$.
\end{prop}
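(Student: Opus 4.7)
The plan is to re-use the sequence $V_0 \subseteq V_1 \subseteq \cdots$ of subsets $V_i \subseteq X_i$ constructed in the proof of Theorem~\ref{thm:KeyExtensionLemma}, which satisfies $\bigcup_i V_i = \overline{U}$. The inclusion $U \subseteq \overline{U} \cap X_m$ is automatic; the content of the proposition is the reverse inclusion, and this is where the additional hypothesis that $U$ is \emph{closed} in $X_m$ (and not just coclosed) is used.

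First I would check that $\overline{U} \cap X_m = V_m$. By construction $V_j \setminus V_{j-1} \subseteq \{\gamma_j\}$, so $V_j \cap X_{j-1} = V_{j-1}$. Iterating this observation downward for $j > m$ gives $V_j \cap X_m = V_m$ for all $j \geq m$, while $V_j \subseteq X_j \subseteq X_m$ for $j \leq m$. Taking the union over all $j$ yields $\overline{U} \cap X_m = V_m$.

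Next I would show by induction on $i \leq m$ that $V_i = U \cap X_i$. The base case $V_0 = \emptyset = U \cap X_0$ is clear. For the inductive step, if $\gamma_i \in U$ then $\gamma_i$ is added to both sides, preserving the equality. If $\gamma_i \notin U$, then $U \cap X_i = U \cap X_{i-1} = V_{i-1}$ by the induction hypothesis, so it suffices to show $V_i = V_{i-1}$. The only way $\gamma_i$ could have been added to $V_i$ in this case is if $\gamma_i \in \Span_+(\zeta_1, \zeta_2)$ for some $\zeta_1, \zeta_2 \in V_{i-1}$; but then $\zeta_1, \zeta_2 \in U$ by induction, and since $U$ is closed in $X_m$ and $\gamma_i \in X_i \subseteq X_m$, we would conclude $\gamma_i \in U$, contradicting our assumption.

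Combining these two steps gives $\overline{U} \cap X_m = V_m = U \cap X_m = U$, as desired. I do not expect a significant obstacle here: all of the difficult structural work was already carried out in the proof of Theorem~\ref{thm:KeyExtensionLemma}, and the present proposition merely leverages closedness of $U$ in $X_m$ to rule out Case 2 of the construction of $V_i$ at stages $i \leq m$.
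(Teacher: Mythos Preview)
Your proof is correct and follows essentially the same approach as the paper's. Both arguments lean on the explicit construction $\overline{U} = \bigcup_i V_i$ from Theorem~\ref{thm:KeyExtensionLemma}: the paper phrases the key step as a minimal-counterexample argument (take the least $i$ with $\gamma_i \in \overline{U}\setminus U$, deduce $\gamma_i \in \Span_+(\zeta_1,\zeta_2)$ with $\zeta_1,\zeta_2 \in \overline{U}\cap X_{i-1}$, and use minimality to place $\zeta_1,\zeta_2$ in $U$), whereas you do the logically equivalent forward induction $V_i = U \cap X_i$; your extra bookkeeping in step~1 just makes explicit what the paper's argument uses implicitly.
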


\begin{proof}
	Clearly, $U \subseteq \overline{U}$.
	We need to show, for $1 \leq i \leq m$, that, if the vector $\gamma_i$ is in $\overline{U}$, then $\gamma_i \in U$. 
	Suppose that this is not true and let $i$ be the least index for which $\gamma_i \in \overline{U} \setminus U$. Then we must have $\gamma_i \in \Span_+(\zeta_1, \zeta_2)$ for some $\zeta_1$, $\zeta_2$ in $\overline{U} \cap X_{i-1}$. So $\zeta_1$ and $\zeta_2$ are $\gamma_a$ and $\gamma_b$ for some $a$, $b < i$. Using the minimality of $i$, we have $\gamma_a$ and $\gamma_b \in U$. But, since $U$ is closed in $X_m$, this implies that $\gamma_i$ is in $U$ as well.
\end{proof}

\begin{prop} \label{prop:ClosureOfCoclosed}
	Let $X$ be a finite or countable subset of $V$ with a suitable ordering.
	Let $U$ be co-closed in $X$, and let $\overline{U}$ be the closure of $U$ in $X$. Then $\overline{U}$ is biclosed in $X$.
\end{prop}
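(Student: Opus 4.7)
The plan is to bootstrap Theorem~\ref{thm:KeyExtensionLemma} from initial segments to the whole of $X$ by a compactness/monotonicity argument. Since $\overline{U}$ is a closure, it is automatically closed in $X$, so the only content is to show $\overline{U}$ is co-closed in $X$.

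First I would reduce to already-proved cases by setting $U_m \coloneqq U \cap X_m$ and checking that $U_m$ is co-closed in $X_m$: if $\alpha,\beta \in X_m \setminus U_m$ and $\gamma \in \Span_+(\alpha,\beta) \cap X_m$, then $\alpha,\beta \in X \setminus U$, so co-closedness of $U$ in $X$ gives $\gamma \in X \setminus U$, and together with $\gamma \in X_m$ this puts $\gamma$ in $X_m \setminus U_m$. Having verified this, Theorem~\ref{thm:KeyExtensionLemma} applies: letting $\overline{U_m}$ denote the closure of $U_m$ in $X$, each $\overline{U_m}$ is biclosed in $X$.

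Next I would show that $\overline{U} = \bigcup_m \overline{U_m}$. The inclusion $\bigcup_m \overline{U_m} \subseteq \overline{U}$ is immediate from $U_m \subseteq U$. For the reverse inclusion, the chain $U_1 \subseteq U_2 \subseteq \cdots$ yields a chain $\overline{U_1} \subseteq \overline{U_2} \subseteq \cdots$, so given any two elements of $\bigcup_m \overline{U_m}$ they both lie in some common $\overline{U_m}$; since $\overline{U_m}$ is closed in $X$, any positive combination of them lying in $X$ is again in $\overline{U_m}$. Thus $\bigcup_m \overline{U_m}$ is a closed subset of $X$ containing $U$, and hence contains $\overline{U}$.

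The last step, and really the point of the whole argument, is co-closedness of $\overline{U}$ in $X$. Suppose $\alpha,\beta \in X \setminus \overline{U}$ and $\gamma \in \Span_+(\alpha,\beta) \cap X$. Then $\alpha,\beta \notin \overline{U_m}$ for every $m$. If we had $\gamma \in \overline{U}$, then $\gamma \in \overline{U_m}$ for some particular $m$; but $\overline{U_m}$ is biclosed in $X$, so in particular co-closed in $X$, and then $\alpha,\beta \notin \overline{U_m}$ forces $\gamma \notin \overline{U_m}$, a contradiction. Hence $\gamma \notin \overline{U}$, completing the proof. There is no serious obstacle here; the only subtlety is keeping straight that $\overline{U_m}$ denotes the closure of $U_m$ inside the full ambient set $X$ rather than inside $X_m$, so that Theorem~\ref{thm:KeyExtensionLemma} delivers biclosedness with respect to $X$ and not merely with respect to $X_m$.
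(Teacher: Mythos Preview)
Your proof is correct, but it takes a different route from the paper's. The paper simply says to rerun the inductive construction of Theorem~\ref{thm:KeyExtensionLemma} verbatim with $m=\infty$: the sets $V_i$ are built exactly as before, and the only place the bound $m$ was used in that proof was to invoke co-closedness of $U$ in $X_m$, which now holds in all of $X$ by hypothesis. So the paper's argument is ``repeat the proof'', whereas yours invokes Theorem~\ref{thm:KeyExtensionLemma} as a black box on each truncation $U_m=U\cap X_m$, and then passes to the limit via the chain $\overline{U_1}\subseteq\overline{U_2}\subseteq\cdots$ of biclosed sets, using that an increasing union of closed sets is closed and that co-closedness is detected by any single $\overline{U_m}$ containing a putative bad $\gamma$.

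Your approach has the virtue of being modular: it treats Theorem~\ref{thm:KeyExtensionLemma} as a finished lemma rather than reopening its internals, and it makes transparent why the infinite case follows from the finite ones. The paper's approach is terser to state and avoids the small bookkeeping of identifying $\overline{U}$ with $\bigcup_m\overline{U_m}$, but demands that the reader mentally re-execute the case analysis. Either is fine.
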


\begin{proof}
	The proof follows exactly as in Theorem~\ref{thm:KeyExtensionLemma}, simply taking $m = \infty$.
\end{proof}

We record the corresponding dual statement.
\begin{prop} \label{prop:InteriorOfClosed}
	Let $X$ be a finite or countable subset of $V$ with a suitable ordering.
	Let $K$ be closed in $X$ and let $K^{\circ}$ be the interior of $K$ in $X$. Then $K^{\circ}$ is biclosed in $X$.
\end{prop}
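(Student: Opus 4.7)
The plan is to deduce this directly from \Cref{prop:ClosureOfCoclosed} by complementation, exploiting the fact that the notion of biclosed is symmetric in $X$: a subset $B \subseteq X$ is biclosed in $X$ if and only if $X \setminus B$ is biclosed in $X$, since the closed and coclosed conditions simply swap roles under complementation.

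First, I would unpack the meaning of the interior $K^{\circ}$. By analogy with the topological case, $K^{\circ}$ should be the largest coclosed-in-$X$ subset of $K$. Equivalently, $X \setminus K^{\circ}$ should be the smallest closed-in-$X$ subset of $X$ containing $X \setminus K$, so that
\[ K^{\circ} = X \setminus \overline{X \setminus K}, \]
where $\overline{\,\cdot\,}$ denotes closure in $X$. (If the paper's convention for $K^{\circ}$ is something different, this identity would need to be checked as a brief preliminary lemma, but with the natural definition as the largest coclosed subset of $K$ it is immediate.)

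Next, set $U := X \setminus K$. Because $K$ is closed in $X$, its complement $U$ is coclosed in $X$, so $U$ satisfies the hypothesis of \Cref{prop:ClosureOfCoclosed}. Applying that proposition, the closure $\overline{U}$ is biclosed in $X$. Then $K^{\circ} = X \setminus \overline{U}$ is the complement in $X$ of a biclosed set, hence biclosed in $X$, as desired.

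There is no real obstacle here; the entire argument is a one-step reduction to the already-proved \Cref{prop:ClosureOfCoclosed} via the trivial observation that complementation in $X$ is an involution on the collection of biclosed subsets of $X$. The only substantive point is getting the definition of $K^{\circ}$ stated so that the complement identity $K^{\circ} = X \setminus \overline{X \setminus K}$ is transparent; everything else follows formally.
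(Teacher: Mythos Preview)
Your proposal is correct and is exactly the paper's argument: set $U = X \setminus K$, note $U$ is coclosed since $K$ is closed, apply \Cref{prop:ClosureOfCoclosed} to get $\overline{U}$ biclosed, and conclude that $K^{\circ} = X \setminus \overline{U}$ is biclosed. The paper's proof is the one-line ``Take $K = X \setminus U$ and apply Proposition~\ref{prop:ClosureOfCoclosed},'' so you have simply written out the same duality step in more detail.
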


\begin{proof}
	Take $K = X \setminus U$ and apply Proposition~\ref{prop:ClosureOfCoclosed}.
\end{proof}

We note one more result that will be used to prove Dyer's Conjecture A.
\begin{lemma} \label{thm:ConjectureAFinite}
	Let $X$ be a finite or countable subset of $V$ with a suitable ordering.
	Let $X_m$ be a finite initial section of the suitable ordering and let $B_1\subset B_2$ be biclosed sets in $X_m$. If $|B_2\setminus B_1| \geq 2$, then there is a biclosed set $C$ in $X_m$ with $B_1\subsetneq C \subsetneq B_2$.
\end{lemma}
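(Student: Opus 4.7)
The plan is to proceed by induction on $N := |B_2 \setminus B_1|$, with base case $N = 2$. For the base case, write $B_2 = B_1 \sqcup \{\alpha, \beta\}$ and observe that $B_1 \cup \{\alpha\}$ is biclosed in $X_m$ if and only if (I) $\beta \notin \Spanp(\alpha, \nu)$ for every $\nu \in B_1$ and (II) $\alpha \notin \Spanp(\beta, \xi)$ for every $\xi \in X_m \setminus B_2$; symmetric conditions (I$'$), (II$'$) govern $B_1 \cup \{\beta\}$. Suppose for contradiction that neither is biclosed; this yields four sub-cases, according to which of (I), (II) fails on the $\alpha$ side and which of (I$'$), (II$'$) fails on the $\beta$ side. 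In each sub-case, substituting the two failing positive-combination equations and eliminating $\alpha$ or $\beta$ produces one of the following: a positive combination of positive vectors equal to zero; an element of $B_1$ expressed as a positive combination of two vectors in $X_m \setminus B_1$ (contradicting coclosedness of $B_1$); an element of $X_m \setminus B_2$ expressed as a positive combination of two elements of $B_2$ (contradicting closedness of $B_2$); or a degenerate scalar relation $\beta = c\alpha$ ruled out by the non-collinearity of distinct positive roots. This step uses only biclosedness of $B_1$ and $B_2$ and does not invoke the suitable ordering.

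For the inductive step $N \geq 3$, the plan is to find $\alpha \in B_2 \setminus B_1$ such that $B_1 \cup \{\alpha\}$ is biclosed in $X_m$; it is then an intermediate biclosed set, as $B_1 \subsetneq B_1 \cup \{\alpha\} \subsetneq B_2$. Biclosedness of $B_1 \cup \{\alpha\}$ is characterized by: (i) no $\mu \in B_2 \setminus B_1 \setminus \{\alpha\}$ and $\nu \in B_1$ satisfy $\mu \in \Spanp(\alpha, \nu)$, and (ii) $\alpha$ is not in the positive span of two elements of $X_m \setminus (B_1 \cup \{\alpha\})$. I would choose $\alpha$ to be minimal in the relation $\gamma \prec \gamma'$ on $B_2 \setminus B_1$ defined by $\gamma \in \Spanp(\gamma', \nu)$ for some $\nu \in B_1$. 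The base-case substitution argument shows $\prec$ is antisymmetric, and higher-order acyclicity (hence the existence of a minimal element in the finite set $B_2 \setminus B_1$) follows from the rank-$3$ cleanliness of the suitable ordering, which converts iterated rank-$2$ closure in $B_1$ to genuine positive-span closure within relevant rank-$3$ subsystems. Condition (i) is then immediate from minimality of $\alpha$; condition (ii) is verified by passing to the rank-$3$ full subset $F$ supplied by condition (2) of the suitable ordering and using cleanliness to extract a separating linear functional that rules out the forbidden configuration.

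The main obstacle is verifying condition (ii) in the inductive step, specifically handling the "sandwiching" configurations in which $\alpha$ would lie in $\Spanp(\mu_1, \mu_2)$ with $\mu_1$ on one tail of $B_2 \setminus B_1$ in a two-dimensional linear subset $Y$ and $\mu_2$ on the opposite side (either the other tail or in $X_m \setminus B_2$). Minimality of $\alpha$ in $\prec$ directly handles the case where $\mu_1, \mu_2$ and $\alpha$ all lie on the same side of $B_1$ in $Y$, but not the opposite-side case; here one must pass to the rank-$3$ full subset $F$ containing the sandwiching triple. The cleanliness of $F \cap X_m$ makes the biclosed restrictions $B_1 \cap F, B_2 \cap F$ separable, and the induced linear functional, when evaluated on $\alpha$ and on the putative $\mu_1, \mu_2$, produces a sign conflict contradicting $\alpha \in B_2 \setminus B_1$. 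Overcoming this obstacle is where the full force of the suitable ordering is required, completing the induction and establishing the lemma.
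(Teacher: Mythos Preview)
Your base case ($N = 2$) is correct, and it is a nice observation that it uses only the biclosedness of $B_1$ and $B_2$, not the suitable ordering.

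However, your inductive step has a genuine gap: $\prec$-minimality of $\alpha$ does not guarantee condition~(ii). Take $X_m$ to be the three positive roots of $A_2$ (in any suitable order), $B_1 = \emptyset$, and $B_2 = \{\alpha_1, \alpha_2, \alpha_1+\alpha_2\}$, so $N = 3$. Since $B_1 = \emptyset$, your relation $\prec$ is vacuous and every element of $B_2 \setminus B_1$ is $\prec$-minimal. In particular $\alpha = \alpha_1 + \alpha_2$ is $\prec$-minimal, yet $B_1 \cup \{\alpha\} = \{\alpha_1+\alpha_2\}$ is not coclosed, since $\alpha_1 + \alpha_2 \in \Spanp(\alpha_1, \alpha_2)$ with $\alpha_1, \alpha_2 \in X_m \setminus (B_1 \cup \{\alpha\})$. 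Your proposed ``sign conflict'' via separating functionals for $B_1 \cap F$ and $B_2 \cap F$ yields nothing here: both functionals take a constant sign on all of $\alpha, \alpha_1, \alpha_2$, perfectly compatible with $\alpha \in \Spanp(\alpha_1, \alpha_2)$. The relation $\prec$ encodes only obstruction~(i); it is blind to the obstruction in~(ii) arising from pairs inside $B_2 \setminus B_1$ (Case~B) or mixed pairs (Case~C).

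The paper avoids this by inducting on $m$ rather than on $N$. The inductive hypothesis on $X_{m-1}$ forces any putative cover $B_1 \lessdot B_2$ in $X_m$ with $|B_2\setminus B_1| \ge 2$ to satisfy $B_2 \setminus B_1 = \{\alpha, \gamma_m\}$, where $\gamma_m$ is the \emph{last} vector in the suitable order. This is the crucial structural input: it reduces to exactly two candidates for $\alpha_C$, and the choice between them is dictated by the rank-$2$ subset $\Span(\alpha,\gamma_m)\cap X_m$. Cleanliness is then invoked on a full set $F$ containing $\alpha$, $\gamma_m$, and a single potential violator $\beta$, and the fact that $(B_2\cap F)\setminus(B_1\cap F) = \{\alpha,\gamma_m\}$ pins down the separable sets of $F\cap X_m$ between $B_1\cap F$ and $B_2\cap F$ tightly enough to verify biclosedness of $C$. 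Your induction on $N$ discards this ``one of the two new elements is maximal in the order'' information, and there is no evident way to recover it from $\prec$-minimality alone.
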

\begin{proof}
	We will show by induction on $m$ that if $B_1\lessdot B_2$ is a cover relation in the poset of biclosed sets in $X_m$, then $|B_2\setminus B_1|= 1$. Assume this is true for $X_i$ with $i<m$, and assume for the sake of contradiction that $B_1\lessdot B_2$ is a cover relation of biclosed sets in $X_m$ with $|B_2\setminus B_1| \geq 2$. By induction we know the restrictions $B_1\cap X_{m-1}$ and $B_2\cap X_{m-1}$ satisfy $|(B_2\cap X_{m-1}) \setminus (B_1\cap X_{m-1})| \leq 1$, so it follows that there is a unique root $\alpha \in X_{m-1}$ such that $B_2 = B_1\sqcup \{\alpha,\gamma_m\}$. We will derive a contradiction by showing that there is a biclosed set $C$ strictly between $B_1$ and $B_2$. To decide whether $C = B_1\cup \{\alpha\}$ or $C=B_1\cup\{\gamma_m\}$, examine the 2-dimensional linear subset $Y=\Span(\alpha,\gamma_m)\cap X_m$. If $|Y| = 2$, then we set $C\coloneqq B_1\cup\{\alpha\}$. Otherwise, $\gamma_m$ is not a fundamental vector of $Y$, since at least $2$ vectors in $Y$ precede $\gamma_m$ in a suitable order. In this case, there is a unique choice among the two options for $C$ such that $C\cap Y$ is biclosed in $Y$, and this is the one we pick. 
	
	Let $\alpha_C$ be either $\alpha$ or $\gamma_m$, so that $C=B_1\cup\{\alpha_C\}$. We claim that $C$ is biclosed in $X_m$. If not, then either there is an element $\beta\in C$ and a $\gamma \in \Spanp(\alpha_C, \beta)\cap X_m$ such that $\gamma\not\in C$, or else there are elements $\beta,\gamma \in X_m\setminus C$ so that $\alpha_C\in \Spanp(\beta,\gamma)$. In either case, let $F$ be a full, clean, subset of $X_m$ containing $\beta, \alpha, \gamma_m$. Write $B_1'=B_1\cap F$ and $B_2'=B_2\cap F$. Then $B_1'$ and $B_2'$ are separable, and $B_2'= B_1'\sqcup \{\alpha,\gamma_m\}$. Fullness of $F$ implies that $Y\subseteq F$. If $|Y|>2$, then there is a unique separable set $C'$ in $F$ which is strictly between $B_1'$ and $B_2'$. Since $C'$ must restrict to a biclosed set in $Y$, it follows that $C' = C\cap F$, contradicting the hypothesis that $C\cap F$ is not biclosed. 
	
	Otherwise, $|Y|=2$. We claim that in this case, both $B_1'\cup\{\alpha\}$ and $B_1'\cup\{\gamma_m\}$ are separable in $F$, contradicting the hypothesis that $C\cap F$ is not biclosed. To see this, note that at least one of $B_1'\cup\{\alpha\}$ and $B_1'\cup\{\gamma_m\}$ is separable; without loss of generality, $B_0\coloneqq B_1'\cup\{\alpha\}$ is separable. We will show $B_1'\cup \{\gamma_m\}$ is biclosed (and hence separable); note that it is enough to check this on two dimensional subsets of $F$. Consider any two dimensional subset $Z$ of $F$. If $Z\cap \{\alpha,\gamma_m\}=\varnothing$, then $Z\cap (B_1'\cup \{\gamma_m\})=Z\cap B_0$ so is biclosed in $Z$. If $Z\cap \{\alpha,\gamma_m\} = \{\alpha\}$, then $Z\cap (B_1'\cup \{\gamma_m\}) = Z\cap B_1'$ so is biclosed in $Z$. If $Z\cap \{\alpha,\gamma_m\} = \{\gamma_m\}$, then $Z\cap (B_1'\cup \{\gamma_m\}) = Z\cap B_0$ so is biclosed in $Z$. Finally, if $\{\alpha,\gamma_m\}\subseteq Z$, then $Z = \{\alpha,\gamma_m\}$ and any subset of $Z$ is biclosed in $Z$.
\end{proof}

\section{Suitable orderings imply Dyer's conjectures}
\label{sec:biclosedlattice}

Propositions~\ref{prop:ClosureOfCoclosed} and~\ref{prop:InteriorOfClosed} imply that, if $X$ has a suitable ordering, then the biclosed sets of $X$ form a lattice.
In this section, we state this result more precisely, and recall the proof. We also indicate how the results of the previous section imply Dyer's Conjecture A.

\begin{theorem} \label{thm:BiclosedLattice}
	Let $X$ be a finite or countable subset of $V$ with a suitable ordering.
	Then the biclosed subsets of $X$ form a complete lattice with respect to containment.
	More specifically, if $\cX$ is any collection of biclosed subsets of $X$, then we have the following formulas for meet and join:
	\[ \bigjoin_{X \in \cX} X = \overline{\bigcup_{X \in \cX} X} \qquad \bigmeet_{X \in \cX} X = \left(\bigcap_{X \in \cX} X \right)^{\circ} . \]
\end{theorem}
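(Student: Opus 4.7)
The plan is to verify the two displayed formulas directly, relying on Propositions~\ref{prop:ClosureOfCoclosed} and~\ref{prop:InteriorOfClosed} to supply biclosedness. Fix a collection $\cX$ of biclosed subsets of $X$. First I would set $U = \bigcup_{Y\in\cX} Y$ and check that $U$ is co-closed in $X$: if $\alpha,\beta\in X\setminus U$ and $\gamma\in\Spanp(\alpha,\beta)\cap X$, then $\alpha,\beta\not\in Y$ for every $Y\in\cX$, and the co-closedness of each $Y$ forces $\gamma\not\in Y$, hence $\gamma\not\in U$. Proposition~\ref{prop:ClosureOfCoclosed} then produces a biclosed set $\overline{U}$.

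To argue that $\overline{U}$ is actually the join, note that $Y\subseteq U\subseteq \overline{U}$ for every $Y\in\cX$, so $\overline{U}$ is an upper bound. Conversely, if $B$ is any biclosed set containing every $Y\in\cX$, then $U\subseteq B$, and since $B$ is closed in $X$ we conclude $\overline{U}\subseteq B$. So $\overline{U}$ has the least-upper-bound property.

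The meet formula is dual. I would consider $K = \bigcap_{Y\in\cX} Y$ and observe that $K$ is closed in $X$: if $\alpha,\beta\in K$ and $\gamma\in\Spanp(\alpha,\beta)\cap X$, then $\gamma\in Y$ for every $Y$ by closedness, so $\gamma\in K$. Proposition~\ref{prop:InteriorOfClosed} then makes $K^\circ$ biclosed. Since $K^\circ\subseteq K\subseteq Y$ for every $Y\in\cX$, this is a lower bound. Any biclosed $B$ with $B\subseteq Y$ for all $Y$ satisfies $B\subseteq K$; since $B$ is co-closed, $X\setminus B$ is a closed set containing $X\setminus K$, hence contains the closure $\overline{X\setminus K}$, so $B\subseteq X\setminus \overline{X\setminus K} = K^\circ$.

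Applied to $\cX = \emptyset$, the formulas recover the top element $X$ and bottom element $\emptyset$, so the biclosed subsets form a complete lattice under containment. There is no essential obstacle at this stage: the suitability hypothesis and all of the inductive work have already been absorbed into Theorem~\ref{thm:KeyExtensionLemma} and the two propositions derived from it; the present theorem is a routine assembly of those pieces with the standard duality between closure and interior.
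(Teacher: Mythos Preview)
Your proof is correct and follows essentially the same route as the paper: set $U=\bigcup_{Y\in\cX}Y$, observe it is co-closed, invoke Proposition~\ref{prop:ClosureOfCoclosed} to make $\overline{U}$ biclosed, and then use closedness of any upper bound to show $\overline{U}$ is least; the meet is handled dually via Proposition~\ref{prop:InteriorOfClosed}. The paper only writes out the join case and leaves the meet as ``similar'', whereas you spell out both and also note the $\cX=\emptyset$ case, but the underlying argument is identical.
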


\begin{proof}
	We prove that $\overline{\bigcup_{X \in \cX} X}$ is the join of $\cX$; the statement about meets is similar.
	We first need to know that $\overline{\bigcup_{X \in \cX} X}$  is biclosed in the first place!
	
	Set $U := \bigcup_{X \in \cX} X$. Since each $X$ in the union is co-closed, the union is also co-closed. By Proposition~\ref{prop:ClosureOfCoclosed}, we conclude that $\overline{U}$ is biclosed.
	
	Now, let $B$ be any biclosed set containing all of the $X$ in $\cX$. Then $B$ contains the union $U$. Since $B$ is closed, we have $B \supseteq \overline{U}$. Thus, we have shown that $\overline{U}$ is the least upper bound for $\cX$.
\end{proof}

\begin{theorem} \label{thm:BiclosedExtend}
	Let $X$ be a finite or countable subset of $V$ with a suitable ordering and let $X_m$ be an initial section of that suitable ordering.
	Let $U$ be biclosed in $X_m$. Then $\overline{U}$ is biclosed in $X$, and $\overline{U} \cap X_m = U$.
\end{theorem}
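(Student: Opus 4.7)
The statement is essentially a restatement of two results already proven earlier, so my plan is to simply assemble them.

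First, since $U$ is biclosed in $X_m$, in particular $U$ is co-closed in $X_m$. Therefore Theorem~\ref{thm:KeyExtensionLemma} applies directly and gives that $\overline{U}$, the closure of $U$ in $X$, is biclosed in $X$. This handles the first half of the conclusion.

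Second, to conclude $\overline{U} \cap X_m = U$, I would invoke Proposition~\ref{prop:BiclosedExtend}. That proposition is stated exactly in the setting of Theorem~\ref{thm:KeyExtensionLemma} under the additional hypothesis that $U$ is biclosed (not merely co-closed) in $X_m$, which is precisely what we have assumed here. It yields $\overline{U} \cap X_m = U$.

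So the proof is a one-liner: apply Theorem~\ref{thm:KeyExtensionLemma} and then Proposition~\ref{prop:BiclosedExtend}. There is no real obstacle, since the substantive work (the inductive construction of the sets $V_i$ and the verification that closure preserves the biclosed property in the presence of a suitable ordering) has already been carried out in the proofs of those two results. The only thing to note is that the containment $U \subseteq \overline{U} \cap X_m$ is trivial, while the reverse containment is exactly the content of Proposition~\ref{prop:BiclosedExtend}, which in turn relies on the closedness half of the biclosed hypothesis on $U$ (used to run the minimal-index argument on any hypothetical $\gamma_i \in \overline{U} \setminus U$ with $i \leq m$).
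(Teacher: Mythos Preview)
Your proposal is correct and matches the paper's own proof, which simply says ``This follows immediately from Proposition~\ref{prop:BiclosedExtend}.'' You have made explicit the (implicit) invocation of Theorem~\ref{thm:KeyExtensionLemma} for the first conclusion, but otherwise the argument is identical.
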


\begin{proof}
	This follows immediately from Proposition~\ref{prop:BiclosedExtend}.
\end{proof}

\begin{theorem}[Dyer's ``Conjecture A'']\label{thm:ConjectureA}
	Let $X$ be a finite or countable subset of $V$ with a suitable ordering. Let $\cC=\{B_i\}_{i\in I}$ be a maximal chain in the poset of biclosed sets: $\cC$ is totally ordered by containment of biclosed sets and is a maximal family with this property. Then for any $\alpha, \beta \in X$, there is a $B_{i}$ in $\cC$ such that $|\{\alpha,\beta\}\cap B_{i}|=1$.
\end{theorem}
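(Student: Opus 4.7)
The plan is to argue by contradiction. Suppose no $B_i \in \cC$ contains exactly one of $\alpha$ and $\beta$. By maximality, $\emptyset$ and $X$ both lie in $\cC$ (they are biclosed and comparable with every biclosed set), so every $B \in \cC$ contains both of $\alpha, \beta$ or neither. Define $B^- := \bigcup \{B \in \cC : \alpha, \beta \notin B\}$ and $B^+ := \bigcap \{B \in \cC : \alpha, \beta \in B\}$. Since unions and intersections of chains of biclosed sets are biclosed, both $B^{\pm}$ are biclosed; their comparability with every element of $\cC$ then puts $B^\pm \in \cC$ by maximality. One has $B^- \subsetneq B^+$ with $\alpha, \beta \in B^+ \setminus B^-$, and any biclosed set strictly between $B^-$ and $B^+$ would be comparable with every element of $\cC$ and hence would extend the chain. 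So maximality in fact shows that $B^- \lessdot B^+$ is a cover in the lattice of biclosed sets of $X$, with $|B^+ \setminus B^-| \geq 2$. The aim is now to exhibit a biclosed set strictly between them, contradicting this cover.

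To do so, I reduce to a finite initial segment. Pick $m$ with $\alpha, \beta \in X_m$ and set $A^\pm := B^\pm \cap X_m$, biclosed in $X_m$ with $|A^+ \setminus A^-| \geq 2$. Iterating Lemma~\ref{thm:ConjectureAFinite} produces a saturated chain from $A^-$ to $A^+$ in which each cover adds a single element; stopping just after $\alpha$ has been added but before $\beta$ has been added yields a biclosed $A$ in $X_m$ with $A^- \subsetneq A \subsetneq A^+$, $\alpha \in A$, and $\beta \notin A$. Theorem~\ref{thm:BiclosedExtend} supplies the closure $\overline{A}$ in $X$ as a biclosed set with $\overline{A} \cap X_m = A$; in particular $\alpha \in \overline{A}$ and $\beta \notin \overline{A}$.

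The candidate intermediate set is $C := B^- \vee \overline{A}$, computed in the biclosed lattice of $X$. Since $A \subseteq A^+ \subseteq B^+$ and $B^+$ is closed in $X$, one has $\overline{A} \subseteq B^+$; combined with $B^- \subseteq B^+$ this gives $C \subseteq B^+$. Since $\overline{A} \cap X_m = A \supsetneq A^- = B^- \cap X_m$, we have $\overline{A} \not\subseteq B^-$, so $C \supsetneq B^-$. The cover $B^- \lessdot B^+$ would then force $C = B^+$.

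The main obstacle, and the principal technical step, is to rule this out by proving $\beta \notin C = \overline{B^- \cup A}$. The closure is built inductively along the suitable order, as in the proof of Theorem~\ref{thm:KeyExtensionLemma}: a root enters the closure either because it lies in $B^- \cup A$ or because it belongs to the positive span of two earlier-added roots. For indices $i \leq m$, closedness of $A$ in $X_m$ together with the suitability clause that orders the fundamental vectors of each $2$-dimensional subset first prevent any root of $X_m \setminus A$ from being added; in particular $\beta$ is not added at any step $\leq m$. The delicate part is to preclude ``leakage back'' into $X_m$ at stages $i > m$, where roots outside $X_m$ contributed by $B^-$ (or produced during the closure) could in principle combine in a positive-span relation to pull $\beta$ into $C$. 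My plan for this step is to invoke the rank-$3$ clean full-subset clause of suitability: any hypothetical span-witness triple $\{\beta, \zeta_1, \zeta_2\}$ is contained in some full subset $F$ of $X$ with $F \cap X_i$ clean, and cleanness combined with the biclosedness of $B^-$ and $A$ restricted to $F$ should force $\beta \in A$, giving the desired contradiction.
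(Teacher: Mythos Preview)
Your overall strategy matches the paper's: isolate the cover $B^- \lessdot B^+$ in the maximal chain, pass to a finite initial segment $X_m$, use Lemma~\ref{thm:ConjectureAFinite} to find an intermediate biclosed set $A$ in $X_m$, extend it to $\overline{A}$ via Theorem~\ref{thm:BiclosedExtend}, and show that $B^- \vee \overline{A}$ lies strictly between $B^-$ and $B^+$. (The detour through a saturated chain to arrange $\alpha \in A$, $\beta \notin A$ is harmless but unnecessary: any $A$ with $A^- \subsetneq A \subsetneq A^+$ works, and note that you have silently assumed $\alpha$ is added before $\beta$ along the chain.)

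The real issue is your last paragraph. You correctly invoke the one-pass construction from the proof of Theorem~\ref{thm:KeyExtensionLemma} (equivalently Proposition~\ref{prop:ClosureOfCoclosed}, applied to the coclosed set $U = B^- \cup \overline{A}$; note that $B^- \cup A$ need not be coclosed in $X$, so use $\overline{A}$). In that construction, at step $i$ one decides \emph{only} whether to add the single root $\gamma_i$. Since $\beta = \gamma_j$ for some $j \leq m$, the root $\beta$ is considered exactly once, at step $j \leq m$, and never again. There is no ``leakage back'': for $i > m$ the only candidate for addition is $\gamma_i \notin X_m$. Your argument for $i \leq m$ already shows $V_m \cap X_m = A$ (using $(B^- \cup \overline{A}) \cap X_m = A^- \cup A = A$ and closedness of $A$ in $X_m$), and for $i > m$ this intersection cannot grow. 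Hence $(B^- \vee \overline{A}) \cap X_m = A \neq A^+ = B^+ \cap X_m$, and you are done.

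So the ``delicate part'' you describe is a phantom, and your proposed plan to resolve it via a fresh appeal to cleanness on rank-$3$ full subsets is both unnecessary and not clearly workable as sketched. The cleanness is already encapsulated in the fact (the content of Proposition~\ref{prop:ClosureOfCoclosed}) that the one-pass procedure computes the closure. The paper's proof compresses exactly this step into the unjustified line $(B_1 \vee \overline{C}) \cap X_m = (B_1 \cap X_m) \vee C$; your attempt to justify it is on the right track, but you should finish it with the observation above rather than inventing a new obstacle.
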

\begin{proof}
	Consider the intersection $B_2$ of all elements of $\cC$ which contain $\{\alpha,\beta\}$. This is a biclosed set since it is the intersection of a decreasing sequence of biclosed sets. Furthermore $B_2$ must be an element of $\cC$ since it is the greatest lower bound of a subset of $\cC$. We similarly let $B_1$ be the largest element of $\cC$ which is disjoint from $\{\alpha,\beta\}$. The claim is equivalent to the existence of a biclosed set in $X$ which is strictly between $B_1$ and $B_2$. Let $X_m$ be a finite initial section of $X$ which contains $\alpha$ and $\beta$. By \Cref{thm:ConjectureAFinite}, there is a biclosed set $C$ in $X_m$ which is strictly between $B_1\cap X_m$ and $B_2\cap X_m$. Let $\overline{C}$ be the closure of $C$ in $X$; by \Cref{thm:KeyExtensionLemma}, $\overline{C}$ is biclosed in $X$. 
	
	We claim that $B_1\vee \overline{C}$ is strictly between $B_1$ and $B_2$. Clearly, this join is strictly greater than $B_1$. It is also at most $B_2$, since $B_1,\overline{C}\leq B_2$. So we need to show $B_1\vee \overline{C}\neq B_2$, which follows since 
	\[(B_1\vee\overline{C})\cap X_m = (B_1\cap X_m)\vee C = C \neq B_2\cap I,\]
	where the latter join is computed in the biclosed sets of $X_m$. Hence there is a biclosed set strictly between $B_1$ and $B_2$, and the theorem follows.
\end{proof}

\section{Sets of vectors where every subset is clean} \label{sec:EasySets}

Let $X$ be a set of vectors. To construct suitable orderings of $X$, we need every triple of vectors $\{ \alpha, \beta, \gamma \}$ of $X$ to be contained in a full subset $Y$ where every initial section of the order intersects $Y$ in a clean set.
The easiest way to do this is if every  triple of vectors is contained in a full subset $Y$ where every subset of $Y$ is clean.
In this section, we will discuss some cases where that occurs.

\begin{lemma} \label{lem:2DimEasy}
	Let $Y$ be a set of vectors contained in a two dimensional linear subset. Then every subset of $Y$ is clean.
\end{lemma}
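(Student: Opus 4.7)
The plan is to reduce the lemma to the single fact, already noted in Section~\ref{sec:prelim}, that any set of vectors whose real span has dimension at most $2$ is clean. This reduction is immediate: cleanness is an intrinsic property of a set of vectors (it does not depend on any ambient $X$), and every subset $Z \subseteq Y$ satisfies $\Span_\RR(Z) \subseteq \Span_\RR(Y)$, which has dimension at most $2$. I would then give a direct proof of this claim for the reader's convenience.

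To prove the claim, let $Z$ be a set of vectors with $\dim \Span_\RR(Z) \leq 2$ and let $B \subseteq Z$ be biclosed. The one-dimensional case is essentially trivial. For the two-dimensional case, I would parameterize the nonzero directions of vectors in $Z$ by an angular coordinate around the origin and observe that for $\alpha,\beta \in Z$ which are not anti-parallel, $\Spanp(\alpha,\beta) \cap Z$ is precisely the set of $Z$-vectors whose direction lies on the short angular arc from $\alpha$ to $\beta$. Biclosedness of $B$ is therefore equivalent to saying: whenever both endpoints of such an arc lie in $B$, every vector of $Z$ on the arc also lies in $B$, and the analogous statement for $Z \setminus B$.

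I would then argue that four vectors $\alpha_1, \alpha_2, \alpha_3, \alpha_4 \in Z$ whose directions appear in this cyclic order around the origin, with $\alpha_1, \alpha_3 \in B$ and $\alpha_2, \alpha_4 \in Z \setminus B$, cannot occur: when all four lie in a common open half-plane, the cone $\Spanp(\alpha_1,\alpha_3)$ contains $\alpha_2$, contradicting biclosedness. The absence of such interleavings around the circle of directions is exactly the condition that $B$ and $Z \setminus B$ can be weakly separated by a linear functional, completing the proof in the generic case.

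The main obstacle will be the degenerate configuration where $Z$ contains an anti-parallel pair $\alpha,-\alpha$, in which case $\Spanp(\alpha,-\alpha) = \Span_\RR(Z)$ and the preceding cyclic-order argument does not directly apply. I expect to handle this case by hand: if both $\alpha,-\alpha \in B$ then closure forces $B = Z$, and if both lie in $Z \setminus B$ then coclosure forces $B = \emptyset$; in either extreme, weak separability is trivial. In the remaining case, with exactly one of $\pm \alpha$ in $B$, I would apply the cyclic-order analysis to each of the two open half-planes cut out by the line $\RR\alpha$ separately and patch the resulting partial separating data into a single functional that weakly separates $B$ from $Z \setminus B$.
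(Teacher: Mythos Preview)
Your reduction and your angular-order argument are correct; they spell out what the paper leaves implicit. The paper's entire proof of this lemma is the single sentence ``This is obvious,'' relying on the earlier remark that sets of linear dimension at most $2$ are automatically clean, together with the trivial observation that any subset of such a set again has span of dimension at most $2$. So there is no real difference in approach, only in level of detail.

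One slip: in your degenerate case, the claim that $\alpha,-\alpha\in B$ forces $B=Z$ by closure is false, since $\Spanp(\alpha,-\alpha)=\RR\alpha$ is only the line through $\alpha$, not all of $\Span_\RR(Z)$. You need not repair this, however. Throughout the paper the ambient set $X$ lies in an open half-space (its elements are positive roots, and more generally $X$ is required to satisfy the fundamental-vector hypothesis stated just before the definition of a suitable ordering), so every $2$-dimensional linear subset already sits inside a proper convex cone and anti-parallel pairs never arise. Your half-plane argument is therefore all that is needed.
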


\begin{proof}
	This is obvious.
\end{proof}

We will say that a set of vectors $Y$ is \newword{disconnected} if we can write $Y$ as $Y_1 \sqcup Y_2$ with $Y_1$ and $Y_2$ nonempty and full in $Y$; we will call $(Y_1, Y_2)$ a \newword{decomposition of $Y$}. 
We will call $Y$ \newword{connected} if $Y$ is not disconnected.

\begin{lemma} \label{lem:DisconnectedEasy}
	Suppose that $Y$ is disconnected, with decomposition $(Y_1, Y_2)$, and that both $Y_1$ and $Y_2$ have the property that every triple of vectors in $Y_i$ is contained in a full subset, every subset of which is clean. Then $Y$ also has the property that every triple of vectors in $Y$, is contained in a full subset, every subset of which is clean. 
\end{lemma}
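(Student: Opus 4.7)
The plan is to split on where the triple $\{\alpha,\beta,\gamma\}$ sits relative to the decomposition $Y = Y_1 \sqcup Y_2$. In the easy case where all three vectors lie in the same $Y_i$, say $Y_1$, the hypothesis on $Y_1$ gives a full subset $F \subseteq Y_1$ containing the triple with every subset clean, and composing the fullness of $F$ in $Y_1$ with the fullness of $Y_1$ in $Y$ immediately yields $F$ full in $Y$, so this same $F$ works. The substantive case is when the triple is mixed across the decomposition, so by symmetry I assume $\alpha,\beta \in Y_1$ and $\gamma \in Y_2$.

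For the mixed case, I would take $F := F_1 \cup \{\gamma\}$, where $F_1 := Y_1 \cap \Span\{\alpha,\beta\}$. Being a linear subset, $F_1$ is automatically full in $Y_1$; and since $F_1$ lies in a subspace of dimension at most two, \Cref{lem:2DimEasy} says every subset of $F_1$ is clean. The one nontrivial geometric observation is $\gamma \notin \Span(F_1) = \Span\{\alpha,\beta\}$: indeed, were $\gamma$ in this span, the fullness of $Y_1$ in $Y$ would place $\gamma$ into $Y_1 \cap \Span\{\alpha,\beta\} \subseteq Y_1$, contradicting $Y_1 \cap Y_2 = \emptyset$. From this, checking that $F$ is full in $Y$ is routine: pairs of elements inside $F_1$ are handled by the chain of fullness $F_1 \subseteq Y_1 \subseteq Y$ together with the irrelevance of $\gamma$ outside $\Span(F_1)$, and mixed pairs $(\phi,\gamma)$ with $\phi \in F_1$ use fullness of $Y_1$ in $Y$ to prevent a third $Y$-element from entering the cross-span $\Span\{\phi,\gamma\}$.

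The main obstacle, and where I expect the proof to take the most care, is showing that every subset $S \subseteq F$ is clean. Given a biclosed $B \subseteq S$, decompose $S = S_1 \sqcup S_2$ and $B = B_1 \sqcup B_2$ according to the partition $F = F_1 \sqcup \{\gamma\}$; a standard restriction argument shows each $B_i$ is biclosed in $S_i$, and each is weakly separable in $S_i$ (using \Cref{lem:2DimEasy} for $i=1$ and triviality for $i=2$). To lift these to weak separability of $B$ in $S$, I would take $v \in \Spanp(B) \cap \Spanp(S \setminus B)$ and write $v = x_1 + x_2 = y_1 + y_2$ with $x_i \in \Spanp(B_i)$ and $y_i \in \Spanp(S_i \setminus B_i)$; then $x_1 - y_1 = y_2 - x_2$ lies in $\Span(F_1) \cap \Span\{\gamma\} = \{0\}$, which is precisely where $\gamma \notin \Span(F_1)$ is crucial. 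Hence $x_1 = y_1$ and $x_2 = y_2$, and then the weak separability of each $B_i$ in $S_i$ forces $x_i = y_i = 0$, so $v = 0$. This direct-sum argument, riding on the transversality $\Span(F_1) \cap \Span\{\gamma\} = \{0\}$, is exactly what lets two separately clean pieces combine into a clean whole.
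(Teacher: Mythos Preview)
Your approach matches the paper's: in the mixed case you take $F_1 \cup \{\gamma\}$ (the paper writes $L \cup \{\gamma\}$, with $L$ the linear subset of $Y$ through $\alpha,\beta$, which equals your $F_1$ by fullness of $Y_1$), verify it is full in $Y$, and argue every subset is clean. One small correction to your fullness sketch for mixed pairs $(\phi,\gamma)$: you need fullness of \emph{both} $Y_1$ and $Y_2$, not just $Y_1$. If $\psi \in Y \cap \Span\{\phi,\gamma\}$ lies in $Y_2$, the way to rule it out is to note $\phi \in \Span\{\gamma,\psi\}$ and invoke fullness of $Y_2$, forcing $\phi \in Y_2$; fullness of $Y_1$ alone says nothing here. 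The paper carries out this two-case split explicitly. Your direct-sum argument for cleanliness is correct and simply unpacks what the paper declares ``obvious.''
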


\begin{proof}
	Let $\{ \alpha, \beta, \gamma \}$ be a triple of vectors in $Y$. If $\{ \alpha, \beta, \gamma \}$ are all in $Y_1$, or all in $Y_2$, then we are done.
	Otherwise, without loss of generality, let $\alpha$, $\beta \in Y_1$ and $\gamma \in Y_2$. 
	
	Let $L$ be the $2$-dimensional linear subset of $Y$ containing $\{ \alpha, \beta \}$. We claim that $L \cup \{ \gamma \}$ is full in $Y$.
	The way that this could fail is if there is some $\phi \in L$ and some $\psi \in Y$ in $\Span(\phi, \gamma)$ other than $\phi$, $\gamma$.
	Since $\phi \in L$, we have $\phi \in Y_1$. If $\psi \in Y_1$, then note that $\gamma \in \Span(\phi, \psi)$, so the fullness of $Y_1$ in $Y$ implies that $\gamma \in Y_1$, contradicting that $\gamma \in Y_2$. 
	Alternatively, if $\psi \in Y_2$, then note that $\phi \in \Span(\gamma, \psi)$, so the fullness of $Y_2$ in $Y$ implies that $\phi \in Y_1$, contradicting that $\phi \in Y_1$. 
	
	Thus, $L \cup \{ \gamma \}$ is a full subset of $Y$ containing $\{ \alpha, \beta, \gamma \}$.
	It is obvious that every subset of $L \cup \{ \gamma \}$ is clean.
\end{proof}

\section{Preliminaries for constructing suitable orders of root systems} \label{sec:SuitableStart}

We now have a substantial list of results which will apply if we can find suitable orderings of root systems.
In Sections~\ref{sec:FiniteSuitable} and \ref{sec:A2TildeSuitable}, we will prove:
\begin{theorem} \label{thm:SimplyLacedOrdering}
	Let $\Phi$ be a finite or affine simply-laced root system. 
	Take any ordering $\gamma_1$, $\gamma_2$, $\gamma_3$, \dots of $\pos$ which refines the root poset on $\pos$. Then this ordering is suitable.
\end{theorem}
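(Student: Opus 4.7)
The plan is to verify the two clauses in the definition of a suitable ordering. For clause~(1), I will argue that any $2$-dimensional linear subset $Y$ of $\pos$ equals $\pos \cap L$ for some plane $L$, so $Y$ is the positive part of the full rank-$2$ subsystem $\Phi \cap L$; its fundamental vectors are then exactly the simple roots of this subsystem, and in any rank-$2$ root system the simple roots are the two minimal elements of the abstract root poset. Since the root poset on $\pos$ restricted to $Y$ is a refinement of this abstract root poset, those two simple roots are also minimal for the restricted order, and hence precede every other vector of $Y$ in any linear extension of the root poset on $\pos$.

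For clause~(2), fix an index $i$ and a triple $\{\alpha,\beta,\gamma\} \subseteq X_i$. I would first apply Lemma~\ref{thm:rank3existence} to embed the triple in a full subsystem $F' \subseteq \Phi$ of rank at most $3$. Because $\Phi$ is simply-laced, each irreducible component of $F'$ has type $A_1$, $A_2$, $A_3$, $\tA_1$, or $\tA_2$ (with the affine types appearing only when $\Phi$ is affine), and in every case $F'$ is geometrically embedded. Hence $F \coloneqq F'_+$ is a full subset of $\pos$ whose dimension equals the rank of $F'$. If the rank of $F'$ is at most $2$, then $F$ is $2$-dimensional and every subset of $F$ is clean by Lemma~\ref{lem:2DimEasy}. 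If $F'$ is disconnected of rank $3$, I would apply Lemma~\ref{lem:DisconnectedEasy} (iterating as needed) to the decomposition of $F$ into its irreducible components to produce a smaller full subset of $F$ containing the triple, every subset of which is clean; I would then verify that this smaller subset is still full inside $\pos$, which follows from the orthogonality of the irreducible components of $F'$ together with the fullness of $F'$ in $\Phi$. The only essential case is when $F'$ is connected of rank $3$, forcing $F' \in \{A_3, \tA_2\}$. Here $F$ is geometrically embedded, so the root poset on $F$ inherited from $V$ coincides with its abstract root poset, and because $X_i$ is an initial segment of a linear extension of the root poset on $\pos$, the intersection $F \cap X_i$ is an order ideal in this root poset. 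Theorem~\hyperlink{thm:introclean}{B}, established for $A_3$ in \Cref{sec:FiniteSuitable} and for $\tA_2$ in \Cref{sec:A2TildeSuitable}, then yields that $F \cap X_i$ is clean.

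The hard part is not this proof at all but the two forthcoming direct verifications of Theorem~\hyperlink{thm:introclean}{B} for $A_3$ and $\tA_2$. The present theorem is essentially a structural reduction which uses the simply-laced hypothesis to eliminate all connected rank-$3$ subsystems other than those two base cases, and then packages everything else through Lemmas~\ref{lem:2DimEasy} and~\ref{lem:DisconnectedEasy}.
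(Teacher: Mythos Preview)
Your proposal is correct and follows essentially the same route as the paper. The paper's argument in Section~\ref{sec:SuitableStart} is exactly this reduction: take the minimal full subsystem $\Lambda$ of rank $\leq 3$ containing the triple (Lemma~\ref{thm:rank3existence}), dispose of rank~$\leq 2$ via Lemma~\ref{lem:2DimEasy}, dispose of reducible rank-$3$ via Lemma~\ref{lem:DisconnectedEasy}, and in the irreducible rank-$3$ simply-laced case observe that $\Lambda$ must be $A_3$ or $\tA_2$, whence Lemma~\ref{lem:KeyCleanLemma} (verified directly in Sections~\ref{sec:FiniteSuitable} and~\ref{sec:A2TildeSuitable}) finishes. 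Clause~(1) is handled exactly as you say, via the remark in Section~2.5 that the root poset of $\Phi$ restricts to a refinement of the root poset of any subsystem, so the two fundamental roots of a rank-$2$ subsystem remain minimal.

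You are in fact slightly more careful than the paper on two points. First, you make explicit that the full subset produced by Lemma~\ref{lem:DisconnectedEasy} is full in $\pos$ (which is immediate since fullness is transitive). Second, you note that geometric embedding of the irreducible rank-$3$ subsystem is what guarantees that the induced root poset coincides with the abstract one, so that $F\cap X_i$ is an order ideal in the sense required by Lemma~\ref{lem:KeyCleanLemma}; the paper asserts this restriction property in one line without isolating the embedding hypothesis. Both points are easy but worth stating, and your observation that ``the hard part is not this proof at all'' is exactly right.
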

In Section~\ref{sec:Folding}, we will prove the same result without the simply laced hypothesis:
\begin{theorem} \label{thm:NonSimplyLacedOrdering}
	Let $\Phi$ be a crystallographic finite root system, or a non-twisted affine root system. 
	Take any ordering $\gamma_1$, $\gamma_2$, $\gamma_3$, \dots of $\pos$ which refines the root poset on $\pos$. Then this ordering is suitable.
\end{theorem}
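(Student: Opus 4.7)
The plan is to verify the two defining conditions of a suitable ordering directly, with \hyperlink{thm:introclean}{Theorem B} as the key input; the argument is formally identical to the proof of \Cref{thm:SimplyLacedOrdering}. The only reason the non-simply-laced case is postponed is that it requires \hyperlink{thm:introclean}{Theorem B} in the rank 3 types $B_3$, $C_3$, $\tC_2$, $\tG_2$, which is what the folding machinery of \Cref{sec:Folding} produces from the simply-laced cases $A_3$ and $\tA_2$ proved in \Cref{sec:FiniteSuitable,sec:A2TildeSuitable}.

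For condition (1), any $2$-dimensional linear subset of $\pos$ is the positive part of a rank 2 full subsystem of $\Phi$, whose fundamental vectors coincide with its two fundamental roots, i.e.\ the minimal elements of the intrinsic root poset. Since our chosen order restricts to a linear extension of the induced root poset on this subsystem, which in turn refines its intrinsic root poset, the fundamental vectors are ordered first.

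For condition (2), given $\alpha, \beta, \gamma \in X_i$, apply \Cref{thm:rank3existence} to obtain a full subsystem $F \subseteq \Phi$ of rank at most $3$ containing $\{\alpha, \beta, \gamma\}$. Since subsystems of finite crystallographic root systems are finite crystallographic, and irreducible subsystems of untwisted affine root systems are finite or untwisted affine, each irreducible component of $F$ has rank at most $3$; components of rank $\leq 2$ are trivially clean by \Cref{lem:2DimEasy}, while components of rank $3$ are either finite crystallographic or rank 3 untwisted affine, and hence are handled by \hyperlink{thm:introclean}{Theorem B}. Since $X_i$ is an order ideal in the root poset of $\pos$, the intersection $X_i \cap F_+$ is an order ideal in the induced root poset of $F$, and so also in the intrinsic root poset of each irreducible component; thus each component of $F$ is clean when intersected with $X_i$, and cleanliness of the whole $X_i \cap F$ follows since a disconnected decomposition of a set of vectors in linearly independent directions lets biclosed subsets and their separating functionals split orthogonally.

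The real work, therefore, sits entirely inside \hyperlink{thm:introclean}{Theorem B} in the rank 3 non-simply-laced types, which is proved in \Cref{sec:Folding} by folding. Granting that, the present theorem is an immediate assembly, identical in form to the simply-laced argument; no new obstacle arises at this step.
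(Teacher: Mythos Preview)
Your proof is correct and mirrors the paper's argument: \Cref{sec:SuitableStart} lays out exactly this reduction, via \Cref{thm:rank3existence}, to cleanliness of root-poset ideals in the six irreducible rank~$3$ types (\Cref{lem:KeyCleanLemma}), with the non-simply-laced cases $B_3$, $C_3$, $\tC_2$, $\tG_2$ supplied by the folding in \Cref{sec:Folding} (\Cref{prop:foldedRank3}). The only divergence is in the reducible case, where the paper invokes \Cref{lem:DisconnectedEasy} to pass to a smaller full subset rather than arguing that $X_i \cap F$ itself is clean; your ``linearly independent directions'' claim happens to hold here (a rank~$\leq 3$ reducible subsystem of a finite or untwisted affine root system is geometrically embedded, since its components are mutually orthogonal of positive norm or else of type $\tA_1 \times A_1$), but it is not entirely automatic and would merit a line of justification.
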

See Section~\ref{sec:TwistedAffine} for difficulties of the twisted affine case, and Section~\ref{sec:TypeH} for some thoughts on the finite non-crystallographic types.

\begin{remark} 
	The notations $\tB_2$ and $\tC_2$ both denote the same root system (just as $B_2$ and $C_2$ do). We have chosen to call it $\tC_2$ on aesthetic grounds: The two edges of the Coxeter diagram labeled $4$ make it look more like a type $\tC$ diagram than a type $\tB$ diagram to us.
\end{remark}

In this section, we discuss the commonalities of the proofs in all the cases. 
For any $\alpha$, $\beta$, $\gamma$ in $\pos$, we need to find a full subset $F$ of $\pos$ containing $\alpha$, $\beta$, $\gamma$ such that every initial section of $F$ is clean.
Let $\Lambda$ be the minimal full root subsystem containing $\{ \alpha, \beta, \gamma \}$. By \Cref{thm:rank3existence}, $\Lambda$ has rank at most 3. It is critical that we do not take $\Lambda$ to be the minimal linear subset containing $\{\alpha,\beta,\gamma\}$, since we have seen by example that this may be a root subsystem of rank 4.

If $\Lambda$ is rank 2, then Lemma~\ref{lem:2DimEasy} applies and we are done.
If $\Lambda$ is a reducible root system, then each factor will have dimension $\leq 2$, so Lemma~\ref{lem:DisconnectedEasy} applies and we are done.

This leaves the cases where $\Lambda$ an irreducible rank 3 root system. Since we know $\Lambda$ is finite or untwisted affine, it follows that $\Lambda$ is of type $A_3$, $B_3$, $C_3$, $\tA_2$, $\tC_2$, or $\tG_2$.
Any order ideal of the root poset on $\pos$ will restrict to an order ideal of the root poset on a subsystem.
Thus, our goal is to prove the following:

\begin{lemma} \label{lem:KeyCleanLemma}
	Let $\Lambda$ be a root system of type $A_3$, $B_3$, $C_3$, $\tA_2$, $\tC_2$, or $\tG_2$. 
	Let $J$ be a finite order ideal of $\Lambda_+$. Then $J$ is clean.
\end{lemma}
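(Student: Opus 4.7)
The plan is to handle the six types in two batches, with the non-simply-laced types being deduced from simply-laced ones via folding. The simply-laced cases $A_3$ and $\tA_2$ will be treated by direct analysis (the content of Sections~\ref{sec:FiniteSuitable} and~\ref{sec:A2TildeSuitable}), and the remaining cases $B_3$, $C_3$, $\tC_2$, $\tG_2$ will be deduced in Section~\ref{sec:Folding} using the foldings $A_5 \to C_3$, $D_4 \to B_3$, $\tA_3 \to \tC_2$, and $\tD_4 \to \tG_2$. Note that $A_5$, $D_4$, $\tA_3$, $\tD_4$ are themselves simply-laced of rank $>3$, so cleanliness of their order ideals is not automatic; however, once $A_3$ and $\tA_2$ are done, Theorem~\ref{thm:SimplyLacedOrdering} gives us a suitable ordering for every simply-laced finite or affine root system, and then Theorem~\ref{thm:BiclosedExtend} shows that any biclosed subset of an order ideal extends to a biclosed set of the whole root system, which by the finite case (for $A_5$, $D_4$) or by classification (for $\tA_3$, $\tD_4$) is separable, so its restriction is separable.

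For $A_3$ and $\tA_2$ I would proceed by induction on $|J|$. Let $\gamma$ be a maximal element of $J$ in the root poset, set $J' := J \setminus \{\gamma\}$, and let $B$ be biclosed in $J$. Its restriction $B \cap J'$ is biclosed in $J'$, and by induction there is a functional $\theta' \in \Vd$ separating $B \cap J'$ inside $J'$. If $\theta'$ already places $\gamma$ on the side dictated by whether $\gamma \in B$, we are done; otherwise, we must construct a new functional. In $A_3$ there are only six positive roots and thus only finitely many pairs $(J, B)$, so after using the $\mathbb{Z}/2$ Dynkin symmetry the argument reduces to a short case check. In $\tA_2$ the order ideals form an infinite family, but each finite order ideal is parametrized by three integer cutoffs along the three chains of positive roots $\{\alpha_i + n \delta\}_{n \geq 0}$; using the $S_3$ symmetry of the $\tA_2$ diagram, we are reduced to finitely many essentially different configurations, and in each we explicitly exhibit a separating functional.

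For the four non-simply-laced cases, I would follow the folding strategy of Section~\ref{sec:Folding}. Given a diagram automorphism $\sigma$ of a simply-laced root system $\Sigma$ whose folded system is the desired $\Lambda$, one lifts the order ideal $J \subseteq \Lambda_+$ to a $\sigma$-stable order ideal $\widetilde J \subseteq \Sigma_+$ whose $\sigma$-orbits are the roots of $J$, and similarly lifts a biclosed $B \subseteq J$ to a $\sigma$-stable biclosed $\widetilde B \subseteq \widetilde J$. By cleanliness of $\widetilde J$ (established via the simply-laced cases as above), there is a separating functional $\widetilde\theta \in \widetilde V^*$ for $\widetilde B$. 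Averaging $\widetilde\theta$ over the action of $\sigma$ produces a $\sigma$-invariant functional, which descends along the folding projection $\widetilde V \twoheadrightarrow V$ to a functional separating $B$ inside $J$.

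The main obstacle will be the direct verification in $\tA_2$. Although each single case is straightforward, the inductive step can fail subtly: when the inductive functional $\theta'$ places the newly added maximal root $\gamma$ on the wrong side, the perturbation needed to fix $\gamma$ must not upset the existing separation on $J'$. The cleanest bookkeeping is to use the rank-$2$ biclosedness constraint, which forces the intersection of $B$ with every rank-$2$ full subsystem of $\Lambda$ to be the inversion set of a dihedral Coxeter group element, so the possibilities near $\gamma$ are quite restricted. Once this obstruction is analyzed and the $A_3$ and $\tA_2$ cases are in hand, the folding argument for the remaining types is largely mechanical.
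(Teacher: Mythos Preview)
Your overall plan matches the paper's: verify $A_3$ and $\tA_2$ directly by induction on $|J|$, deduce a suitable ordering on every simply-laced finite or affine root system, and then fold $D_4 \to B_3$, $A_5 \to C_3$, $\tA_3 \to \tC_2$, $\tD_4 \to \tG_2$. The inductive framework you describe for $A_3$ and $\tA_2$ is also the paper's (it is formalized as Lemma~\ref{lem:KeyCleanLemmaRephrase}): rather than ``constructing a new functional'' when $\theta'$ puts $\gamma$ on the wrong side, the paper shows that if $\gamma^{\perp}$ does not cross the region $\Omega$ of $B'$, then biclosedness \emph{forces} $\gamma$ to lie on the side $\theta'$ already puts it, so no new functional is ever needed. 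Your sketch of $\tA_2$ as ``finitely many configurations up to $S_3$ symmetry'' understates the work; Section~\ref{sec:A2TildeSuitable} is a region-by-region analysis of the affine $\tA_2$ arrangement, including one case (Lemma~\ref{lem:RegionYPart2}) where one must look outside the Tits cone.

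There is, however, a genuine gap in your folding step for the affine targets $\tC_2$ and $\tG_2$. You write that the extension of $\widetilde B$ to a biclosed set of all of $\tA_3$ or $\tD_4$ is separable ``by classification''. This is false in general: in affine type there exist biclosed sets that are not even weakly separable (see \cite[Section~5]{Barkley2022}). What saves the argument, and what the paper uses explicitly in Proposition~\ref{prop:foldedRank3}, is that the closure $\overline{B}$ of the lifted set $f^{-1}(B)$ is automatically $\sigma$-invariant (closure commutes with the diagram automorphism), and one then checks via the classification that every $\sigma$-invariant biclosed set in $\tA_3$ or $\tD_4$ \emph{is} weakly separable. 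Only after this step do you have a separating functional upstairs to average and push down. Without isolating this $\sigma$-invariance argument, the affine folding cases $\tC_2$ and $\tG_2$ do not go through.
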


In every type, we will prove Lemma~\ref{lem:KeyCleanLemma} by induction on $\# J$. 
The base case, $J = \emptyset$, is obvious.
Thus, suppose that we are trying to prove the lemma for some finite order ideal $J$, let $\gamma$ be a maximal element of $J$, and put $J' = J \setminus \{ \gamma \}$. 
So, inductively, we know that $J'$ is clean.

Let $B$ be biclosed in $J$ and put $B' = B \cap J'$. 
So $B'$ is biclosed in $J'$ and, by induction, we know that $B'$ is separable.
Let 
\[ \Omega = \{ \theta \in V^{\ast} : \langle \beta, \theta \rangle < 0 \ \text{for} \ \beta \in B',\ \langle \beta, \theta \rangle > 0 \ \text{for} \ \beta \in J' \setminus B' \}. \]
The assumption that $B'$ is separable means that $\Omega$ is nonempty.
We note that $\Omega$ determines the set $B'$ by $B' = \{ \beta \in J' : \langle \beta,- \rangle <0 \ \text{on} \ \Omega \}$.
If the hyperplane $\gamma^{\perp}$ passes through $\Omega$, then both $B' \cup \{ \gamma \}$ and $B'$ are separable in $J$, so we are done. 
So we need to deal with the cases that $\langle \gamma,-\rangle$ is entirely positive or entirely negative on $\Omega$. 

Thus, in order to prove Lemma~\ref{lem:KeyCleanLemma} for a root system $\Lambda$, we need to prove the following:
\begin{lemma} \label{lem:KeyCleanLemmaRephrase}
	Let $\gamma$ be a root in $\Lambda_+$ and let $J' \sqcup \{ \gamma \}$ be an order ideal in $\Lambda_+$ where $\gamma$ is maximal. 
	Let $\Omega$ be a region of the hyperplane arrangement $\bigcup_{\beta \in J'} \beta^{\perp}$; and let $B' =  \{ \beta \in J' : \langle \beta,-\rangle <0 \ \text{on} \ \Omega \}$.
	If $\langle \gamma,-\rangle$ is negative on all of $\Omega$, then $\gamma$ is in the closure of $B'$; if  $\langle \gamma,-\rangle$ is positive on all of $\Omega$, then $\gamma$ is in the interior of $J' \setminus B'$.
\end{lemma}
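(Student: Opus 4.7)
The plan is to reduce both conclusions to producing an explicit two-term $\Spanp$-relation, translate the hypothesis on $\Omega$ via convex duality, and then finish by a rank-$3$ case analysis.

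First I would reformulate the target. Since $B'$ is biclosed in $J'$, the closure $\overline{B'}$ in $J = J' \sqcup \{\gamma\}$ can differ from $B'$ only by adding $\gamma$, and $\gamma \in \overline{B'}$ if and only if there exist $\alpha, \beta \in B'$ with $\gamma \in \Spanp(\alpha,\beta)$. (Any further potential addition $\theta \in J'$ arising from a pair in $B' \cup \{\gamma\}$ would satisfy $\langle \theta, -\rangle < 0$ on $\Omega$, forcing $\theta \in B'$.) The same argument with $B'$ replaced by $J' \setminus B'$ reduces the dual conclusion to producing $\alpha, \beta \in J' \setminus B'$ with $\gamma \in \Spanp(\alpha, \beta)$. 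So the lemma boils down to producing such a two-term relation in the appropriate set.

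Next I would reinterpret the hypothesis on $\Omega$ using convex cones. The closure $\overline{\Omega}$ is the polyhedral cone in $V^*$ cut out by $\langle -\beta, -\rangle \geq 0$ for $\beta \in B'$ and $\langle \beta, -\rangle \geq 0$ for $\beta \in J' \setminus B'$, and cone duality gives
\[
\overline{\Omega}^{\,*} \;=\; \Spanp\bigl( \{-\beta : \beta \in B'\} \cup \{\beta : \beta \in J' \setminus B'\} \bigr).
\]
The assumption $\langle \gamma, -\rangle < 0$ on $\Omega$ extends to $\langle \gamma, -\rangle \leq 0$ on $\overline{\Omega}$, i.e.\ $-\gamma \in \overline{\Omega}^{\,*}$. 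Working inside the three-dimensional span of $\Lambda$, Carath\'eodory's theorem then produces a nonnegative expression for $-\gamma$ using at most three of the generators above (equivalently, at most three walls of $\Omega$).

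The main obstacle is to promote this Carath\'eodory expression to a genuine two-term positive combination with both summands in $B'$ (in the negative case), or both in $J' \setminus B'$ (in the positive case). Pure linear algebra does not suffice, since the Carath\'eodory combination may mix the two families of signed wall-normals. What rescues the argument is (i) that $\gamma$ is maximal in the order ideal $J$, so every positive root appearing nontrivially alongside $\gamma$ in a linear relation must itself lie in $J$; and (ii) that the positive linear relations among roots in a rank-$3$ crystallographic root system are very restricted, so that the possible decompositions of $\gamma$ as a positive combination of two other roots can be enumerated.

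My plan for closing the argument, matching the organizational outline of the introduction, is to verify the required two-term $\Spanp$-relation directly in the simply-laced rank-$3$ types $A_3$ and $\tA_2$ by inspection of maximal roots of order ideals against their dual regions, and then to transport the conclusion to the non-simply-laced types $B_3$, $C_3$, $\tC_2$, $\tG_2$ via the foldings $D_4 \to B_3$, $A_5 \to C_3$, $\tA_3 \to \tC_2$, and $\tD_4 \to \tG_2$ built in \Cref{sec:Folding}. This reduces \Cref{lem:KeyCleanLemmaRephrase} to finite, manageable checks in just two simply-laced rank-$3$ systems and thereby handles all six irreducible rank-$3$ cases uniformly.
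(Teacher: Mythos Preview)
Your plan is correct and essentially matches the paper's: direct region-by-region verification in $A_3$ (Section~\ref{sec:FiniteSuitable}) and $\tA_2$ (Section~\ref{sec:A2TildeSuitable}), then folding (Sections~\ref{sec:Folding}--\ref{sec:FoldedTypes}) to handle $B_3$, $C_3$, $\tC_2$, $\tG_2$. Your Carath\'eodory/duality reformulation is sound but, as you yourself acknowledge, does not by itself produce the required two-term relation with both summands on the correct side, so the substantive work remains exactly the case analysis and folding that the paper carries out.
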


Lemma~\ref{lem:KeyCleanLemmaRephrase} is what we will check in each root system. 
Note that, since we are assuming that $J' \sqcup \{ \gamma \}$ is an order ideal, where $\gamma$ is maximal, the set $J'$ must contain $\{ \beta \in \Lambda_+ : \beta \prec \gamma \}$ and must be contained in $\{ \beta \in \Lambda_+ : \beta \not\succeq \gamma \}$

\section{Verification of Lemma~\ref{lem:KeyCleanLemmaRephrase} in type $A_3$} \label{sec:FiniteSuitable}

The goal of this section is to verify Lemma~\ref{lem:KeyCleanLemmaRephrase} in type $A_3$.
We first explain the meaning of the figures in these proofs. 
Take the $A_3$ hyperplane arrangement and intersect it with a $2$-sphere around the origin, to obtain an arrangement of great circles on the $2$-sphere. We draw these circles in a stereographic projection, as shown in Figure~\ref{fig:A3Arrangement}.
We label the fundamental domain, $D$, with a $D$ in our figures. So moving towards $D$ is moving down in weak order.

\begin{figure}[h]
	\begin{tikzpicture}
		\draw (2.25,2.25) -- (-2.25,-2.25) node[left] {$\alpha_1^{\perp}$};
		\draw (-2.25,2.25) -- (2.25,-2.25) node[right] {$\alpha_3^{\perp}$};
		\draw (0,-0.5) circle (1.25) node [xshift=20, yshift=-35] {$\alpha_2^{\perp}$};
		\draw (-0.5,0) circle (1.25) node [xshift=-65] {$(\alpha_1+\alpha_2)^{\perp}$};
		\draw (0.5,0) circle (1.25) node [xshift=65] {$(\alpha_2+\alpha_3)^{\perp}$};
		\draw (0,0.5) circle (1.25) node [yshift=45] {$(\alpha_1+\alpha_2+\alpha_3)^{\perp}$};
		\draw (0, -2.1) node {$\boldsymbol{D}$};
	\end{tikzpicture}
	\caption{The $A_3$ hyperplane arrangement.} \label{fig:A3Arrangement}
\end{figure}
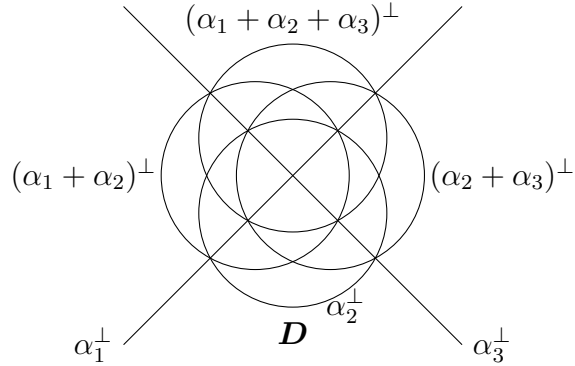


Recall that the positive roots in type $A_3$ are $\{ \alpha_1, \alpha_2, \alpha_3, \alpha_1+\alpha_2, \alpha_2+\alpha_3, \alpha_1+\alpha_2+\alpha_3 \}$.
Let $P= \{ \beta \in \Lambda_+ : \beta \prec \gamma \}$ and let $Q = \{  \beta \in \Lambda_+ : \beta \not\succeq \gamma \}$, so $P \subseteq J' \subseteq Q$.
We list the values of $P$ and $Q$ in the table below:
\[ \begin{array}{|c|c|c|}
	\hline
	\gamma & P & Q \\
	\hline
	\alpha_1 & \emptyset & \{ \alpha_2, \alpha_3, \alpha_2+\alpha_3 \} \\
	\alpha_2 & \emptyset & \{ \alpha_1, \alpha_3 \} \\
	\alpha_3 & \emptyset & \{ \alpha_1, \alpha_2, \alpha_1+\alpha_2 \} \\
	\alpha_1+\alpha_2 & \{ \alpha_1, \alpha_2 \} &  \{ \alpha_1, \alpha_2, \alpha_3, \alpha_2+\alpha_3 \} \\
	\alpha_2+\alpha_3 & \{ \alpha_2, \alpha_3 \} &  \{ \alpha_1, \alpha_2, \alpha_3, \alpha_1+\alpha_2 \} \\
	\alpha_1+\alpha_2+\alpha_3 & \{ \alpha_1, \alpha_2, \alpha_3, \alpha_1+\alpha_2, \alpha_2+\alpha_3 \} & \{ \alpha_1, \alpha_2, \alpha_3, \alpha_1+\alpha_2, \alpha_2+\alpha_3 \} \\
	\hline
\end{array}
\]
In the figures below, $\gamma^{\perp}$ is drawn in bold (and labeled), hyperplanes $\beta^{\perp}$ for $\beta \in P$ are drawn with normal thickness and hyperplanes $\beta^{\perp}$ for $\beta \in Q \setminus P$ are drawn dashed. 
To avoid clutter, we only include the labels $\beta^{\perp}$ for those $\beta$'s which are key to the current argument.

\textbf{Case 1: $\gamma$ is one of $\alpha_1$, $\alpha_2$, $\alpha_3$}. In this case, $\gamma^{\perp}$ crosses through every region in the hyperplane arrangement $\bigcup_{\beta \in Q} \beta^{\perp}$, so the Lemma is vacuously true.
The left-hand side of Figure~\ref{fig:Case1A3} depicts the case $\gamma = \alpha_1$ and the right-hand side depicts $\gamma = \alpha_2$.

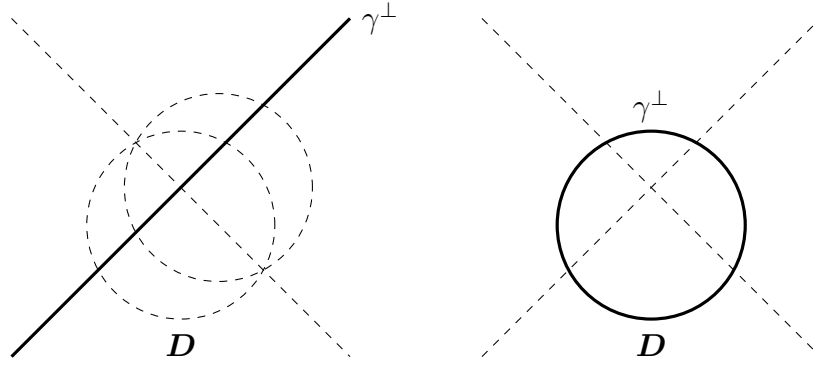
\begin{figure}
	\begin{tikzpicture}
		\draw[very thick] (-2.25,-2.25) -- (2.25,2.25) node[right] {$\gamma^{\perp}$}; 
		\draw[dashed] (0,-0.5) circle (1.25) node [yshift=45] {}; 
		\draw[dashed] (-2.25,2.25) -- (2.25,-2.25) node[right] {}; 
		\draw[dashed] (0.5,0) circle (1.25) node [xshift=65] {};  
		\draw (0, -2.1) node {$\boldsymbol{D}$};
	\end{tikzpicture} \qquad
	\begin{tikzpicture}
		\draw[dashed] (-2.25,-2.25) -- (2.25,2.25) node[right] {}; 
		\draw[very thick] (0,-0.5) circle (1.25) node [yshift=45] {$\gamma^{\perp}$}; 
		\draw[dashed] (-2.25,2.25) -- (2.25,-2.25) node[right] {}; 
		\draw (0, -2.1) node {$\boldsymbol{D}$};
	\end{tikzpicture} 
	\caption{Case 1 in the proof of Lemma~\ref{lem:KeyCleanLemmaRephrase} for $A_3$.} \label{fig:Case1A3}
\end{figure}

\textbf{Case 2: $\gamma$ is one of $\alpha_1+\alpha_2$, $\alpha_2+\alpha_3$}. These two cases are symmetric to each other, we discuss the case $\gamma = \alpha_1 + \alpha_2$, which we depict in Figure~\ref{fig:Case2A3}.
By symmetry, we only have to consider regions of $\bigcup_{\beta \in J'} \beta^{\perp}$ which lie entirely on the negative side of $\gamma^{\perp}$. There are three of these  (shaded in gray) if $J' = Q$, which may merge into fewer regions if $J'$ is smaller. The corresponding $B'$ sets are $J' \cap \{ \alpha_1, \alpha_2 \}$, $J' \cap \{ \alpha_1, \alpha_2, \alpha_2+\alpha_3 \}$ and $J' \cap \{ \alpha_1, \alpha_2, \alpha_3, \alpha_2+\alpha_3 \}$. 
In every case, we have $\alpha_1$ and $\alpha_2 \in B'$, so  $\alpha_1+\alpha_2$ is in the closure of $B'$ as required.

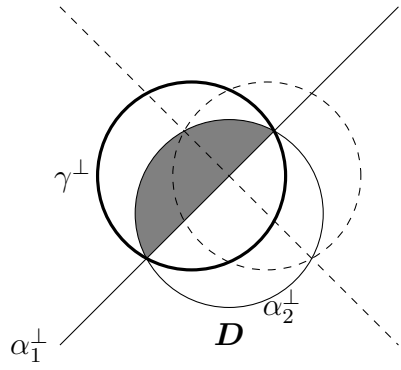
\begin{figure}[h]
	\begin{tikzpicture}
		\begin{scope}
			\path[clip] (2.25, 2.25) -- (-2.25, -2.25) -- (-2.25, 2.25) -- cycle;
			\fill[gray]  (0,-0.5) circle (1.25) ; 
		\end{scope}
		\draw (2.25,2.25) -- (-2.25,-2.25) node[left] {$\alpha_1^{\perp}$}; 
		\draw (0,-0.5) circle (1.25) node [xshift=20, yshift=-35] {$\alpha_2^{\perp}$}; 
		\draw[dashed] (-2.25,2.25) -- (2.25,-2.25) node[right] {}; 
		\draw[very thick] (-0.5,0) circle (1.25) node [xshift=-45] {$\gamma^{\perp}$}; 
		\draw[dashed] (0.5,0) circle (1.25) node [xshift=-65] {};  
		\draw (0, -2.1) node {$\boldsymbol{D}$};
	\end{tikzpicture}
	\caption{Case 2 in the proof of Lemma~\ref{lem:KeyCleanLemmaRephrase} for $A_3$.} \label{fig:Case2A3}
\end{figure}

\textbf{Case 3: $\gamma  = \alpha_1 + \alpha_2 + \alpha_3$} In this case, $P=Q$, so we must have $J' =  \{ \alpha_1, \alpha_2, \alpha_3, \alpha_1+\alpha_2, \alpha_2+\alpha_3 \}$. We depict this case in Figure~\ref{fig:Case3A3}.
Again we just need to check the regions of the hyperplane arrangement $\bigcup_{\beta \in J'} \beta^{\perp}$ which lie entirely on the negative side of $\gamma^{\perp}$. There are $6$ such regions (shaded in gray), with corresponding $B'$ sets $\{ \alpha_1, \alpha_2, \alpha_3, \alpha_1+\alpha_2, \alpha_2+\alpha_3 \}$, $\{ \alpha_2, \alpha_3, \alpha_1+\alpha_2, \alpha_2+\alpha_3 \}$, $\{ \alpha_1, \alpha_3, \alpha_1+\alpha_2, \alpha_2+\alpha_3 \}$, $\{ \alpha_1, \alpha_2, \alpha_1+\alpha_2, \alpha_2+\alpha_3 \}$, $\{  \alpha_1, \alpha_3, \alpha_1+\alpha_2 \}$ and $\{ \alpha_1, \alpha_3, \alpha_2+\alpha_3 \}$. In every one of these cases, either $\{ \alpha_1, \alpha_2+\alpha_3 \} \subseteq B'$, or $\{ \alpha_3, \alpha_1+\alpha_2 \} \subseteq B'$ (or both). So $\alpha_1+\alpha_2+\alpha_3$ is in the closure of $B'$ as required.

\begin{figure}[h]
	\begin{tikzpicture}
		\begin{scope}
			\path[clip] (2.25, 2.25) -- (-2.25, -2.25) -- (-2.25, 2.25) -- cycle;
			\fill[gray]  (0.5,0) circle (1.25) ; 
		\end{scope}
		\begin{scope}
			\path[clip] (-2.25, 2.25) -- (2.25, -2.25) -- (2.25, 2.25) -- cycle;
			\fill[gray]  (-0.5,0) circle (1.25) ; 
		\end{scope}
		\draw (2.25,2.25) -- (-2.25,-2.25) node[left] {$\alpha_1^{\perp}$};
		\draw (-2.25,2.25) -- (2.25,-2.25) node[right] {$\alpha_3^{\perp}$};
		\draw (0,-0.5) circle (1.25) node [xshift=20, yshift=-35] {}; 
		\draw (-0.5,0) circle (1.25) node [xshift=-65] {$(\alpha_1+\alpha_2)^{\perp}$};
		\draw (0.5,0) circle (1.25) node [xshift=65] {$(\alpha_2+\alpha_3)^{\perp}$};
		\draw[very thick] (0,0.5) circle (1.25) node [yshift=45] {$\gamma^{\perp}$};
		\draw (0, -2.1) node {$\boldsymbol{D}$};
	\end{tikzpicture}
	\caption{Case 3 in the proof of Lemma~\ref{lem:KeyCleanLemmaRephrase} for $A_3$.} \label{fig:Case3A3}
\end{figure}


\section{Verification of  Lemma~\ref{lem:KeyCleanLemmaRephrase} in type $\tA_2$}\label{sec:A2TildeSuitable}
Let $\Phi$ be a root system of type $\tA_2$.
We write $\alpha_1$, $\alpha_2$, $\alpha_3$ for the fundamental roots and $\delta = \alpha_1+\alpha_2+\alpha_3$ for the imaginary root.
We define:
\[ 
\beta_1^0 = \alpha_1 \quad  \beta_2^0 = \alpha_1+\alpha_2 \quad  \beta_3^0 = \alpha_2 \quad 
\beta_4^0 = \alpha_2+\alpha_3  \quad  \beta_5^0 = \alpha_3  \quad   \beta_6^0 = \alpha_1+\alpha_3. 
\]
We define $\beta_i^k = \beta_i^0 + k \delta$.  The positive roots are $\beta_i^k$ for $1 \leq i \leq 6$ and $k \geq 0$.
We always take the subscripts on the $\beta$'s to be cyclic modulo $6$.
For each $1 \leq i \leq 3$, the roots $\beta_i^0$ and $\beta_{i+3}^0$ are the fundamental vectors of an $\tA_1$ subsystem, with $\beta_i^0+\beta_{i+3}^0 = \delta$.

Our goal in this section is to prove Lemma~\ref{lem:KeyCleanLemmaRephrase} in type $\tA_2$.
Thus, fix throughout this section a positive real root $\gamma = \beta_g^k$ and a finite order ideal $J$ in which $\gamma$ is maximal. 
For each $1 \leq h \leq 6$, we have $\beta_h^0 \prec \beta_h^1 \prec \beta_h^2 \cdots$, so there is some index $k_h \geq -1$ such that $J \cap \{ \beta_h^j : j \geq 0 \}$ is $\{ \beta_h^j : j \leq k_h \}$. 
We introduce the abbreviation $\beta_h^{\max}$ for $\beta_h^{k_h}$.
We also set $r=1$ if $g$ is odd and $r=0$ if $g$ is even. 
The following lemmas are immediate:

\begin{lemma}
	In the above notation, we have $\beta_{g-1}^a + \beta_{g+1}^b = \beta_g^{a+b+r}$. 
\end{lemma}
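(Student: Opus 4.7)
The plan is to reduce the identity to the case $a = b = 0$ by exploiting the additive structure $\beta_i^k = \beta_i^0 + k\delta$. Expanding both sides,
\[ \beta_{g-1}^a + \beta_{g+1}^b = \beta_{g-1}^0 + \beta_{g+1}^0 + (a+b)\delta, \qquad \beta_g^{a+b+r} = \beta_g^0 + (a+b+r)\delta, \]
so after cancelling $(a+b)\delta$ it suffices to verify the single identity $\beta_{g-1}^0 + \beta_{g+1}^0 = \beta_g^0 + r\delta$ for each $g$ (with subscripts cyclic mod $6$).

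I would then split into the two parity cases. When $g$ is even and $r = 0$, both $\beta_{g-1}^0$ and $\beta_{g+1}^0$ are simple roots among $\{\alpha_1, \alpha_2, \alpha_3\}$, and their sum is manifestly the adjacent ``two-term'' root $\beta_g^0$: these are the three one-line checks $\alpha_1 + \alpha_2 = \beta_2^0$, $\alpha_2 + \alpha_3 = \beta_4^0$, and $\alpha_3 + \alpha_1 = \beta_6^0$. When $g$ is odd and $r = 1$, both $\beta_{g-1}^0$ and $\beta_{g+1}^0$ are ``two-term'' roots; summing them yields a simple root plus $\alpha_1+\alpha_2+\alpha_3$, which by definition of $\delta$ is $\beta_g^0 + \delta$. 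For instance, for $g = 1$: $(\alpha_1 + \alpha_3) + (\alpha_1 + \alpha_2) = \alpha_1 + \delta = \beta_1^0 + \delta$, and similarly for $g = 3, 5$ by the cyclic symmetry of the labeling.

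The main (non-)obstacle is purely bookkeeping: one must keep track of the cyclic convention that $\beta_0$ means $\beta_6$ and $\beta_7$ means $\beta_1$, and one must use the defining identity $\delta = \alpha_1 + \alpha_2 + \alpha_3$ repeatedly. Once these are in hand, the six base cases are immediate vector-space computations, justifying the author's remark that the lemma is ``immediate.''
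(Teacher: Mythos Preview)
Your argument is correct and is exactly the direct verification the paper has in mind when it declares the lemma ``immediate'' without further proof: reduce via $\beta_i^k = \beta_i^0 + k\delta$ to the base identity $\beta_{g-1}^0 + \beta_{g+1}^0 = \beta_g^0 + r\delta$ and check the six residues of $g$ modulo $6$. There is nothing to add.
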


\begin{lemma} \label{lem:Comparable}
	For any indices $p$ and $q$ and any $j \geq 0$, we have $\beta_p^j \prec \beta_q^{j+1}$.
\end{lemma}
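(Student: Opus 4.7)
The plan is to reduce the inequality $\beta_p^j \prec \beta_q^{j+1}$ to showing that the difference $\beta_q^{j+1} - \beta_p^j$ is a nonzero element of $\Spanp\Pi$. Since $\beta_i^k = \beta_i^0 + k\delta$ by definition, this difference simplifies at once to
\[ \beta_q^{j+1} - \beta_p^j = \beta_q^0 - \beta_p^0 + \delta, \]
which is independent of $j$. So the full statement collapses to a single finite check, one expression for each pair $(p,q) \in \{1,\dots,6\}^2$.

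The key input is the identity $\beta_p^0 + \beta_{p+3}^0 = \delta$ (subscripts cyclic modulo $6$), which was recorded in the paragraph just before the lemma: the three pairs $\{\beta_1^0,\beta_4^0\}$, $\{\beta_2^0,\beta_5^0\}$, $\{\beta_3^0,\beta_6^0\}$ are the fundamental vectors of the three $\tA_1$ subsystems, each summing to $\delta$. One may verify this directly from the explicit list defining the $\beta_i^0$. Substituting this expression for $\delta$, choosing the representative whose first term matches the root being subtracted, yields
\[ \beta_q^{j+1} - \beta_p^j = \beta_q^0 + \beta_{p+3}^0. \]
This is a sum of two positive roots of the underlying finite $A_2$ system, and hence a nonzero element of $\Spanp\Pi$. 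Therefore $\beta_p^j \prec \beta_q^{j+1}$ in the root poset, as required.

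There is no real obstacle; the lemma is essentially a one-line bookkeeping identity. The only content is the choice of how to write $\delta$ as a sum of two ``opposite'' positive roots so that the $\beta_p^0$ term cancels automatically, leaving a manifestly positive expression.
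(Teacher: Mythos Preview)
Your proof is correct and matches the spirit of the paper, which declares the lemma ``immediate'' and gives no argument at all; your computation $\beta_q^{j+1}-\beta_p^j = \beta_q^0 + \beta_{p+3}^0$ is exactly the one-line check the paper is implicitly invoking. One small terminological slip: the six vectors $\beta_1^0,\dots,\beta_6^0$ are not the positive roots of the underlying finite $A_2$ system (for instance $\beta_5^0=\alpha_3$ and $\beta_6^0=\alpha_1+\alpha_3$ involve the affine simple root), but rather the positive roots of $\tA_2$ of height $\leq 2$; this does not affect the argument, since all that is needed is that each $\beta_i^0$ lies in $\Spanp\Pi\setminus\{0\}$.
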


%
%

Recall that $k$ is the index such that $\gamma = \beta_g^k$. Lemma~\ref{lem:Comparable} immediately implies:
\begin{lemma}  \label{lem:FormOfIdeal}
	Each of the $k_h$ is either $k-1$ or $k$.
\end{lemma}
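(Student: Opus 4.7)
The plan is to use the two defining facts about $J$ and $\gamma$: namely, that $J$ is an order ideal containing $\gamma = \beta_g^k$, and that $\gamma$ is maximal in $J$. Each fact will give one of the two inequalities $k-1 \leq k_h \leq k$.

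First I would prove the upper bound $k_h \leq k$. Assume for contradiction that $k_h \geq k+1$ for some $h$, so that $\beta_h^{k+1} \in J$. Applying Lemma~\ref{lem:Comparable} with $p=g$, $q=h$, and $j=k$ gives $\gamma = \beta_g^k \prec \beta_h^{k+1}$, contradicting the maximality of $\gamma$ in $J$. Hence $k_h \leq k$ for every $h$.

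Next I would prove the lower bound $k_h \geq k-1$. If $k=0$ there is nothing to show, since $k_h \geq -1$ by definition. If $k \geq 1$, apply Lemma~\ref{lem:Comparable} with $p=h$, $q=g$, $j=k-1$ to get $\beta_h^{k-1} \prec \beta_g^k = \gamma$. Because $J$ is an order ideal containing $\gamma$, this forces $\beta_h^{k-1} \in J$, so $k_h \geq k-1$.

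Combining the two bounds yields $k_h \in \{k-1, k\}$, as claimed. There is no real obstacle here: the statement is an immediate bookkeeping consequence of Lemma~\ref{lem:Comparable} together with the order-ideal/maximality hypotheses on $J$ and $\gamma$. The only subtlety worth flagging is that the argument uses Lemma~\ref{lem:Comparable} in the case $h=g$ (comparing $\beta_g^{k-1}$ with $\beta_g^k$ and $\beta_g^k$ with $\beta_g^{k+1}$), which is permitted since the lemma places no restriction on whether $p$ and $q$ coincide.
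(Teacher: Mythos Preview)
Your proof is correct and follows essentially the same approach as the paper: both use Lemma~\ref{lem:Comparable} twice, once to show $\beta_h^{k-1}\prec\gamma$ (giving $\beta_h^{k-1}\in J$ since $J$ is an order ideal) and once to show $\gamma\prec\beta_h^{k+1}$ (giving $\beta_h^{k+1}\notin J$ since $\gamma$ is maximal). Your explicit handling of the $k=0$ edge case and the remark about $h=g$ are harmless additional care that the paper omits.
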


\begin{proof}
	We need to show that $\beta_h^{k-1} \in J$ and $\beta_h^{k+1} \not\in J$. For the first claim, Lemma~\ref{lem:Comparable} shows that $\beta_h^{k-1} \prec \beta_g^k = \gamma$, and $J$ is an order ideal containing $\gamma$.  For the second claim, Lemma~\ref{lem:Comparable} shows that $\beta_h^{k+1} \succ \beta_g^k = \gamma$, and $J$ is an order ideal in which $\gamma$ is maximal, so $J$ cannot contain any root which dominates $\gamma$.
\end{proof}

\begin{lemma} \label{lem:TightBound}
	In the above notation, we have $k_{g-1} = k_{g+1} = k-r$.
\end{lemma}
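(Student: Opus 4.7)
The plan is a direct case analysis on the parity of $g$, exploiting the fact that in the cyclic list
\[
\beta_1^0=\alpha_1,\ \beta_2^0=\alpha_1+\alpha_2,\ \beta_3^0=\alpha_2,\ \beta_4^0=\alpha_2+\alpha_3,\ \beta_5^0=\alpha_3,\ \beta_6^0=\alpha_1+\alpha_3,
\]
the roots with odd index are exactly the three fundamental roots of $A_2$ and the roots with even index are exactly the three non-simple positive roots; moreover, two cyclically adjacent entries always differ by a single fundamental root. Thus for every $h \in \{g-1, g+1\}$ we have $\beta_h^k - \beta_g^k = \beta_h^0 - \beta_g^0 = \pm \alpha_i$ for some $i$, where the sign is $+$ if $g$ is odd and $-$ if $g$ is even. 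The first step is to record this observation by inspection of the six cases for $g$.

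Given that, the two cases of the lemma follow cleanly. If $r=1$, i.e.\ $g$ is odd, then $\beta_{g \pm 1}^k = \gamma + \alpha_i \succ \gamma$ for appropriate simple roots; since $\gamma$ is maximal in the order ideal $J$, no element strictly above $\gamma$ can lie in $J$, so $\beta_{g \pm 1}^k \notin J$ and hence $k_{g\pm 1} \leq k-1$. Combined with $k_{g \pm 1} \geq k-1$ from \Cref{lem:FormOfIdeal}, this forces $k_{g \pm 1} = k-1 = k - r$.

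If $r=0$, i.e.\ $g$ is even, then $\beta_{g \pm 1}^k = \gamma - \alpha_i \prec \gamma$; since $J$ is an order ideal containing $\gamma$, we have $\beta_{g \pm 1}^k \in J$, so $k_{g \pm 1} \geq k$. Combined with $k_{g \pm 1} \leq k$ from \Cref{lem:FormOfIdeal}, we conclude $k_{g \pm 1} = k = k - r$.

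The only potential obstacle is making the uniform claim about adjacent differences genuinely uniform rather than a six-line case check; it should be possible to phrase it once and for all by noting that the cyclic sequence $\beta_1^0,\ldots,\beta_6^0$ traces out the six positive roots of $A_2$ in the cyclic order around the origin given by the rank-$2$ root system, so consecutive entries always differ by a fundamental root. With that observation in hand, the rest of the proof is a one-line deduction in each of the two parity cases.
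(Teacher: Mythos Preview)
Your proposal is correct and follows essentially the same approach as the paper's proof: both split on the parity of $g$ (i.e., on $r$) and use that $\beta_{g\pm 1}^k$ lies strictly above $\gamma$ when $r=1$ and strictly below $\gamma$ when $r=0$, then conclude using maximality of $\gamma$ in $J$ and the fact that $J$ is an order ideal. Your version is slightly more explicit in identifying the difference $\beta_{g\pm 1}^0 - \beta_g^0$ as $\pm\alpha_i$ and in invoking \Cref{lem:FormOfIdeal} for the complementary inequality, but the argument is the same.
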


\begin{proof}
	First, suppose that $r=0$. Then $\beta_{g \pm 1}^k \prec \beta_{g}^k=\gamma$.  Since $\gamma \in J$ and $J$ is an order ideal, this shows that $\beta_{g \pm 1}^k \in J$ and $k_{g \pm 1} = k$.
	
	Now, suppose that $r=1$. Then $\beta_{g \pm 1}^k \succ \beta_{g}^k=\gamma$.  Since $\gamma$ is maximal in $J$, we deduce that $\beta_h^k \not\in J$, and thus $k_h = k-1$.
\end{proof}

Again, our goal in this section is to prove Lemma~\ref{lem:KeyCleanLemmaRephrase}. Write $J'=J\setminus\{\gamma\}$.
Fix throughout this section a region $\Omega$ of the $J'$-hyperplane arrangement. Our goal is to show that one of the following holds:
\begin{enumerate}
	\item There are $\zeta_1$ and $\zeta_2$ in $J'$ with $\gamma \in \Span_+(\zeta_1, \zeta_2)$ such that $\langle \zeta_1, \Omega \rangle$ and $\langle \zeta_2, \Omega \rangle$ have the same sign, or 
	\item $\gamma^{\perp}$ passes through the interior of $\Omega$.
\end{enumerate}
When we show that either of these hold, we will say that ``$\Omega$ is safe". So our goal is to show that all regions are safe.

We will depict our arguments visually, and we now explain the conventions with which we draw our diagrams.
Replacing $\Omega$ by $- \Omega$ if necessary, we may, and do, \textbf{assume that $\Omega$ meets the Tits cone $\{ \theta\in \Vd : \langle \delta, \theta \rangle > 0 \}$}.
Figure~\ref{fig:tA2} depicts the intersection of the $J$-hyperplane arrangement with the hyperplane $\{ \theta\in\Vd : \langle \delta, \theta \rangle =1 \}$.
We will use language that refers to the geometry of this diagram frequently, talking about ``parallel planes", ``rhombi", ``triangles", etcetera.

Our choice to use affine arrangements means that we can use the classical representation of $\tA_2$ as an affine reflection group.
However, we must point out one subtlety: $\Omega$ is safe if $\gamma^{\perp}$ passes through the interior of $\Omega$, but $\Omega$ may extend both above and below the plane $\delta^{\perp}$. 
Specifically, there are two regions of the hyperplane arrangement which extend both above and below $\delta^{\perp}$ but whose intersection with $\gamma^{\perp}$ is entirely in  on the negative side of $\delta^{\perp}$; we study these regions in Lemma~\ref{lem:RegionYPart2}.
$\Omega$ is safe in those cases even though our visual conventions mean that we can't see the hyperplane $\gamma^{\perp}$ meeting the region.

Here is the first easy case in which we know $\Omega$ is safe.
\begin{lemma} \label{lem:ParallelPlanes}
	The roots $\beta_g^{k-1}$ and $\beta_{g+3}^{\max}$ lie in $J'$. If $\Omega$ lies between the parallel hyperplanes $(\beta_g^{k-1})^{\perp}$ and $(\beta_{g+3}^{\max})^{\perp}$, then $\Omega$ is safe.
\end{lemma}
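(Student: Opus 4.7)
The plan is to exhibit $\gamma$ as a positive linear combination of the two named roots $\beta_g^{k-1}$ and $\beta_{g+3}^{\max}$, and to show that on any region lying between their orthogonal hyperplanes the two pairings $\langle \beta_g^{k-1},-\rangle$ and $\langle \beta_{g+3}^{\max},-\rangle$ agree in sign. Together these facts verify clause~(1) of the definition of ``safe.''

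First I would confirm membership in $J'$. Since $\gamma - \beta_g^{k-1} = \delta \in \Spanp \Pi$, we have $\beta_g^{k-1} \prec \gamma$, so $\beta_g^{k-1} \in J \setminus \{\gamma\} = J'$. Likewise, Lemma~\ref{lem:Comparable} gives $\beta_{g+3}^0 \prec \beta_g^k = \gamma$, so $\beta_{g+3}^0 \in J$ and hence $k_{g+3} \geq 0$; then $\beta_{g+3}^{\max}$ is a well-defined root of $J$, distinct from $\gamma$ since $g+3 \not\equiv g \pmod{6}$.

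The heart of the argument is a direct computation. Using $\beta_g^0 + \beta_{g+3}^0 = \delta$, write
\[\beta_g^{k-1} = \beta_g^0 + (k-1)\delta, \qquad \beta_{g+3}^{\max} = -\beta_g^0 + (k_{g+3}+1)\delta,\]
and solve $a\beta_g^{k-1}+b\beta_{g+3}^{\max} = \gamma = \beta_g^0 + k\delta$. This forces $a-b=1$ and $a(k-1) + b(k_{g+3}+1) = k$, giving $b = 1/(k+k_{g+3})$ and $a = 1+b$, both strictly positive since $k+k_{g+3} \geq 1$. Hence $\gamma \in \Spanp(\beta_g^{k-1}, \beta_{g+3}^{\max})$.

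Finally, in the affine slice $\{\langle\delta,\theta\rangle = 1\}$ used throughout this section, the two hyperplanes project to parallel lines $\langle\beta_g^0,\cdot\rangle = 1-k$ and $\langle\beta_g^0,\cdot\rangle = 1+k_{g+3}$, and the open strip between them in the positive Tits cone is precisely $\{\theta : \langle\beta_g^{k-1},\theta\rangle > 0 \text{ and } \langle\beta_{g+3}^{\max},\theta\rangle > 0\}$, as one reads off from the formulas above. By our standing convention, $\Omega$ meets the positive Tits cone; since $\Omega$ is a region of the $J'$-arrangement the sign of each linear functional is constant on $\Omega$, so both pairings are strictly positive throughout $\Omega$. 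Combined with the positive spanning relation, this yields the matching-sign condition and shows $\Omega$ is safe. The main point I would have to double-check carefully is the extreme edge case $k = 0$, where $\beta_g^{k-1}$ does not exist; this should fall outside the lemma's scope since the assertion about the parallel hyperplanes is then vacuous.
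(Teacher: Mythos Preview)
Your argument is correct and follows the same route as the paper: show $\gamma\in\Spanp(\beta_g^{k-1},\beta_{g+3}^{\max})$ and observe that ``between the parallel hyperplanes'' forces both pairings to be positive, hence of the same sign. You supply considerably more detail than the paper does (the explicit coefficients $a,b$ and the justification via the Tits-cone convention that ``between'' means both $>0$), and your caution about $k=0$ is well placed---in that case $\beta_g^{k-1}$ is not a root, so the lemma is tacitly stated for $k\geq 1$.
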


\begin{proof}
	The root $\beta_g^{k-1}$ lies in $J'$ since $\beta_g^{k-1} \prec \beta_g^k$, and the root $\beta_{g+3}^{\max}$ lies in $J'$ by definition, so we have verified the first sentence.
	The root $\beta_g^k$ is in the positive span of $\beta_g^{k-1}$ and $\beta_{g+3}^{\max}$. If $\Omega$ lies between these hyperplanes, then $\langle \beta_g^{k-1}, \Omega \rangle$ and $\langle \beta_{g+3}^{\max}, \Omega \rangle$ are both $>0$, so $\Omega$ is safe.
\end{proof}

Here is the other main case where $\Omega$ is safe:
\begin{lemma} \label{lem:EasyWedges}
	Let $0 \leq a \leq k-r$. Then $\beta_{g-1}^a$ and $\beta_{g+1}^{k-r-a}$ are both in $J'$; if $\langle \beta_{g-1}^a, \Omega \rangle$ and $\langle \beta_{g+1}^{k-r-a}, \Omega \rangle$ have the same sign, then $\Omega$ is safe.
\end{lemma}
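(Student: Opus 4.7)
The plan is to verify the two claims directly from the identity $\beta_{g-1}^{a} + \beta_{g+1}^{b} = \beta_g^{a+b+r}$ stated just before Lemma~\ref{lem:FormOfIdeal}, combined with the tight bound $k_{g-1} = k_{g+1} = k-r$ coming from Lemma~\ref{lem:TightBound}.

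First, for membership in $J'$: since $0 \leq a \leq k-r = k_{g-1}$, the definition of $k_{g-1}$ gives $\beta_{g-1}^a \in J$, and similarly $0 \leq k-r-a \leq k-r = k_{g+1}$ yields $\beta_{g+1}^{k-r-a} \in J$. Neither of these roots can equal $\gamma = \beta_g^k$ because they have different subscripts mod $6$ (noting $g-1 \neq g$ and $g+1 \neq g$ mod $6$), so both lie in $J' = J \setminus \{\gamma\}$.

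Second, for safety: plug $b = k-r-a$ into the identity to obtain
\[
\beta_{g-1}^{a} + \beta_{g+1}^{k-r-a} \;=\; \beta_g^{a+(k-r-a)+r} \;=\; \beta_g^{k} \;=\; \gamma.
\]
Hence $\gamma$ lies in $\Span_+(\beta_{g-1}^{a}, \beta_{g+1}^{k-r-a})$ (with both coefficients equal to $1$). If the two linear functionals $\langle \beta_{g-1}^a, - \rangle$ and $\langle \beta_{g+1}^{k-r-a}, - \rangle$ take the same sign on $\Omega$, then setting $\zeta_1 = \beta_{g-1}^a$ and $\zeta_2 = \beta_{g+1}^{k-r-a}$ verifies condition (1) in the definition of ``safe'' given earlier in the section, and we are done.

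There is no real obstacle here: the lemma is essentially just the content of the additive identity together with the range of indices allowed by Lemma~\ref{lem:TightBound}. The only mild subtlety is to confirm that $\beta_{g\pm 1}^{\bullet}$ is never accidentally $\gamma$, which is immediate from the cyclic indexing modulo $6$.
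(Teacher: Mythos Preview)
Your proof is correct and follows essentially the same approach as the paper: both invoke Lemma~\ref{lem:TightBound} to get $k_{g\pm1}=k-r$, place the two roots in the ideal via the index bounds, and use the additive identity $\beta_{g-1}^a+\beta_{g+1}^{k-r-a}=\beta_g^k$ to conclude safety. Your argument is in fact slightly more careful than the paper's, which only checks membership in $J$ rather than $J'$; your observation about the subscripts mod $6$ fills that small gap.
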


\begin{proof}
	From Lemma~\ref{lem:TightBound}, we have $k_{g-1} = k_{g+1} = k-r$. Since $a \leq k-r$ and $k-r-a \leq k-r$, we deduce that $\beta_{g-1}^a$ and $\beta_{g+1}^{k-r-a} \in J$. We also have $\beta_g^k = \beta_{g-1}^a+\beta_{g+1}^{k-r-a}$. Thus, if $\langle \beta_{g-1}^a, \Omega \rangle$ and $\langle \beta_{g+1}^{k-r-a}, \Omega \rangle$ have the same sign, then $\Omega$ is safe.
\end{proof}

Define $K$ to be the following set of positive roots:
\[
K= \{ \beta_g^{k-1}, \beta_{g+3}^{\max} \} \cup \{ \beta_{g \pm 1}^j \ : \ 0 \leq j \leq k-r \} 
\]

Lemmas~\ref{lem:ParallelPlanes} and~\ref{lem:EasyWedges} show that $K \subseteq J'$, so the $J'$-hyperplane arrangement refines the $K$-hyperplane arrangment.
The $K$-hyperplane arrangement are the lines of ordinary thickness in Figure~\ref{fig:tA2}; the bold line is $\gamma^{\perp}$.
The dashed lines are hyperplanes which may be in $J'$ but are not in $K$.

There are many regions of the $K$-hyperplane arrangement such that, if $\Omega$ is one those regions, then Lemmas~\ref{lem:ParallelPlanes} and~\ref{lem:EasyWedges} tell us that $\Omega$ is safe immediately; those regions are shaded gray in Figure~\ref{fig:tA2}.
We have labeled the fundamental domain $D$, so $D$ is on the positive side of every Coxeter hyperplane.
The remainder of the proof is working through the remaining regions of Figure~\ref{fig:tA2} and checking that $\Omega$ is safe in those cases as well.
We have labeled these remaining regions $X_+$ and $X_-$ (blue), $R_1$ through $R_{k-r}$ (red), and $Y_+^1$ and $Y_-^1$ (green). 
These cases are addressed in Lemmas~\ref{lem:RegionX}, Lemma~\ref{lem:RegionR}, and Lemmas~\ref{lem:RegionYPart1} and~\ref{lem:RegionYPart2}, respectively.

\usetikzlibrary{math}

\begin{figure}[h]
	\begin{tikzpicture}[scale=0.7]
		\fill[cyan] (-10, 5*1.732) -- (-4,  5*1.732) -- (-2,1.732*2) -- (-2.5,1.732*1) -- (-10, 1.732*1) -- cycle; 
		\fill[cyan] (10, 5*1.732) -- (4,  5*1.732) -- (2,1.732*2) -- (2.5,1.732*1) -- (10, 1.732*1) -- cycle;
		\fill[red!80] (-2, 2*1.732) -- (-1,  3*1.732) -- (0,1.732*2) -- (-1,  1*1.732) -- cycle;
		\fill[red!80] (0, 2*1.732) -- (1,  3*1.732) -- (2,1.732*2) -- (1,  1*1.732) -- cycle;
		\fill[green] (-10, -4*1.732) -- (-10, -2*1.732) -- (-6, -2*1.732) -- (-8, -4*1.732) -- cycle;
		\fill[green] (10, -4*1.732) -- (10, -2*1.732) -- (6, -2*1.732) -- (8, -4*1.732) -- cycle;
		\fill[lightgray] (-10, -2*1.732) -- (10, -2*1.732) -- (10, 1*1.732) --  (-10, 1*1.732) -- cycle;
		\foreach \j in {0,1,...,2}   {
			\fill[lightgray, even odd rule] (5-2*\j, 5*1.732) -- (-4-2*\j, -4*1.732) --  (8-2*\j, -4*1.732) -- (-1-2*\j, 5*1.732) -- cycle;
		}  
		\draw[very thick] (-10,1.732*2) -- (10,1.732*2) node[above left] {$(\beta_g^{\max})^{\perp}=\gamma^{\perp} \!\!\!$};
		\draw (-10,1.732*1) -- (10,1.732*1) node[above  left] {$(\beta_g^{k_g-1})^{\perp}$};
		\draw (-10,-1.732*2) -- (10,-1.732*2) node[below  left] {$(\beta_{g+3}^{\max})^{\perp}$};
		\foreach \j in {0,1,...,2}  {
			\draw (5-2*\j, 5*1.732) -- (-4-2*\j, -4*1.732);
			\draw (-5+2*\j, 5*1.732) -- (4+2*\j, -4*1.732);
		}
		\foreach \j in {-2,...,-1}  {
			\draw[dashed] (5-2*\j, 5*1.732) -- (-4-2*\j, -4*1.732);
			\draw[dashed] (-5+2*\j, 5*1.732) -- (4+2*\j, -4*1.732);
		}
		\foreach \j in {-1,...,1} {
			\draw[dashed] (-10,1.732*\j) -- (10,1.732*\j);
		}
		\draw (0, -1.155) node {$\boldsymbol{D}$};
		
		\draw[cyan, fill=cyan] (-5,1.732*2) ellipse (0.5 and 0.5);
		\draw (-5,1.732*2) node {$X_+$};
		\draw[cyan, fill=cyan] (5,1.732*2) ellipse (0.5 and 0.5);
		\draw (5,1.732*2) node {$X_-$};
		\draw[red!80, fill=red!80] (-1,1.732*2) ellipse (0.3 and 0.3);
		\draw (-1,1.732*2) node {$R_1$};
		\draw[red!80, fill=red!80] (1,1.732*2) ellipse (0.3 and 0.3);
		\draw (1,1.732*2) node {$R_2$};
		\draw (-8.5,-1.732*3) node {$Y^1_-$};
		\draw (8.5,-1.732*3) node {$Y^1_+$};
		\draw (-4, -4*1.732) node[below] {$(\beta_{g-1}^0)^{\perp}$};
		\draw (-8, -4*1.732) node[below] {$(\beta_{g-1}^{\max})^{\perp}$};
		\draw (4, -4*1.732) node[below] {$(\beta_{g+1}^0)^{\perp}$};
		\draw (8, -4*1.732) node[below] {$(\beta_{g+1}^{\max})^{\perp}$};
	\end{tikzpicture}
	\caption{The various regions in our proof of Lemma~\ref{lem:KeyCleanLemmaRephrase} in Type $\tA_2$.} \label{fig:tA2}
\end{figure}
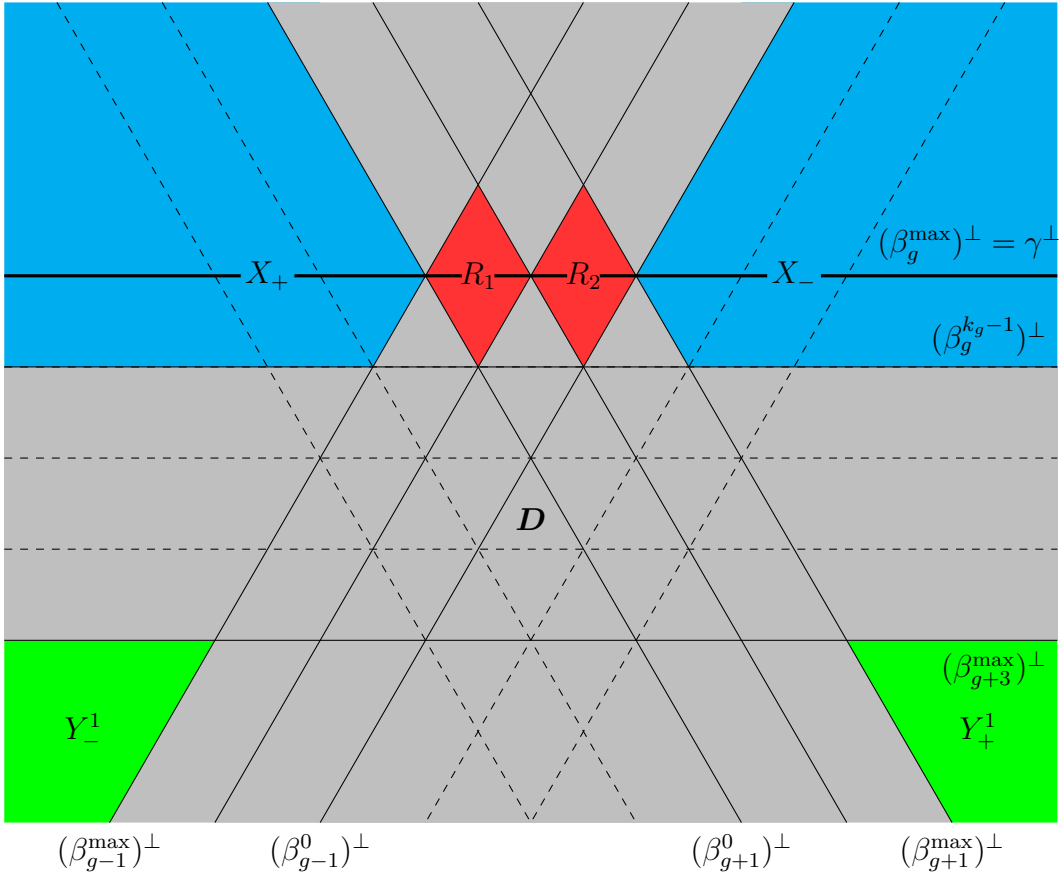

We first discuss the regions $X_{\pm}$ (blue).
\begin{lemma} \label{lem:RegionX}
	Let $X_{\pm}$ be the (unbounded) polyhedron cut by the following inequalities: 
	\[\langle \beta_{g}^{k-1}, -\rangle < 0,~ \langle \beta_{g \mp 1}^{\max}, -\rangle < 0,\text{ and }\langle \beta_{g \pm 1}^0, -\rangle > 0.\]
	If $\Omega \subseteq X_{\pm}$, then $\Omega$ is safe. 
\end{lemma}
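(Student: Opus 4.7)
The plan is to establish safety of $\Omega \subseteq X_+$ by showing that $\gamma^\perp$ passes through the interior of $\Omega$; the case $\Omega \subseteq X_-$ follows by the obvious left-right symmetry.

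The first step is to classify which $J'$-hyperplanes actually meet the interior of $X_+$. Since $J' \subseteq \pos$, every such hyperplane has the form $(\beta_h^j)^\perp$ for some $h \in \{g-2, g-1, g, g+1, g+2, g+3\}$ and $0 \leq j \leq k_h$. Using the three defining inequalities of $X_+$ together with $\langle \delta, -\rangle > 0$ (which holds on $\Omega \cap $ Tits cone, nonempty by the running WLOG), I will show that only the family $h = g-2$ contributes. Writing $\beta_g^j = \beta_g^{k-1} + (j-k+1)\delta$ forces $\langle \beta_g^j, -\rangle < 0$ on $X_+$ for $0 \leq j \leq k-1$; the identity $\beta_{g+3}^0 = \delta - \beta_g^0$ then gives $\langle \beta_{g+3}^j, -\rangle > 0$ on $X_+$; the defining inequality on $\beta_{g\pm 1}^{\max}$ combined with Lemma~\ref{lem:TightBound} (giving $\beta_{g\pm 1}^{\max} = \beta_{g\pm 1}^{k-r}$) rules out the families $h = g\pm 1$; and $\beta_{g+2}^j = (j+1)\delta - \beta_{g-1}^0$ together with $\langle \beta_{g-1}^0, -\rangle < -(k-r)$ on $X_+$ yields $\langle \beta_{g+2}^j, -\rangle > 0$. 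The survivors are the hyperplanes $(\beta_{g-2}^j)^\perp$ for $0 \leq j \leq k_{g-2}$; since $\beta_{g-2}^j = (j+1)\delta - \beta_{g+1}^0$, they are all parallel to the bounding hyperplane $(\beta_{g+1}^0)^\perp$ of $X_+$.

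With the relevant hyperplanes in hand, introduce affine coordinates $u = \langle \beta_{g-1}^0, -\rangle + (k-r)$ and $v = \langle \beta_{g+1}^0, -\rangle$ on the slice $\{\langle \delta, -\rangle = 1\}$. The identity $\beta_g^0 = \beta_{g-1}^0 + \beta_{g+1}^0$ makes $X_+$ become $\{u < 0,\ v > 0,\ u + v < 1 - r\}$; $\gamma^\perp$ becomes the line $\{u + v = -r\}$; and the family $(\beta_{g-2}^j)^\perp$ becomes the pencil $\{v = j+1\}$. Thus $\Omega$ is a strip $\{v_1 < v < v_2\} \cap X_+$ for some $v_1 < v_2$ in $\{0, 1, \ldots, 1 + k_{g-2}\} \cup \{\infty\}$. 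For any $v_0 \in (v_1, v_2)$, the point $(u_0, v_0) = (-r - v_0, v_0)$ satisfies $u_0 + v_0 = -r$ (on $\gamma^\perp$), $u_0 = -r - v_0 < 0$, $v_0 > 0$, and $u_0 + v_0 = -r < 1 - r$, so it lies in $\Omega \cap \gamma^\perp$. Hence $\gamma^\perp$ passes through the interior of $\Omega$, and $\Omega$ is safe.

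The main obstacle is the bookkeeping in the first step: each of the three defining inequalities of $X_+$ must be carefully applied to each of the six positive-root families, with the identities $\beta_h^0 + \beta_{h+3}^0 = \delta$ and $\beta_g^0 = \beta_{g-1}^0 + \beta_{g+1}^0$ used fluently. Once the interior-crossing family is identified as $\{(\beta_{g-2}^j)^\perp\}$, the rest is a short coordinate computation.
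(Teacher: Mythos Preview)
Your approach is essentially the paper's: identify which $J'$-hyperplanes cross $X_+$ (only the family $(\beta_{g-2}^j)^\perp$), observe that these are parallel to the boundary $(\beta_{g+1}^0)^\perp$, and check that $\gamma^\perp$ passes through each resulting strip (and the final unbounded wedge). Your coordinate version is more explicit than the paper's picture-based argument, but the structure is identical.

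Two small slips to fix. First, the identity is $\beta_{g-1}^0+\beta_{g+1}^0=\beta_g^{\,r}$, not $\beta_g^{\,0}$ (this is the paper's Lemma with $a=b=0$); correcting this gives $X_+^1=\{u<0,\ v>0,\ u+v<1\}$ and $\gamma^\perp=\{u+v=0\}$, so your check goes through with $u_0=-v_0$ rather than $-r-v_0$. Second, your sentence ``the defining inequality on $\beta_{g\mp 1}^{\max}$ \dots\ rules out the families $h=g\pm 1$'' handles $h=g-1$ but not $h=g+1$; for the latter you should use the defining inequality $\langle\beta_{g+1}^0,-\rangle>0$ together with $\langle\beta_{g+1}^j,-\rangle=\langle\beta_{g+1}^0,-\rangle+j>0$ on the slice. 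Neither slip breaks the argument.
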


\begin{proof}
	Without loss of generality, we take the $\pm$ sign to be $+$.
	
	The Coxeter hyperplanes crossing the interior of $X_+$ are dual to the roots $\{ \beta_{g-2}^j: j \geq 0 \}$, $\{ \beta_{g-1}^j: j>k_{g-1} \}$ and $\{ \beta_{g}^j: j> k-1 \}$.
	The latter two sets of roots are not in $J'$. 
	So the only $J'$ hyperplanes dividing up $X_+$ are $(\beta_{g-2}^j)^{\perp}$ for $0 \leq j \leq k_{g-2}$ (dashed in the figure). 
	These divide $X$ into parallel strips, and one cone with a $60^{\circ}$ angle,  and $\gamma^{\perp}$ passes through the interior of all of them, so $\Omega$ is safe.
\end{proof}

We next discuss the $k-r$ rhombi $R_a$ (red): 
\begin{lemma} \label{lem:RegionR}
	Let 
	\[R_a =  \{ \theta\in\Vd: \langle \beta_{g-1}^{a-1}, \theta \rangle > 0 > \langle \beta_{g-1}^a,\theta \rangle,\ \langle \beta_{g+1}^{k-r-a}, \theta \rangle > 0 > \langle \beta_{g+1}^{k-r-a+1}, \theta \rangle \} .\]
	for $1 \leq a \leq k-r$. If $\Omega=R_a$, 
	then $\Omega$ is safe.
\end{lemma}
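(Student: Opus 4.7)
The plan is to verify case (2) of safety directly: show that $\gamma^\perp$ passes through the interior of $R_a$.

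First, I record two decompositions of $\gamma$ coming from the rule $\beta_{g-1}^i + \beta_{g+1}^j = \beta_g^{i+j+r}$:
\[ \gamma = \beta_{g-1}^{a-1} + \beta_{g+1}^{k-r-a+1} = \beta_{g-1}^a + \beta_{g+1}^{k-r-a}. \]
All four summands lie in $J'$: by Lemma~\ref{lem:TightBound}, $k_{g-1} = k_{g+1} = k-r$, and since $1 \leq a \leq k-r$ we have $a-1, a, k-r-a, k-r-a+1$ all in $[0,k-r]$. Unfortunately, by the defining inequalities of $R_a$, each of these decompositions has one positive summand and one negative summand on $R_a$ (i.e.\ $\beta_{g-1}^{a-1}>0>\beta_{g+1}^{k-r-a+1}$ and $\beta_{g-1}^a<0<\beta_{g+1}^{k-r-a}$). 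Consequently, Lemma~\ref{lem:EasyWedges} does not directly produce case~(1) safety, and I am forced to prove that $\gamma^\perp$ cuts the interior of $R_a$.

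Using the second decomposition $\gamma = \beta_{g-1}^a + \beta_{g+1}^{k-r-a}$, I will exhibit points of $R_a$ on which $\langle \gamma,\cdot\rangle$ takes each sign. As one approaches the bounding facet $\{\langle\beta_{g-1}^a,\theta\rangle=0\}$ from inside $R_a$ (while keeping $\langle\beta_{g+1}^{k-r-a},\theta\rangle$ bounded away from zero), the sign of $\langle\gamma,\theta\rangle$ is governed by the positive term $\langle\beta_{g+1}^{k-r-a},\theta\rangle>0$. Symmetrically, approaching the facet $\{\langle\beta_{g+1}^{k-r-a},\theta\rangle=0\}$ gives $\langle\gamma,\theta\rangle\approx\langle\beta_{g-1}^a,\theta\rangle<0$. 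Since $R_a$ is a nonempty open convex cone, convexity and the intermediate value theorem force $\gamma^\perp$ to meet the interior of $R_a$, which puts $\Omega$ in case~(2) of safety.

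The main obstacle is confirming that both facet neighborhoods above are genuinely accessible from inside $R_a$, i.e.\ that the two parallel pairs of bounding hyperplanes genuinely cut out a nonempty open three-dimensional region and that the two named facets are nondegenerate. This reduces to a routine linear-independence check: $\beta_{g-1}^0$, $\beta_{g+1}^0$, and $\delta$ are linearly independent in the three-dimensional ambient space of $\tA_2$ (easily verified in a concrete realization, e.g.\ for $g=2$ where they become $\alpha_1,\alpha_2,\alpha_1+\alpha_2+\alpha_3$). Given this, the four defining linear functionals are the images under an invertible change of basis of $\beta_{g-1}^{a-1}, \beta_{g-1}^{a-1}+\delta, \beta_{g+1}^{k-r-a}, \beta_{g+1}^{k-r-a}+\delta$, and the constraints $\beta_{g-1}^{a-1}>0>\beta_{g-1}^{a-1}+\delta$ and $\beta_{g+1}^{k-r-a}>0>\beta_{g+1}^{k-r-a}+\delta$ are simultaneously satisfiable (both force $\langle\delta,\theta\rangle<0$ and are otherwise independent), so $R_a$ is nonempty and each of its two relevant facets has nonempty relative interior.
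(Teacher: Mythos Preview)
Your proof is correct and follows the same strategy as the paper's: verifying case~(2) of safety by showing that $\gamma^\perp$ meets the interior of $R_a$. The paper's argument is a single sentence---``The only Coxeter hyperplane which passes through the interior of $R_a$ is $\gamma^{\perp}$''---relying on the geometry visible in Figure~\ref{fig:tA2}. Your algebraic verification via the decomposition $\gamma = \beta_{g-1}^a + \beta_{g+1}^{k-r-a}$ and the sign analysis near the two opposite facets is a more explicit version of the same observation; it has the advantage of not appealing to the picture.

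One minor remark: your final paragraph correctly observes that the four defining inequalities force $\langle\delta,\theta\rangle<0$, i.e.\ $R_a$ lies entirely in the \emph{negative} Tits cone. This is at odds with the paper's standing convention that $\Omega$ meets the Tits cone and with Figure~\ref{fig:tA2}, which depicts the rhombi in the slice $\langle\delta,\theta\rangle=1$. In other words, the inequality signs in the lemma statement appear to be reversed relative to the figure; your argument (like the paper's) works verbatim with either orientation, since $\gamma^\perp$ cuts a convex cone if and only if it cuts its negative. Your nonemptiness discussion is also more than strictly needed: if $R_a=\emptyset$ then the hypothesis $\Omega=R_a$ is vacuous and there is nothing to prove.
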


\begin{proof}
	The only Coxeter hyperplane which passes through the interior of $R_a$ is $\gamma^{\perp}$, so $\gamma$ passes through the interior of $\Omega$, as desired.
\end{proof}

Finally, we turn to the case where $\Omega$ lies in the region of the figure labeled $Y^1_{\pm}$ (green), bounded by $(\beta_{g+3}^{\max})^{\perp}$ and $(\beta_{g \pm 1}^{\max})^{\perp}$. 
This case is tricky to discuss, since it is the one case in which we need to think about points on the negative side of $\delta^{\perp}$. 
Thus, we need to carefully distinguish between $3$-dimensional cones, and their $2$-dimensional intersections with $\{ \theta : \langle \delta, \theta \rangle = 1 \}$. 
To this end, we make the following definitions. Let $\Omega$ be the three dimensional cone, in the central $J'$-hyperplane arrangement, for which we are trying to verify Lemma~\ref{lem:KeyCleanLemmaRephrase}. 
Let $\Omega^1 = \Omega \cap  \{ \theta : \langle \delta, \theta \rangle = 1 \}$. 

Fortunately, in this case, $Y^1_{\pm}$ and $\Omega^1$ are the same thing, as verified by the following lemma:

\begin{lemma} \label{lem:RegionYPart1}
	With the above notation,  there are no $J'$-hyperplanes meeting the interior of $Y_{\pm}^1$.
	Thus, if $\Omega^1$ is in $Y_{\pm}^1$, then $\Omega^1=Y^1_{\pm}$.
\end{lemma}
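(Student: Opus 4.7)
The plan is to enumerate the positive roots whose hyperplanes could possibly meet the interior of $Y^1_{\pm}$ and rule each one out by a direct sign analysis using the two bounding inequalities of the region. The arguments for $Y^1_+$ and $Y^1_-$ are formally identical after interchanging the roles of $\beta_{g-1}\leftrightarrow\beta_{g+1}$, so I treat only $Y^1_+$. Every $J'$-hyperplane has the form $(\beta_h^j)^{\perp}$ with $h\in\{1,\dots,6\}$ and $0 \le j \le k_h$, and these fall into three pairs of mutually parallel families: the horizontal pair $\beta_g,\beta_{g+3}$, and the two diagonal pairs $\beta_{g-1},\beta_{g+2}$ and $\beta_{g+1},\beta_{g-2}$. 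Working on the affine slice $\langle \delta,\theta\rangle = 1$, I will handle each pair separately.

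The easier half handles the families parallel to the bounding hyperplanes. Parametrise the horizontal pencil by $c' := \langle\beta_g^0,\theta\rangle$; then $(\beta_g^j)^{\perp}$ lies at $c' = -j \le 0$ and $(\beta_{g+3}^j)^{\perp}$ at $c' = 1+j \le 1 + k_{g+3}$, while $\langle\beta_{g+3}^{\max},\theta\rangle<0$ forces $c' > 1 + k_{g+3}$ on $Y^1_+$. An identical argument with $c := \langle\beta_{g+1}^0,\theta\rangle$ and the bound $c < -(k-r)$ coming from $\langle\beta_{g+1}^{\max},\theta\rangle < 0$ handles the $\beta_{g+1}$ and $\beta_{g-2}$ families.

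The main obstacle is the remaining pair $\beta_{g-1}$ and $\beta_{g+2}$, whose hyperplanes are parallel to neither bounding hyperplane and so could a priori cut through $Y^1_+$. Here I combine the two bounding inequalities through the identity $\beta_{g-1}^0 + \beta_{g+1}^0 = \beta_g^r$ to obtain the crucial lower bound
\[
\langle \beta_{g-1}^0, \theta\rangle \;=\; \langle \beta_g^0,\theta\rangle + r - \langle \beta_{g+1}^0,\theta\rangle \;>\; (1+k_{g+3}) + r + (k-r) \;=\; 1 + k_{g+3} + k
\]
for every $\theta \in Y^1_+$. Setting $c'' := \langle\beta_{g-1}^0,\theta\rangle$, the line $(\beta_{g-1}^j)^{\perp}$ sits at $c'' = -j \le 0$ and the line $(\beta_{g+2}^j)^{\perp}$ at $c'' = 1+j \le 1+k_{g+2} \le 1+k$ by Lemma~\ref{lem:FormOfIdeal}; both values violate $c'' > 1 + k_{g+3} + k \ge 1+k$, so neither family produces a line meeting $Y^1_+$.

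The second sentence of the lemma is then immediate: $Y^1_{\pm}$ is already a region of the $K$-arrangement, and what we have just shown is that passing to the finer $J'$-arrangement does not subdivide it, so any $J'$-region $\Omega^1$ contained in $Y^1_{\pm}$ must coincide with $Y^1_{\pm}$. The only book-keeping in the whole argument is tracking signs and the integer offsets contributed by the imaginary root $\delta$; no input beyond Lemma~\ref{lem:FormOfIdeal} and the addition identity $\beta_{g-1}^a+\beta_{g+1}^b = \beta_g^{a+b+r}$ is needed.
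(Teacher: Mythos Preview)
Your proof is correct and follows essentially the same approach as the paper: both reduce to checking that the $\beta_{g+2}$-hyperplanes in $J'$ miss $Y^1_+$, via the inequality $k_{g+2}\le k+k_{g+3}$ (the paper phrases this as $\ell=k_{g+1}+k_{g+3}+r\ge k_{g+2}$, which is the same thing since $k_{g+1}=k-r$). The only difference is presentational: the paper uses the geometric observation that $(\beta_{g+2}^{\ell})^{\perp}$ passes through the corner $(\beta_{g+1}^{\max})^{\perp}\cap(\beta_{g+3}^{\max})^{\perp}$ to read off the threshold $\ell$, while you derive it algebraically from the two bounding inequalities via $\beta_{g-1}^0+\beta_{g+1}^0=\beta_g^r$; and the paper, like you, leaves the verification for small $k$ (where one needs $k_{g+3}\ge 0$) to the reader.
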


\begin{proof}
	Without loss of generality, we assume that the $\pm$ sign is $+$.
	The Coxeter hyperplanes that cross the interior of $Y_+^1$  are dual to the roots $\{ \beta_{g+1}^j: j>k_{g+1} \}$, $\{ \beta_{g+2}^j: j>\ell \}$ and  $\{ \beta_{g+3}^j: j>k_{g+3} \}$ where $\ell = k_{g+1}+k_{g+3}+r$. 
	The significance of the bound $\ell$ is that $\beta_{g+2}^{\ell} = \beta_{g+1}^{\max} + \beta_{g+3}^{\max}$, so that $(\beta_{g+2}^{\ell})^{\perp}$ passes through the corner $(\beta_{g+1}^{\max})^{\perp}  \cap (\beta_{g+3}^{\max})^{\perp}$ of $Y_+^1$.
	
	Clearly, the roots  $\{ \beta_{g+1}^j: j>k_{g+1} \}$ and $\{ \beta_{g+3}^j: j>k_{g+3} \}$ are not in $J'$.
	It remains to verify that $\ell \geq k_{g+2}$, in other words, that $k_{g+1} + k_{g+3} + r \geq k_{g+2}$.
	Since each of $k_{g+1}$, $k_{g+2}$, $k_{g+3}$ is either $k$ or $k-1$ and $r$ is either $0$ or $1$, this is immediate for $k \geq 2$.
	We leave the finitely many cases where $k \leq 1$ to the reader.
\end{proof}

\begin{lemma} \label{lem:RegionYPart2}
	If $\Omega^1 \subseteq Y_{\pm}^1$, then $\gamma^{\perp}$ meets the interior of $\Omega$, so $\Omega$ is safe.
\end{lemma}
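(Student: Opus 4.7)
The plan is to exhibit an explicit point in $\Omega \cap \gamma^\perp$. By Lemma~\ref{lem:RegionYPart1}, the positive slice $\Omega \cap \{\langle\delta,\cdot\rangle=1\}$ equals $Y^1_\pm$, which lies entirely on the positive side of $\gamma^\perp$ (for $Y^1_+$, one verifies this by writing $\gamma = c_1 \beta_g^{k-1} + c_2\beta_{g+3}^{\max}$ with $c_1 > c_2 > 0$ and using the signs $\beta_g^{k-1}>0$ and $\beta_{g+3}^{\max}<0$ on $Y^1_+$). As noted in the paragraph preceding Lemma~\ref{lem:ParallelPlanes}, $\Omega$ nevertheless extends across $\delta^\perp$: since $\delta \notin J'$, the hyperplane $\delta^\perp$ is not a bounding hyperplane of $\Omega$, and since $Y^1_\pm$ is unbounded in the positive slice, the closure of $\Omega$ meets $\delta^\perp$ in a nonempty open cone. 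So the desired crossing of $\gamma^\perp$ must occur on the negative side of $\delta^\perp$.

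To produce such a crossing point, I would reduce to the case $Y^1_+$ by symmetry, and further reduce to $g = 1$ using the $\ZZ/3$ cyclic automorphism of the $\tA_2$ Dynkin diagram (with the parallel case $g = 2$ handled by the same method). Writing $\theta\in\Vd$ via coordinates $u_i = \langle\alpha_i,\theta\rangle$ and $p = u_1 + u_2 + u_3 = \langle\delta,\theta\rangle$, the explicit candidate I would propose is
\[
\theta \;=\; (k,\ -3-k-k_5,\ k_5+2).
\]
One computes directly that $p = -1$, so $\theta$ is on the negative slice, and $\langle\gamma,\theta\rangle = u_1 + kp = 0$, so $\theta$ lies on $\gamma^\perp$.

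The remaining task is to verify that $\theta$ lies in the interior of $\Omega$, i.e., that for every $\beta \in J'$ the sign of $\langle\beta,\theta\rangle$ agrees with its sign on $Y^1_+$. Using $\langle\beta_i^j,\theta\rangle = \langle\beta_i^0,\theta\rangle + jp$, this reduces to six finite sequences of strict inequalities, indexed by $i = 1,\ldots,6$ and $0 \le j \le k_i$, each handled directly using the bounds $k_h \in \{k-1,k\}$ and $k_{g \pm 1} = k-r$ supplied by Lemmas~\ref{lem:FormOfIdeal} and~\ref{lem:TightBound}. The coordinates of $\theta$ above are engineered precisely so that every such inequality holds strictly, making $\theta$ an interior point of $\Omega$. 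The main obstacle is bookkeeping: one must first read off the signs of every $\beta \in J'$ on $Y^1_+$ (using that $Y^1_+$ sits at a specific corner of the $J'$-arrangement, via the structural analysis in the proof of Lemma~\ref{lem:RegionYPart1}) and then match them against the explicit candidate $\theta$.
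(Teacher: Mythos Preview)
Your approach is correct but genuinely different from the paper's. The paper argues geometrically: it identifies $\Omega$ explicitly as a four-faced cone bounded by $(\beta_{g+3}^{\max})^\perp$, $(\beta_{g+1}^{\max})^\perp$, $(\beta_g^{k_g-1})^\perp$, and $(\beta_{g-2}^{\max})^\perp$, observes that the extreme ray $\rho$ along $(\beta_{g+3}^{\max})^\perp \cap (\beta_g^{k-1})^\perp$ lies in $\delta^\perp$, and then notes that $\gamma^\perp$ contains $\rho$ (since $\gamma$ is in the span of $\beta_{g+3}^{\max}$ and $\beta_g^{k-1}$) and is sandwiched between those two facets. This immediately forces $\gamma^\perp$ through the interior of $\Omega$, without ever writing down coordinates. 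Your approach instead produces an explicit witness $\theta$ in the negative slice and checks the sign of $\langle\beta,\theta\rangle$ against the sign on $Y^1_+$ for each of the six families $\beta_i^j$. Both work; the paper's argument is coordinate-free and makes the structure of $\Omega$ transparent, while yours trades that insight for a direct verification.

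Two small points. First, your justification in the opening paragraph that $Y^1_+$ lies on the positive side of $\gamma^\perp$ via ``$\gamma = c_1\beta_g^{k-1} + c_2\beta_{g+3}^{\max}$ with $c_1>c_2>0$'' is not enough: knowing $c_1>c_2$ and the signs of the two pairings does not control the sign of the combination without magnitude information. The cleaner argument is simply $\langle\gamma,\cdot\rangle = \langle\beta_g^{k-1},\cdot\rangle + \langle\delta,\cdot\rangle > 0$ on the positive slice. (This claim is only motivational anyway, so the main argument survives.) Second, you defer the actual sign-checking for $\theta\in\Omega$ to ``bookkeeping,'' and while your candidate does in fact satisfy all six families of inequalities for $g=1$ (using $k_2=k_6=k-1$ from Lemma~\ref{lem:TightBound} and $k_h\in\{k-1,k\}$ from Lemma~\ref{lem:FormOfIdeal}), it would be worth writing out at least the signs required on $Y^1_+$ --- namely $\beta_1,\beta_5,\beta_6$ positive and $\beta_2,\beta_3,\beta_4$ negative --- and noting that the argument for $\beta_3=\beta_{g+2}$ relies on the inequality $k_{g+2}\le\ell$ established inside Lemma~\ref{lem:RegionYPart1}.
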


\begin{proof}
	The cone of the $J'$-hyperplane arrangement containing $Y^1_{+}$ is bounded by $(\beta_{g+3}^{\max})^{\perp}$, $(\beta_{g+1}^{\max})^{\perp}$, $(\beta_g^{k_g-1})^{\perp}$ and $(\beta_{g-2}^{\max})^{\perp}$.
	So this cone must be $\Omega$.
	Let $\rho$ be the ray of $\Omega$ along the line  $(\beta_{g+3}^{\max})^{\perp} \cap (\beta_{g}^{k_{g-1}})^{\perp}$. The ray $\rho$ is in $\delta^{\perp}$, and hence corresponds to the point at infinity on the far right of Figure~\ref{fig:tA2}.
	In order to depict $\Omega$ more clearly, we slice the three dimensional hyperplane arrangement along an affine plane $H$ transverse to $\rho$, and use geometric language in the slice $H$. We depict the slice with $H$ in Figure~\ref{fig:tA2H}. The top half of the figure (shaded primarily in gray) is the Tits cone. The region $\Omega$ (green) extends into both the Tits cone and the negative Tits cone. To help orient the reader, we have also drawn $X_-$ (blue), which also extends into  the Tits cone and the negative Tits cone. 
	
	So $\Omega^1$ meets $H$ along a line segment, which we have drawn as a thick line, and $\Omega$ meets $H$ in a quadrilateral. The ray $\rho$ meets $H$ at a vertex of this quadrilateral.
	The hyperplane $\gamma^{\perp}=(\beta_g^{\max})^{\perp}$ passes through the ray $\rho$ of $\Omega$ and lies between the bounding hyperplanes  $(\beta_{g+3}^{\max})^{\perp}$ and  $(\beta_g^{k_g-1})^{\perp}$ of $\Omega$. 
	So $\gamma^{\perp}$ passes through the interior of $\Omega$, as promised. 
\end{proof}

\usetikzlibrary{math}

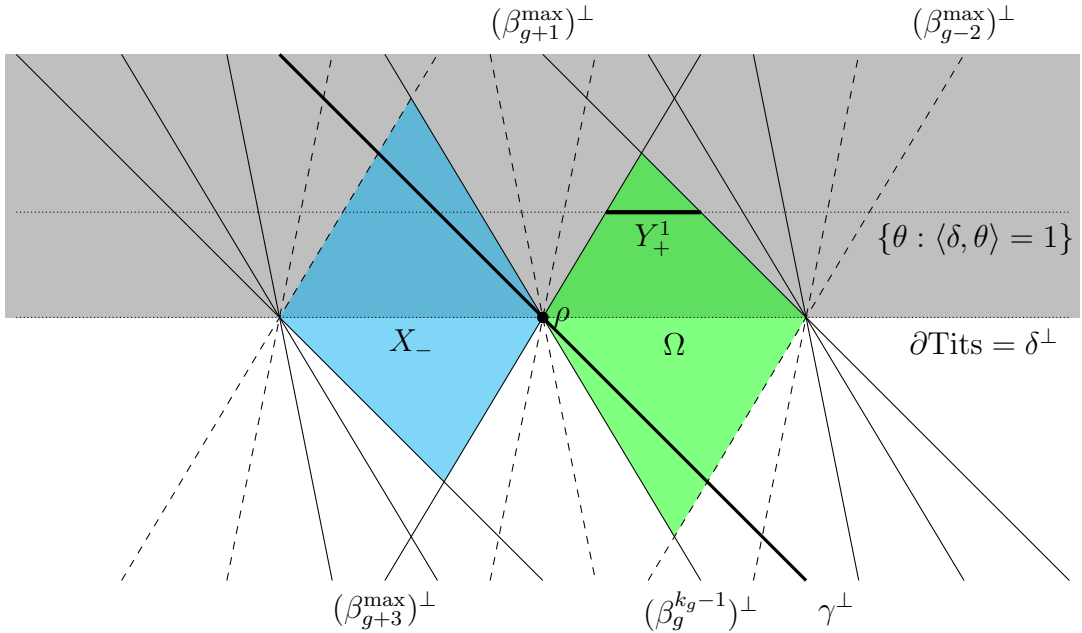
\begin{figure}[h]
	\begin{tikzpicture}[scale=0.7]
		\fill[lightgray] (-10.2,0) -- (-10.2,5) -- (10.2,5) -- (10.2,0) -- cycle;
		\fill[green, opacity=0.5] (0,0) -- (1.875,3.125) -- (5,0) -- (2.5,-4.167) -- cycle;
		\fill[cyan, opacity=0.5] (-5,0) -- (-2.5, 4.167) -- (0,0) -- (-1.875,-3.125) -- cycle;
		\draw[densely dotted] (-10,2) -- (10,2) node[below left] {$\{ \theta : \langle \delta, \theta \rangle = 1 \} \!\!\!$ };
		\draw[densely dotted] (-10,0) -- (10,0) node[below left] {$\partial \text{Tits} = \delta^{\perp}$};
		\draw (3,5) -- (-3,-5)   node[below] {$(\beta_{g+3}^{\max})^{\perp}$};
		\draw[very thick] (-5,5) -- (5,-5) node[below right] {$\gamma^{\perp}$};
		\draw (-3,5) -- (3,-5)  node[below] {$(\beta_g^{k_g-1})^{\perp}$};
		\draw[dashed] (-1,5) -- (1,-5);
		\draw[dashed] (-1,-5) -- (1,5);
		\draw[dashed] (-6,-5) -- (-4,5);
		\draw[dashed] (-8,-5) -- (-2,5);
		\draw (-10,5) -- (0,-5);
		\draw (-8,5) -- (-2,-5);
		\draw (-6,5) -- (-4,-5);
		\draw[dashed] (4,-5) -- (6,5);
		\draw[dashed] (2,-5) -- (8,5) node[above] {$(\beta_{g-2}^{\max})^{\perp}$};
		\draw (10,-5) -- (0,5) node[above] {$(\beta_{g+1}^{\max})^{\perp}$};
		\draw (2,5) -- (8,-5);
		\draw (4,5) -- (6,-5) ;
		\draw[ultra thick] (1.2,2) -- (3,2);
		\draw (2.5,-0.5) node {$\Omega$};
		\draw (-2.5,-0.5) node {$X_-$};
		\draw (2.1,1.5) node {$Y^1_+$};
		\draw[fill=black] (0,0) ellipse (0.1 and 0.1) node[right] {$\rho$};
	\end{tikzpicture}
	\caption{A different slice through the $\tA_2$ hyperplane arrangement.} \label{fig:tA2H} 
\end{figure}

\section{Folding} \label{sec:Folding} 
Let $(\Phi,\Pi,V)$ and $(\uPhi,\uPi,\uV)$ be the geometric realizations of root systems $\Phi$ and $\uPhi$. Assume we are given linear maps $V\xrightarrow{i} \uV \xrightarrow{p} V$ such that 
$i$ is injective and $p$ is surjective.
We can think of $V$ as a subspace of $\uV$, and $p$ as a projection of $\uV$ onto that subspace. (We do not require that $p\circ i = \mathrm{id}_V$, though this will be the case for our examples.) We ask for the following conditions:

\vspace{.5em}
(\textbf{Condition 1}) The image $p(\uPi)$ is $\Pi$. 

(\textbf{Condition 2}) The image $p(\uPhi)$ is $\Phi$.

\vspace{.5em}
\noindent When these first two conditions are satisfied, we will write $f: \uPhi \to \Phi$ for the restriction of $p$ to $\uPhi$. Note that in this case $f(\uPhi_+) = \pos$.

\vspace{.5em}
(\textbf{Condition 3}) If $\alpha\in \Phi$, then $i(\alpha)$ is contained in the nonnegative span $\Spanp f^{-1}\{\alpha\}$.

\vspace{.5em}
Examples of maps satisfying the conditions above come naturally from \newword{folding} Dynkin diagrams. Namely, let $(\uPhi,\uPi,\uV)$ be any geometrically realized root system. Choose a permutation $\sigma$ of the base $\uPi$ which preserves the pairings $(\alpha_i, \alpha_j)$, and assume that within each orbit of $\sigma$ on $\uPi$, the simple roots are pairwise orthogonal. Let the order of $\sigma$ be $m$. Then define $V$ to be the subspace of $\uV$ fixed by $\sigma$, and define the retraction $p: \uV \to V$ by
\[ p(\alpha) \coloneqq \frac{1}{m}\sum_{k=0}^{m-1} \sigma^k(\alpha). \]
We endow $V$ with the bilinear form which is $m$ times the restriction of the form on $\uV$. We define $\Pi$ to be the image of $\uPi$ under $p$. It turns out that $\Pi$ is the base of a unique root system $\Phi$ geometrically realized in $V$. 
If we take $i$ to be the inclusion $V\hookrightarrow \uV$, then the maps $i,p$ and the root systems $\Phi,\uPhi$ \textit{almost} satisfy the three conditions above.
For a general symmetry of a Dynkin diagram,  the averaging map above will satisfy Conditions 1 and 3, but only a weaker form of Condition 2: $p(\uPhi) \supseteq \Phi$. 

\begin{remark}
	For the expert reader, the trouble is that $f$ may send real roots of $\uPhi$ to imaginary roots of $\Phi$, which, recall, are not elements of $\Phi$ in our setup. For the non-expert, see \cite{StembridgeNote} for more details (though note that the map used for $p$ there is our $i^{*}$).
	
	As an example, let $\uPhi$ have type $\tA_5$, so the extended Dynkin diagram is a hexagon, and let $\sigma$ be the rotation of that hexagon by three positions.
	Then $\Phi$ will have extended Dynkin diagram a triangle, and thus should have type $\tA_2$. Since this is a folding, the induced maps $i,p$ satisfy Conditions 1 and 3.
	However, $\ualpha_1+\ualpha_2+\ualpha_3$ is a real root in $\uPhi$ and $f(\ualpha_1+\ualpha_2+\ualpha_3)$ is the imaginary root of $\Phi$, so Condition 2 is not satisfied.
\end{remark} 

There are two cases of interest where Condition 2 is always satisfied:
\begin{itemize}
	\item If $\uPhi$ is finite, or
	\item If $\uPhi$ is untwisted affine, and $\sigma$ fixes $\alpha_0$. In this case $\Phi$ will also be untwisted affine.
\end{itemize}

The foldings we will consider all fall into one of these two cases. We now list the specific foldings which we need by depicting the induced map $\uPi \to \Pi$ as a map between Dynkin diagrams:

\tikzset{/Dynkin diagram/fold style/.style={opacity=0}}
\tikzset{/Dynkin diagram/fold radius = .35cm}
\begin{figure}[H]
	\begin{tabular}{|@{\hskip2\tabcolsep}c c@{\hskip2\tabcolsep}|@{\hskip2\tabcolsep}c c@{\hskip2\tabcolsep}|@{\hskip2\tabcolsep}c c@{\hskip2\tabcolsep}|@{\hskip2\tabcolsep}c c@{\hskip2\tabcolsep}|}
		\hline
		$D_4$ & \dynkin[fold]{D}{4} & $A_5$ & \dynkin[fold]{A}{5} & $\tA_3$ & \dynkin[fold]{A}[1]{3} & $\tD_4$ & \dynkin[ply=3]{D}[1]{4} \\[.4cm]
		& \begin{tikzpicture}
			\draw[->] (0,.5) edge (0,0) (.35cm, .5) edge (.35cm, 0) (.7cm, .5) -- (.7cm, 0);
		\end{tikzpicture}\vspace{-.2cm} & &
		\begin{tikzpicture}
			\draw[->] (0,.5) edge (0,0) (.35cm, .5) edge (.35cm, 0) (.7cm, .5) -- (.7cm, 0);
		\end{tikzpicture} & &\begin{tikzpicture}
			\draw[->] (0,.5) edge (0,0) (.35cm, .5) edge (.35cm, 0) (.7cm, .5) -- (.7cm, 0);
		\end{tikzpicture} & &\begin{tikzpicture}
			\draw[->] (0,.5) edge (0,0) (.35cm, .5) edge (.35cm, 0) (.7cm, .5) -- (.7cm, 0);
		\end{tikzpicture} \\
		$B_3$ & \dynkin{B}{3} & $C_3$ & \dynkin{C}{3} & $\tC_2$ & \dynkin{C}[1]{2} & $\tG_2$ & \dynkin{G}[1]{2}  \\
		\hline
	\end{tabular}
	\caption{The foldings $D_4 \to B_3,\ A_5 \to C_3,\ \tA_3 \to \tC_2,$ and $\tD_4 \to \tG_2$, respectively.}
	\label{fig:folding}
\end{figure}
In each case, we leave it as an exercise for the reader to check that the conditions are met.

With these remarks and definitions out of the way, let $i$, $p$, and $f$ satisfy Conditions 1-3. For instance, we may take a folding from the list above.
We start with some lemmas:
\begin{lemma} \label{lem:PullbackOrderIdeal}
	Let $I$ be an order ideal in the root poset of $\pos$. Then $f^{-1}(I)$ is an order ideal in the root poset of $\uPhi_+$.
\end{lemma}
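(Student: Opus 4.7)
The plan is to unwind the definition of an order ideal and reduce the claim to the linearity of $p$ together with Condition 1.

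Concretely, suppose $\ubeta \in f^{-1}(I)$ and $\ualpha \in \uPhi_+$ with $\ualpha \leq \ubeta$ in the root poset on $\uPhi_+$. I need to show $f(\ualpha) \in I$. By the definition of the root poset, $\ubeta - \ualpha \in \Spanp \uPi$, so I may write $\ubeta - \ualpha = \sum_{i} c_i \ualpha_i$ with $c_i \geq 0$ and $\ualpha_i \in \uPi$. Since $f$ is the restriction of the linear map $p$, we have
\[ f(\ubeta) - f(\ualpha) = p(\ubeta) - p(\ualpha) = \sum_i c_i \, p(\ualpha_i). \]
By Condition 1, $p(\ualpha_i) \in \Pi$ for every $i$, so $f(\ubeta) - f(\ualpha) \in \Spanp \Pi$.

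To interpret this as a root-poset comparison I also need both $f(\ualpha)$ and $f(\ubeta)$ to lie in $\pos$. This follows from Condition 2: since $\ualpha, \ubeta \in \uPhi_+$ and $f(\uPhi_+) = \pos$ (as noted in the paragraph following Condition 2), both images are positive roots. Hence $f(\ualpha) \leq f(\ubeta)$ in the root poset on $\pos$. Because $I$ is an order ideal and $f(\ubeta) \in I$, we conclude $f(\ualpha) \in I$, i.e.\ $\ualpha \in f^{-1}(I)$.

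There is no real obstacle here: the argument is a one-line consequence of linearity of $p$ and $p(\uPi) = \Pi$. The only mild point to notice is that one must invoke Condition 2 to ensure the images $f(\ualpha), f(\ubeta)$ actually lie in $\pos$ so that the root poset on $\pos$ applies to them; with that observed, the proof is essentially immediate.
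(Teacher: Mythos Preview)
Your proof is correct and follows essentially the same argument as the paper: write $\ubeta-\ualpha$ as a nonnegative combination of simple roots of $\uPhi$, apply the linear map $p$ together with Condition~1 to get $f(\ubeta)-f(\ualpha)\in\Spanp\Pi$, and use Condition~2 to ensure the images lie in $\pos$ so that the order ideal property of $I$ applies. The only difference is notational (you use $\ualpha$ where the paper uses $\ugamma$), and your explicit remark about needing Condition~2 to place the images in $\pos$ is, if anything, slightly more careful than the paper's version.
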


\begin{proof}
	Let $\ubeta \in f^{-1}(I)$ and let $\ugamma$ be another root of $\uPhi^+$ with $\ubeta \succ \ugamma$.
	We need to show that $\ugamma \in f^{-1}(I)$.
	Put $\beta= f(\ubeta)$ and $\gamma = f(\ugamma)$.
	
	The condition $\ubeta \succ \ugamma$ means that $\ubeta - \ugamma$ is a non-negative combination of the simple roots $\ualpha_i$. 
	Applying the linear map $p$, and applying Conditions 1 and 2, we see that $\beta - \gamma$ is a non-negative combination of the simple roots $\alpha_i$, so $\beta \succ \gamma$.
	Since $I$ is an order ideal, we deduce that $\gamma \in I$. 
	Then $\ugamma \in f^{-1}(I)$, as required.
\end{proof}

\begin{lemma} \label{lem:PullbackClosed}
	Let $I$ be an order ideal in the root poset of $\pos$ and let $B$ be biclosed in $I$. Then $f^{-1}(B)$ is biclosed in $f^{-1}(I)$.
\end{lemma}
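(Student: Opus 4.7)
The plan is to push the biclosure conditions through the linear map $p$. Since $f$ is the restriction to $\uPhi$ of the linear map $p : \uV \to V$, it carries positive linear combinations of roots to positive linear combinations of their images. This makes both halves of the biclosed condition straightforward to transfer.

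First I would observe that $f^{-1}(I) \subseteq \uPhi_+$. Indeed, if $\ualpha \in \uPhi_+$ then $\ualpha \in \Spanp \uPi$, so $f(\ualpha) = p(\ualpha) \in \Spanp(p(\uPi)) = \Spanp \Pi$ by Condition 1, hence $f(\ualpha) \in \pos$; conversely, if $\ualpha \in \uPhi^-$ then $f(\ualpha) \in \Phi^-$, and since $f(\ualpha)$ is a nonzero root it cannot also lie in $\pos$. Thus $f^{-1}(\pos) = \uPhi_+$, and $f^{-1}(I) \subseteq \uPhi_+$.

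Now to check closedness of $f^{-1}(B)$ in $f^{-1}(I)$: suppose $\ualpha, \ubeta \in f^{-1}(B)$ and $\ugamma \in \Spanp(\ualpha, \ubeta) \cap f^{-1}(I)$. Writing $\ugamma = c_1 \ualpha + c_2 \ubeta$ with $c_1, c_2 \geq 0$ and applying the linear map $p$, we get $f(\ugamma) = c_1 f(\ualpha) + c_2 f(\ubeta) \in \Spanp(f(\ualpha), f(\ubeta))$. By hypothesis $f(\ualpha), f(\ubeta) \in B$ and $f(\ugamma) \in I$, so closedness of $B$ in $I$ forces $f(\ugamma) \in B$, i.e.\ $\ugamma \in f^{-1}(B)$. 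The coclosed direction is identical: if $\ualpha, \ubeta \in f^{-1}(I) \setminus f^{-1}(B) = f^{-1}(I \setminus B)$ and $\ugamma \in \Spanp(\ualpha, \ubeta) \cap f^{-1}(I)$, then the same computation gives $f(\ugamma) \in \Spanp(f(\ualpha), f(\ubeta)) \cap I$, and coclosedness of $B$ in $I$ yields $f(\ugamma) \in I \setminus B$, hence $\ugamma \notin f^{-1}(B)$.

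There is no substantial obstacle; the proof is essentially a one-line application of linearity of $p$, together with the preliminary remark that $f^{-1}(I) \subseteq \uPhi_+$ so that we do not accidentally leave the positive roots when unpacking positive linear combinations. The companion result that will require real work is the reverse direction (that \emph{pushing forward} a biclosed set via $f$ is biclosed, or that cleanness transfers under folding); the present lemma is the easy half of the folding correspondence.
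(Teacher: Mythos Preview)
Your proof is correct and is essentially the same argument as the paper's: both push the positive-span relation forward through the linear map $p$ and invoke biclosedness of $B$ in $I$. Your additional remark that $f^{-1}(I) \subseteq \uPhi_+$ is correct but not strictly needed for the lemma, and the paper simply omits it.
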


\begin{proof}
	We will show that $f^{-1}(B)$ is closed in $f^{-1}(I)$; applying the same logic to $I \setminus B$ will show that $f^{-1}(B)$  is co-closed as well.
	Let $\ualpha$ and $\ubeta \in f^{-1}(B)$ and $\ugamma \in f^{-1}(I)$ with $\ugamma$ in the positive space of $\ualpha$ and $\ubeta$; we must show that $\ugamma \in f^{-1}(I)$.
	Set $f(\ualpha) = \alpha$, $f(\ubeta) = \beta$ and $f(\ugamma) = \gamma$.
	Using Condition 2, we have $\alpha$ and $\beta \in B$ and $\gamma \in I$.
	Applying the linear map $p$, we see that $\gamma$ is in the positive span of $\alpha$ and $\beta$. Since $B$ is biclosed in $I$, we deduce that $\gamma \in B$, and thus $\ugamma \in f^{-1}(B)$, as desired.
\end{proof}

\begin{lemma} \label{lem:PushforwardSeparable}
	Let $I$ be an order ideal in the root poset of $\pos$, let $B$ be biclosed in $I$ and suppose that $f^{-1}(B)$ is separable (respectively, weakly separable) in $f^{-1}(I)$. Then $B$ is separable (respectively, weakly separable) in $I$.
\end{lemma}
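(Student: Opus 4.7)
The plan is to pull back the separating data along the dual of $i$. Recall that $i \colon V \hookrightarrow \uV$ is injective; let $i^{\ast} \colon \uV^{\ast} \to V^{\ast}$ denote its dual, so that $\langle i^{\ast}(\utheta), v \rangle = \langle \utheta, i(v) \rangle$ for every $v \in V$. The crucial geometric input is Condition 3, which lets us express $i(\alpha)$ as a nonnegative combination of preimages of $\alpha$ under $f$.

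For the separable case, fix $\utheta \in \uV^{\ast}$ witnessing the separability of $f^{-1}(B)$ in $f^{-1}(I)$, and set $\theta := i^{\ast}(\utheta)$. For any $\alpha \in I$, Condition 3 gives an expression $i(\alpha) = \sum_{j} c_j \ualpha_j$ with $c_j \geq 0$ and $\ualpha_j \in f^{-1}\{\alpha\}$. Since $\alpha \neq 0$ and $i$ is injective, $i(\alpha) \neq 0$, so at least one $c_j$ is strictly positive. If $\alpha \in B$, then every $\ualpha_j$ lies in $f^{-1}(B)$, so $\langle \utheta, \ualpha_j\rangle < 0$, and consequently $\langle \theta, \alpha\rangle = \sum_j c_j \langle \utheta, \ualpha_j\rangle < 0$. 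If instead $\alpha \in I \setminus B$, then every $\ualpha_j$ lies in $f^{-1}(I \setminus B)$ (here we use that $f^{-1}(I \setminus B) = f^{-1}(I) \setminus f^{-1}(B)$), giving $\langle \theta, \alpha\rangle > 0$. Thus $\theta$ separates $B$ from $I \setminus B$.

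For the weakly separable case, suppose $v \in \Spanp(B) \cap \Spanp(I \setminus B)$, and write $v = \sum_{\beta \in B} c_\beta \beta = \sum_{\gamma \in I \setminus B} d_\gamma \gamma$ with nonnegative coefficients. Applying $i$ and invoking Condition 3 for each $\beta$ and each $\gamma$, we see that $i(v)$ lies in both $\Spanp f^{-1}(B)$ and $\Spanp f^{-1}(I \setminus B)$. Weak separability of $f^{-1}(B)$ in $f^{-1}(I)$ then forces $i(v) = 0$, and injectivity of $i$ gives $v = 0$, as required.

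There is no real obstacle here; Condition 3 is formulated precisely so that this pullback argument goes through. The one point to take care of is noting that $f^{-1}$ commutes with complements relative to $f^{-1}(I)$ and that $i(\alpha) \neq 0$ for $\alpha \in \pos$, both of which are immediate.
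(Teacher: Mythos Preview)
Your proof is correct. For the separable case, your argument is essentially the paper's: both set $\theta = i^{\ast}(\utheta)$ and use Condition~3 to expand $i(\alpha)$ as a nonnegative combination of roots in $f^{-1}\{\alpha\}$, all of which lie on the same side of $\utheta$; the paper phrases this via $f^{\ast}(\theta)$ and a generic preimage $\ualpha$, but the computation is the same.

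Your treatment of the weakly separable case, however, genuinely differs. The paper does not argue directly but instead reduces to the separable case by writing $I$ as a rising union of finite order ideals $I_k$, noting that separability and weak separability coincide on each finite $I_k$, and observing that separability on every $I_k$ implies weak separability on $I$. Your argument bypasses this reduction entirely: you push the purported element of $\Spanp(B)\cap\Spanp(I\setminus B)$ through the injective map $i$, use Condition~3 to land in $\Spanp f^{-1}(B)\cap\Spanp f^{-1}(I\setminus B)$, and invoke weak separability upstairs together with injectivity of $i$. This is cleaner and more self-contained; the paper's reduction has the mild advantage of showing that the weakly separable statement is not really an independent claim.
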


\begin{proof}
	We first discuss the relationship between separability and weak separability. If $I$ is finite, then separability and weak separability are the same thing. A general order ideal $I$ can be written as the rising union of finite order ideals: $I = \bigcup I_k$. If $B \cap I_k$ is separable in every $I_k$, then $B$ is weakly separable in $I$. Thus, it is enough to prove the version of the statement with separability. So, assume from now on that $f^{-1}(B)$ is separable  in $f^{-1}(I)$. This means that there is some $\utheta \in \uV^{\ast}$ such that $\langle \utheta, -\rangle$ is negative on $f^{-1}(B)$ and positive on $f^{-1}(I) \setminus f^{-1}(B)$.
	
	%
	
	Define $\theta\coloneqq i^{\ast}(\utheta)$. We claim that $f^{\ast}(\theta)$ also separates $f^{-1}(B)$ from $f^{-1}(I) \setminus f^{-1}(B)$. Indeed, for any $\ualpha\in \uPhi$, we have
	\[ \langle f^{\ast}(\theta), \ualpha\rangle = \langle f^{\ast}\circ i^{\ast}(\utheta), \ualpha\rangle = \langle \utheta, i\circ f(\ualpha)\rangle.  \]
	Applying Condition 3, we conclude that $i\circ f(\ualpha)$ is in the non-negative span of $f^{-1}\{ f(\ualpha) \}$. A root $\ubeta\in f^{-1}\{f(\ualpha)\}$ has $\langle \utheta, \ubeta\rangle < 0$ if and only if $\ubeta$ is in $f^{-1}(B)$ if and only if $\ualpha$ is in $f^{-1}(B)$. By expanding $i\circ f(\ualpha)$ as a non-negative combination of the $\ubeta$'s, we find that $\langle f^{\ast}(\theta),\ualpha\rangle < 0$ if and only if $\ualpha \in f^{-1}(B)$. It follows that $\langle \theta, f(\ualpha) \rangle < 0$ if and only if $f(\ualpha) \in B$. Hence, $\theta$ separates $B$.
\end{proof}

\section{Conclusion of the proof}
\label{sec:FoldedTypes}
We now prove Lemma~\ref{lem:KeyCleanLemma} in types $B_3$, $C_3$, $\tC_2$ and $\tG_2$. 

\begin{prop} \label{prop:foldedRank3}
	Let $\Phi$ be a root system of type $B_3$, $C_3$, $\tC_2$ or $\tG_2$. Let $\uPhi$ be the root system of type $D_4$, $A_5$, $\tA_3$ or $\tD_4$, respectively, and let $f: \uPhi \to \Phi$ be the folding as shown in \Cref{fig:folding}.
	Let $I$ be an order ideal in $\pos$ and let $B$ be biclosed in $I$.
	Then $B$ is separable in $I$.
\end{prop}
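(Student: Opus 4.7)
The plan is to pull back the data $(I, B)$ through the folding $f : \uPhi \to \Phi$, invoke cleanness of order ideals in the simply-laced cover $\uPhi$, and then descend the resulting separating functional via \Cref{lem:PushforwardSeparable}. Concretely, set $\widetilde I := f^{-1}(I)$ and $\widetilde B := f^{-1}(B)$. By \Cref{lem:PullbackOrderIdeal}, $\widetilde I$ is an order ideal in the root poset of $\uPhi_+$; by \Cref{lem:PullbackClosed}, $\widetilde B$ is biclosed in $\widetilde I$.

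Each $\uPhi$ appearing in \Cref{fig:folding} is simply-laced and is either finite ($D_4$, $A_5$) or an untwisted affine root system ($\tA_3$, $\tD_4$), so every rank-$3$ full subsystem of $\uPhi$ is of type $A_1^3$, $A_1 \times A_2$, $A_3$, or $\tA_2$. The direct proofs of \Cref{lem:KeyCleanLemma} for $A_3$ and $\tA_2$ in \Cref{sec:FiniteSuitable,sec:A2TildeSuitable}, together with \Cref{lem:2DimEasy,lem:DisconnectedEasy} handling the reducible rank-$3$ cases, let us apply \Cref{thm:SimplyLacedOrdering} to conclude that any linear extension of the root poset on $\uPhi_+$ is suitable.

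The crux is then to establish that $\widetilde B$ is separable in $\widetilde I$. I would pick the suitable order on $\uPhi_+$ so that $\widetilde I$ occurs as an initial segment $X_m$. When $\uPhi$ is finite, \Cref{thm:BiclosedExtend} extends $\widetilde B$ to a biclosed $\overline{\widetilde B}$ in all of $\uPhi_+$; since $\uPhi_+$ is the positive system of a finite Weyl group, $\overline{\widetilde B}$ is the inversion set of a Weyl group element and is therefore separable, and its separating functional restricts to one separating $\widetilde B$ in $\widetilde I$. When $\uPhi$ is affine (so $\tA_3$ or $\tD_4$), I would instead reduce to the case $I$ finite (hence $\widetilde I$ finite) and argue by induction on $|\widetilde I|$, following the template of \Cref{sec:FiniteSuitable,sec:A2TildeSuitable}: at each inductive step, suitability provides a full rank-$3$ subsystem containing the newly-adjoined vector through which the local condition of \Cref{lem:KeyCleanLemmaRephrase} can be verified using the already-established cleanness of $A_3$ and $\tA_2$. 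Once $\widetilde B$ is separable in $\widetilde I$, \Cref{lem:PushforwardSeparable} immediately gives separability of $B$ in $I$.

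The principal obstacle is this inductive step in the affine simply-laced cases, since biclosed subsets of the infinite set $\uPhi_+$ need not be separable globally and the ``extend-then-separate'' shortcut used for finite Weyl groups is not directly available. The workaround is to stay inside the finite order ideal $\widetilde I$ and to use suitability only locally, as a device for reducing to rank-$3$ cleanness at each step of the induction. Everything else is the bookkeeping of Lemmas~\ref{lem:PullbackOrderIdeal}--\ref{lem:PushforwardSeparable}, which translate the question faithfully between the two levels of the folding.
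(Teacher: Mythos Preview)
Your treatment of the finite covers $D_4$ and $A_5$ is fine and matches the paper: pull back, extend to all of $\uPhi_+$ via \Cref{thm:BiclosedExtend}, use that biclosed sets in a finite positive system are separable, then push forward with \Cref{lem:PushforwardSeparable}.

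The affine case, however, has a real gap. You correctly identify that the ``extend-then-separate'' shortcut fails because biclosed sets in $\tA_3$ or $\tD_4$ need not be weakly separable. But your proposed workaround --- an induction on $|\widetilde I|$ using suitability ``locally'' --- does not actually establish that the finite order ideal $\widetilde I$ is clean. Suitability tells you that any triple of roots lies in a full rank-$3$ subsystem whose intersection with the initial segment is clean; this is exactly what is needed to run the closure/extension machinery of \Cref{thm:KeyExtensionLemma}, but it does \emph{not} give you separability of a biclosed set in the whole of $\widetilde I$. Concretely, at the inductive step you would need, for a given region $\Omega$ on one side of $\gamma^\perp$, to exhibit $\zeta_1,\zeta_2\in B'$ with $\gamma\in\Span_+(\zeta_1,\zeta_2)$. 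A clean rank-$3$ slice containing $\gamma$ only tells you that $B'\cap F$ is separable \emph{inside $F$}; it says nothing about whether the $\zeta_i$ you need actually live in that particular $F$. The paper never claims (and does not prove) that finite order ideals in rank $\geq 4$ affine root systems are clean; Theorem~\hyperlink{thm:introclean}{B} is stated only for finite type and rank-$3$ affine.

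What the paper does instead is the step you are missing: after extending $f^{-1}(B)$ to a biclosed $\overline{B}$ in all of $\uPhi_+$, it observes that the closure operator commutes with the folding symmetry $\sigma$, so $\overline{B}$ is $\sigma$-invariant. It then appeals to the explicit classification of biclosed sets in affine type from \cite{Barkley2022} to check, by hand, that every $\sigma$-invariant biclosed set in $\tA_3$ or $\tD_4$ is weakly separable (the obstruction to weak separability is a configuration in the ``limit'' set $\overline{B}_\infty$ that cannot occur $\sigma$-invariantly). Only then does \Cref{lem:PushforwardSeparable} apply. You never use $\sigma$-invariance, and you never invoke the classification; without one of these ingredients your argument in the affine case does not close.
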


\begin{proof}
	From Lemmas~\ref{lem:PullbackOrderIdeal} and~\ref{lem:PullbackClosed}, $f^{-1}(I)$ is an order ideal in $\uPhi^+$ and $f^{-1}(B)$ is biclosed in $f^{-1}(I)$.
	Since $\uPhi$ is simply laced, we have proved Theorem~\hyperlink{thm:introextend}{C} for $\uPhi$. Thus, letting $\overline{B}$ be the closure of $f^{-1}(B)$ in $\uPhi$, we know that $\overline{B}$ is biclosed and $\overline{B}\cap f^{-1}(I) = f^{-1}(B)$.
	Moreover, since the closure operator defining $\overline{B}$ respects the symmetry $\sigma$ of $\uPhi$, we have $\sigma(\overline{B}) = \overline{B}$.
	
	If $\uPhi$ is $D_4$ or $A_5$, then $\overline{B}$ is weakly separable, since those positive root systems are clean. Otherwise, $\uPhi$ is affine. In~\cite{Barkley2022}, the authors characterized all biclosed sets in affine root systems and, in \cite[Section 5]{Barkley2022}, we determined which of those biclosed sets are weakly separable. In particular, to a biclosed set $\overline{B}$ of $\tA_3$ (respectively, $\tD_4$) there is an associated biclosed set $\overline{B}_\infty$ containing positive and negative roots of $A_3$ (respectively, $D_4$). The set $\overline{B}$ is not weakly separable if and only if there are roots $\alpha,\beta$ in $A_3$ (resp., $D_4$) such that $\{\pm \alpha\}\subseteq \overline{B}_\infty$ and $\{\pm \beta\}\cap \overline{B}_\infty = \varnothing$. One can check that this does not happen for $\sigma$-invariant biclosed subsets of the positive and negative roots of $A_3$ or $D_4$. Hence, any $\sigma$-invariant biclosed set in $\tA_3$ or $\tD_4$ is weakly separable.
	
	As a result, for all four of our foldings, any $\sigma$-invariant biclosed set is weakly separable.
	So we deduce that $\overline{B}$ is weakly separable in $\uPhi$, and therefore $f^{-1}(B)$ is separable in $f^{-1}(I)$ (which is finite).
	Then, by Lemma~\ref{lem:PushforwardSeparable}, $B$ is separable in $I$. 
\end{proof}

We have now proven Theorem~\hyperlink{thm:introclean}{B} for all finite root systems of rank $3$ and for untwisted affine root systems of rank $3$.
We therefore deduce Theorem~\ref{thm:NonSimplyLacedOrdering}: In any finite root system, or any untwisted affine root system, any total order refining the root poset is suitable.
Theorem~\ref{thm:BiclosedLattice} then implies Theorem~\hyperlink{thm:introlattice}{A}, 
Theorem~\ref{thm:BiclosedExtend} implies Theorem~\hyperlink{thm:introextend}{C}, 
and Theorem~\ref{thm:ConjectureA} implies Theorem~\hyperlink{thm:introConjD}{D} (Dyer's Conjecture~A).
What remains is to verify Theorem~\hyperlink{thm:introclean}{B} for finite root systems of rank $> 3$. We do that now:

\begin{theorem} \label{thm:FiniteClean}
	Let $\Phi$ be a finite root system. Let $J$ be any order ideal in the root order on $\pos$. Then $J$ is clean.
\end{theorem}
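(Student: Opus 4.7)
The plan is to combine the suitable-ordering machinery (already established in all finite and untwisted affine types by Theorem~\ref{thm:NonSimplyLacedOrdering}) with the classical fact that, for a finite root system, biclosed subsets of $\pos$ are exactly inversion sets of Weyl group elements.

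First, I would construct a total order $\gamma_1,\gamma_2,\ldots,\gamma_N$ on $\pos$ that refines the root poset and places the elements of $J$ first: list the roots in $J$ in any order compatible with the root poset, then list the roots of $\pos\setminus J$ in any order compatible with the root poset. Because $J$ is an order ideal, every root dominating an element outside $J$ is itself outside $J$, so such a linear extension exists and has $J = X_{|J|}$ as a finite initial segment. By Theorem~\ref{thm:NonSimplyLacedOrdering}, this ordering is suitable.

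Now suppose $B$ is biclosed in $J$. By Theorem~\ref{thm:BiclosedExtend}, the closure $\overline{B}$ of $B$ in $\pos$ is biclosed in $\pos$ and satisfies $\overline{B}\cap J = B$. Since $\Phi$ is finite, every biclosed subset of $\pos$ is the inversion set of some $w\in W$ (the classical fact of Papi and Dyer cited in the introduction), so $\overline{B}$ is separable: there is $\theta\in V^{*}$ with $\langle\theta,\beta\rangle<0$ for $\beta\in\overline{B}$ and $\langle\theta,\beta\rangle>0$ for $\beta\in\pos\setminus\overline{B}$. Restricting to $J$: for $\beta\in B$ we have $\beta\in\overline{B}$, so $\langle\theta,\beta\rangle<0$; and for $\beta\in J\setminus B$, the equality $\overline{B}\cap J = B$ gives $\beta\notin\overline{B}$, so $\langle\theta,\beta\rangle>0$. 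Hence $B$ is separable, and in particular weakly separable, in $J$.

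There is no real obstacle at this stage, because all the heavy lifting has been done: the rank-3 verifications in Sections~\ref{sec:FiniteSuitable} and~\ref{sec:A2TildeSuitable} and the folding arguments of Section~\ref{sec:Folding} together yield Theorem~\ref{thm:NonSimplyLacedOrdering}, which is exactly what makes the extension step (Theorem~\ref{thm:BiclosedExtend}) applicable. The only mild subtlety is the choice of linear extension putting $J$ first, which relies on $J$ being an order ideal; this is where the hypothesis is used.
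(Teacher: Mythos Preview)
Your proof is correct and follows essentially the same approach as the paper's own argument: extend the biclosed set $B\subseteq J$ to a biclosed set in all of $\pos$ via Theorem~\ref{thm:BiclosedExtend}, invoke the classical fact that biclosed subsets of a finite positive system are separable, and restrict back. The paper's proof is terser and leaves implicit the step you spell out---namely, choosing a linear extension of the root poset that places $J$ as an initial segment so that Theorem~\ref{thm:BiclosedExtend} applies---and it cites \cite{McConville2014} rather than Papi/Dyer for separability, but the logic is the same.
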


\begin{proof}
	Let $C$ be a biclosed set in $J$. We wish to show that $C$ is separable in $J$. By \Cref{thm:BiclosedExtend}, there is a biclosed set $B$ in $\pos$ such that $B\cap J = C$. Now we use that biclosed sets in $\pos$ are separable (e.g. by \cite{McConville2014}) to conclude that $B$ is separable in $\pos$. But this implies that $C$ is separable in $J$, so we are done.
\end{proof}

We remark on the connections between Theorem~\ref{thm:BiclosedLattice}, Theorem~\ref{thm:FiniteClean}, and results of Nathan Reading on the poset of regions of a Coxeter arrangement. 
Let $\Phi$ be a finite root system and let $J$ be an order ideal in the root order on $\pos$.
Theorem~\ref{thm:BiclosedLattice} tells us that biclosed sets in $J$ form a lattice.
It follows quickly from Reading's results \cite[Section 9-8]{Reading2016Ch9} that separable sets in $J$ form a lattice (in fact, a lattice quotient of the lattice of separable sets in $\pos$).
Theorem~\ref{thm:FiniteClean} ties these results together, showing that biclosed sets and separable sets are the same thing in $J$.

We have now completed our primary results. We conclude with comments on twisted affine root systems, and on the non-crystallographic types.

\section{Difficulties in twisted affine root systems} \label{sec:TwistedAffine}

Let $X \subset V$. If we replace any vector in $X$ by a positive multiple of itself, this will not change whether or not any given subset of $X$ is biclosed and/or separable.
Likewise, it will not change whether or not $X$ is clean.

However, if $\Phi$ is a crystallographic root system, there can be another crystallographic root system $\Phi^t$ obtained by replacing some vectors in $\Phi$ by positive multiples of themself. 
We call $\Phi^t$ a \newword{twist} of $\Phi$. The root posets on $\Phi$ and $\Phi^t$ will generally be different, and thus they will have different order ideals.
For example, $B_3$ and $C_3$ are twists of each other, which is why we needed to verify separately that order ideals are clean in both $B_3$ and $C_3$.

In this section, we will discuss a twisted version of $\tC_2$ for which some order ideals are not clean, and show how this twist does not have the nice properties that we have proved for the untwisted $\tC_2$. 
This limits the extent to which the arguments in Section~\ref{sec:FoldedTypes} can hope to be generalized to other foldings.

The root system we consider is denoted $D_3^{(2)}$ and has the extended Dynkin diagram \dynkin[label,verticalshift]{D}[2]{3}.
The (perhaps misleading) name comes from Kac's classification of affine root systems \cite{Kac1983}. We can construct it from $\tC_2$ as follows: the roots in $\tC_2$ all have length either $\sqrt{2}$ or $2$. This partitions $\tC_2$ into two root subsystems $\Phi_1$ and $\Phi_2$, respectively. (This is not a partition into irreducible components, since $\Phi_1$ and $\Phi_2$ are not orthogonal.) To obtain $D_3^{(2)}$, we rescale the elements of $\Phi_1$ to have length $2$, and rescale the elements of $\Phi_2$ to have length $\sqrt{2}$. This gives a bijection $\mathrm{tw}:\tC_2 \to D_3^{(2)}$. Write $\alpha_0,\alpha_1,\alpha_2$ for the simple roots of $\tC_2$ and $\beta_0\coloneqq \frac{1}{\sqrt{2}}\alpha_0, \beta_1\coloneqq \sqrt{2}\alpha_1, \beta_2\coloneqq \frac{1}{\sqrt{2}}\alpha_2$ for the simple roots of $D_3^{(2)}$. The following list of nine roots is an order ideal $I$ in the root poset of $D_3^{(2)}$. We give both the subset of $D_3^{(2)}$ and its preimage under $\mathrm{tw}$.

\begin{figure}[H]
	\begin{tabular}{c@{\hskip12\tabcolsep}c}
		$I\subseteq D_3^{(2)}$  & $\mathrm{tw}^{-1}(I)\subseteq \tC_2$  \\
		\rowcolor{gray!25}
		$\beta_0$ & $\alpha_0$ \\
		\rowcolor{gray!25}
		$\beta_1$ & $\alpha_1$ \\
		$\beta_2$ & $\alpha_2$ \\
		\rowcolor{gray!25}
		$\beta_0+\beta_1$ & $\alpha_0 + 2\alpha_1$ \\
		$\beta_1+\beta_2$ & $2\alpha_1 + \alpha_2$ \\
		\rowcolor{gray!25}
		$2\beta_0+\beta_1$ & $\alpha_0+\alpha_1$ \\
		$\beta_1+2\beta_2$ & $\alpha_1+\alpha_2$\\
		$2\beta_0+\beta_1+\beta_2$ & $2\alpha_0 + 2\alpha_1 + \alpha_2$ \\
		\rowcolor{gray!25}
		$\beta_0+\beta_1+2\beta_2$ & $\alpha_0 + 2\alpha_1 + 2\alpha_2$
	\end{tabular}
\end{figure}

We remark that $\mathrm{tw}^{-1}(I)$ is \emph{not} an order ideal of $\tC_2$: the root $\alpha_0+\alpha_1+\alpha_2$ is less than $2\alpha_0+ 2\alpha_1 + \alpha_2$ but is not in $\mathrm{tw}^{-1}(I)$. So twisting does not preserve order ideals. One might hope that Theorem~\hyperlink{thm:introclean}{B} is still true for $D_3^{(2)}$. This would mean that $I$ is a clean set. But this is not the case; the following set is biclosed in $I$ and yet not separable in $I$:
\[ B = \{ \beta_0, \beta_1, \beta_0+\beta_1, 2\beta_0+\beta_1, \beta_0+\beta_1+2\beta_2 \}. \]
We have indicated the elements of $B$ with shading in the table above. One can see that $B$ is not separable by presuming that $\theta\in \Vd$ is a separating vector and examining \[\langle\theta,2\beta_0+\beta_1+2\beta_2\rangle.\] 
This pairing must be negative, since $\beta_0$ and $\beta_0+\beta_1+2\beta_2$ are both in $B$ and hence their sum pairs negatively. The pairing must also be positive, since $2\beta_0+\beta_1+\beta_2$ and $\beta_2$ are both in $I\setminus B$ and hence their sum pairs positively. This is a contradiction, so $B$ is not separable in $I$.

This also provides a counterexample to Theorem~\hyperlink{thm:introextend}{C} in $D_3^{(2)}$. The biclosed set $B$ in $I$ cannot be extended to a biclosed set in $I \cup \{ 2\beta_0+\beta_1+2\beta_2 \}$, and thus cannot be extended to a biclosed set in $\pos$.

If one didn't know about this counterexample, one might hope to use the folding
\begin{center}
	\begin{tabular}{c c}
		$\tD_4$ & \dynkin[fold,label]{D}[1]{4} \\[.4cm]
		& \begin{tikzpicture}
			\draw[->] (0,.5) edge (0,0) (.35cm, .5) edge (.35cm, 0) (.7cm, .5) -- (.7cm, 0);
		\end{tikzpicture}\vspace{-.2cm} \\
		$D_3^{(2)}$ & \dynkin{D}[2]{3} \\
	\end{tabular}
\end{center} 
and the argument in Section~\ref{sec:FoldedTypes} to prove that $I$ is clean. However, it turns out that this folding fails Condition 2 in Section~\ref{sec:Folding}. Indeed, the root $\ubeta_0+\ubeta_2+\ubeta_3$ of $\tD_4$ is sent to $\delta=\beta_0+\beta_1+\beta_2$ by the fold map. This vector $\delta$ is the primitive imaginary root of $D_3^{(2)}$, and hence \emph{not} in the root system under our convention.

The example in this section of an order ideal which is not clean is due to Matthew Dyer and was communicated to us by Thomas McConville.

\section{Remarks on $H_3$ and $H_4$} \label{sec:TypeH}
Recall that there are non-crystallographic finite Coxeter groups $(W,S)$, which do not admit root systems in the sense we use in this paper. However, such groups do admit faithful representations wherein reflections act via Euclidean reflection over a hyperplane. Generally, a ``root system'' in this setting is defined to be a set of normal vectors for these hyperplanes which is preserved by the action of $W$. These non-crystallographic root systems $\Phi$ still decompose into positive roots $\pos$ and negative roots $\Phi_-$. The biclosed sets in $\pos$ allow us to define an extended weak order for these groups, which coincides with the usual weak order since Coxeter arrangements are clean arrangements. 

The non-crystallographic finite Coxeter groups are all either of rank 2, or else are the rank 3 group $H_3$ or the rank 4 group $H_4$. One could ask: to what extent do the theorems presented here apply to the non-crystallographic root systems associated to these groups? If there is a suitable order on these root systems, then all the theorems from Sections~\ref{sec:extend} and \ref{sec:biclosedlattice} remain true. Any ordering on a rank 2 system putting the fundamental roots first will be suitable, so those systems are fine. But non-crystallographic root systems do not have a well-behaved notion of root poset%
\footnote{There is a Coxeter-theoretic notion of root poset \cite[Section 4.6]{Bjorner2005} which applies to these systems, but this is a different order than the one discussed here, and its order ideals can fail to be clean.}, 
which is a major obstacle to constructing a suitable order on $H_3$ or $H_4$ using the methods of this paper.

The methods in this paper can be adapted to prove a rather weak result in the cases of $H_3$ and $H_4$, which we now explain.
Let $\Phi$ be a root system of type $H_n$ for $n=3$ or $4$, with simple roots $\alpha_1$, $\alpha_2$, \dots, $\alpha_n$. 
We define a preorder $\preceq$ on $\pos$ as follows: Let $\sum p_i \alpha_i$ and $\sum q_i \alpha_i$ be positive roots. Then $\sum p_i \alpha_i \preceq \sum q_i \alpha_i$ if, for each index where $q_i=0$, we also have $p_i=0$. 
This is a preorder, meaning that it is reflexive and transitive, but not anti-symmetric. In any preorder, the relation defined by $\beta \sim \gamma$ if $\beta \preceq \gamma \preceq \beta$ is an equivalence relation, and the preorder induces a partial order on the equivalence classes of this equivalence relation.
Figure~\ref{fig:H4root} shows these partial orders for $H_3$ and $H_4$; the nodes of the Hasse diagram are labeled with the sizes of the equivalence classes. 

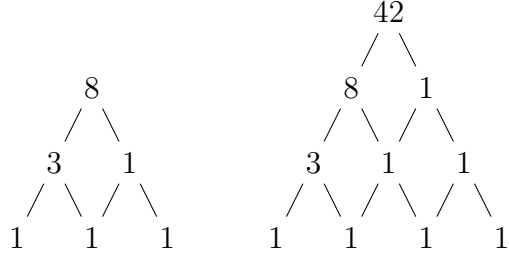
\begin{figure}[h]
	\centerline{
		\begin{tikzpicture}
			\node at (2, 2)   (b1) {$8$};
			\node at (1.5, 1)   (c1) {$3$};
			\node at (2.5, 1)   (c2) {$1$};
			\node at (1, 0)   (d1) {$1$};        
			\node at (2, 0)   (d2) {$1$};        
			\node at (3, 0)   (d3) {$1$};        
			\draw (b1) -- (c1); \draw (b1) -- (c2); 
			\draw (c1) -- (d1); \draw (c1) -- (d2); \draw (c2) -- (d2); \draw (c2) -- (d3); 
		\end{tikzpicture}
		\qquad
		\begin{tikzpicture}
			\node at (2.5, 3)   (a) {$42$};
			\node at (2, 2)   (b1) {$8$};
			\node at (3, 2)   (b2) {$1$};
			\node at (1.5, 1)   (c1) {$3$};
			\node at (2.5, 1)   (c2) {$1$};
			\node at (3.5, 1)   (c3) {$1$};
			\node at (1, 0)   (d1) {$1$};        
			\node at (2, 0)   (d2) {$1$};        
			\node at (3, 0)   (d3) {$1$};        
			\node at (4, 0)   (d4) {$1$};        
			\draw (a) -- (b1); \draw (a) -- (b2);
			\draw (b1) -- (c1); \draw (b1) -- (c2); \draw (b2) -- (c2); \draw (b2) -- (c3);
			\draw (c1) -- (d1); \draw (c1) -- (d2); \draw (c2) -- (d2); \draw (c2) -- (d3);  \draw (c3) -- (d3); \draw (c3) -- (d4);               
	\end{tikzpicture}}
	\caption{The preorders coming from the ``foldings'' $D_6 \to H_3$ and $E_8 \to H_4$.} \label{fig:H4root}
\end{figure}

We define an \newword{order ideal} in $\pos$ to be a subset $I$ of $\pos$ such that, if $\gamma \in I$, and $\beta \preceq \gamma$, then $\beta \in I$.
Note that these are unions of $\sim$-equivalence classes, forming order ideals in the quotient poset.

\begin{theorem}
	Let $I$ be a $\preceq$-order ideal. Then $I$ is clean.
\end{theorem}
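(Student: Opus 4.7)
My plan is to adapt the folding arguments of \Cref{sec:Folding} to the generalized ``foldings'' $D_6 \to H_3$ and $E_8 \to H_4$ mentioned in the caption of \Cref{fig:H4root}. These do not come from Dynkin diagram symmetries, since $H_n$ is not crystallographic; instead they arise from the classical $\mathbb{Z}[\tau]$-realizations of the $D_6$ and $E_8$ root lattices, which furnish natural $2$-to-$1$ projections of positive roots onto positive roots. The goal is to construct linear maps $p\colon \uV \to V$ and $i\colon V \to \uV$ (with $\uPhi$ of type $D_6$ or $E_8$) together with a $2$-to-$1$ surjection $\pi\colon \uPi \to \Pi$ of simple roots, such that each $p(\ualpha)$ is a positive scalar multiple of $\alpha_{\pi(\ualpha)}$. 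Dividing by these scalars defines a surjection $f\colon \uPhi_+ \to \pos$. Since biclosedness and separability of a subset of $V$ are invariant under positive rescaling of the ambient vectors, the triple $(i,p,f)$ will satisfy the three Conditions of \Cref{sec:Folding} up to this rescaling, and the statements of Lemmas~\ref{lem:PullbackOrderIdeal}--\ref{lem:PushforwardSeparable} will transfer.

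The key structural observation would be that $f$ preserves supports. Writing $\ubeta = \sum c_{\ualpha} \ualpha$ with $c_{\ualpha} \geq 0$ and applying $p$, each $\pi$-fiber meeting $\mathrm{supp}(\ubeta)$ contributes a strictly positive coefficient to the corresponding $\alpha_j$, so $\mathrm{supp}_{\Pi}(f(\ubeta)) = \pi(\mathrm{supp}_{\uPi}(\ubeta))$. Hence, for a $\preceq$-order ideal $I \subseteq \pos$ with downward-closed support family $\mathcal{S}$, the preimage $\widehat{I} \coloneqq f^{-1}(I)$ is a $\preceq$-order ideal in $\uPhi_+$, with support family $\pi^{-1}(\mathcal{S})$. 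Since $\uPhi$ is crystallographic, any $\preceq$-order ideal of $\uPhi_+$ is automatically an order ideal in the crystallographic root poset, because $\ubeta \leq \ugamma$ in root order implies $\mathrm{supp}(\ubeta) \subseteq \mathrm{supp}(\ugamma)$. Therefore Theorem~\hyperlink{thm:introclean}{B} gives that $\widehat{I}$ is clean.

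To conclude, let $B$ be biclosed in $I$. The preimage $\widehat{B} \coloneqq f^{-1}(B)$ is biclosed in $\widehat{I}$ by the analog of \Cref{lem:PullbackClosed}, and so cleanness of $\widehat{I}$ yields a functional $\utheta \in \uV^{\ast}$ weakly separating $\widehat{B}$ in $\widehat{I}$. The analog of \Cref{lem:PushforwardSeparable}, with $\theta \coloneqq i^{\ast}(\utheta) \in \Vd$, then produces a functional weakly separating $B$ in $I$, proving that $I$ is clean. The main obstacle will be the explicit construction of $i$ and $p$ for the $\mathbb{Z}[\tau]$-compatible embeddings, and the verification of the rescaled form of Condition~3---that $i(\alpha)$ lie in the positive span of $f^{-1}(\alpha)$---for each $\alpha \in \pos$. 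A secondary technical issue is that the raw projection $p(\uPhi)$ typically contains both $\Phi$ and $\tau \cdot \Phi$, so the rescaling defining $f$ must be chosen coherently on each fiber of $\pi$ in order for $f$ to land in $\pos$.
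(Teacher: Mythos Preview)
Your proposal is correct and follows essentially the same route as the paper: the generalized folding $D_6\to H_3$ (respectively $E_8\to H_4$) via the $\ZZ[\tau]$-realization, with maps $i$ and $p$ satisfying relaxed versions of Conditions~1--3 allowing positive scalar multiples, then pulling back $I$ and $B$ along $f$ and pushing separability forward via the analogue of \Cref{lem:PushforwardSeparable}. The paper cites Dyer's explicit bijection $f:\uPhi\to\Phi\sqcup\tau\Phi$ and gives the concrete formula $i(\alpha)=\ualpha+\tau\ualpha'$ (where $f(\ualpha)=\alpha$, $f(\ualpha')=\tau\alpha$), which resolves the ``main obstacle'' you flag; your anticipation that $p(\uPhi)=\Phi\sqcup\tau\Phi$ is exactly right.

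One small difference worth noting: your argument that $\widehat{I}=f^{-1}(I)$ is a root-poset order ideal goes through the support characterization (using $\mathrm{supp}_\Pi(f(\ubeta))=\pi(\mathrm{supp}_{\uPi}(\ubeta))$ and the fact that root order refines the support preorder), and you then invoke Theorem~\hyperlink{thm:introclean}{B} directly to see that $\widehat{I}$ is clean. The paper instead verifies the analogue of \Cref{lem:PullbackOrderIdeal} by a direct linear-algebra computation and then points to the argument of \Cref{sec:FoldedTypes}. Your route is slightly cleaner, since once $\widehat{I}$ is known to be a root-poset order ideal in a finite crystallographic system, cleanness is immediate from Theorem~\hyperlink{thm:introclean}{B} and there is no need to extend to all of $\uPhi^+$.
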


\begin{proof}
	Let $(\uPhi, \uV)$ be a root system of type $D_6$ or $E_8$, according to whether $\Phi$ has type $H_3$ or $H_4$. We shall briefly describe a construction analogous to folding, which sends $D_6\to H_3$ and $E_8\to H_4$ according to the following diagrams.
	
	\vspace{.3em}\begin{figure}[H]
		\begin{tabular}{|@{\hskip2\tabcolsep}c c@{\hskip2\tabcolsep}|@{\hskip2\tabcolsep}c c@{\hskip2\tabcolsep}|}
			\hline
			$D_6$ & \begin{dynkinDiagram}[name=upper]A3
				\node (current) at ($(upper root 1)+(0,-.3cm)$) {};
				\dynkin[at=(current),name=lower]A3
				\draw (upper root 3.center) -- (lower root 2.center);
			\end{dynkinDiagram} & $E_8$ & 
			\begin{dynkinDiagram}[name=upper]A4
				\node (current) at ($(upper root 1)+(0,-.3cm)$) {};
				\dynkin[at=(current),name=lower]A4
				\draw (upper root 4.center) -- (lower root 3.center);
			\end{dynkinDiagram} \\[.1cm]
			&\begin{tikzpicture}
				\draw[->] (0,.5) edge (0,0) (.35cm, .5) edge (.35cm, 0) (.7cm, .5) -- (.7cm, 0);
			\end{tikzpicture} & &
			\begin{tikzpicture}
				\draw[->] (0,.5) edge (0,0) (.35cm, .5) edge (.35cm, 0) (.7cm, .5) edge (.7cm, 0) (1.05cm, .5) -- (1.05cm, 0);
			\end{tikzpicture} \\
			$H_3$ & \dynkin[backwards]{H}{3} & $H_4$ & \dynkin[backwards]{H}{4}   \\
			\hline
		\end{tabular}
	\end{figure}
	
	\vspace{.2em}
	In~\cite[Proposition 1.4]{Dyer2009a} it is shown that there is a bijection $f:\uPhi \to \Phi\sqcup \tau\Phi$, where $\tau$ is the golden ratio $\frac{1+\sqrt{5}}{2}$. This bijection is the restriction of a linear map $p:\uV\to V$, and sends $\uPi$ to $\Pi\sqcup\tau\Pi$. Let $\alpha$ be a root of $\Phi$. If $f(\ualpha)=\alpha$ and $f(\ualpha')=\tau\alpha$, then we define 
	\[i(\alpha)\coloneqq \ualpha+\tau\ualpha'.\]
	Then $i$ extends to a linear map $V\to\uV$. We thus have a system $V \overset{i}{\longrightarrow} \uV  \overset{p}{\longrightarrow}  V$ such that:
	
	\vspace{.5em}\noindent
	(\textbf{Condition $\mathbf{1'}$}) For every $\widehat{\alpha}_i \in \widehat{\Pi}$, there is a simple root $\alpha_j$ of $\Pi$ such that $p(\widehat{\alpha}_i) \in \RR_{>0} \alpha_j$.\\
	(\textbf{Condition $\mathbf{2'}$}) For every $\widehat{\beta} \in \uPhi$, there is a root $\beta$ of $\Phi$ such that $p(\widehat{\beta}) \in \RR_{>0} \beta$.\\
	(\textbf{Condition $\mathbf{3'}$}) For every $\beta \in \Phi$, the vector $i(\beta)$ is in $\Span_+ \{ \widehat{\beta} \in \uPhi : p(\widehat{\beta}) \in \RR_{>0} \beta \}$. 
	
	\vspace{.5em}
	With minimal changes, the proofs of \Cref{lem:PullbackOrderIdeal,lem:PullbackClosed,lem:PushforwardSeparable} work using these primed Conditions and the root preorder on $H_3$ and $H_4$ in place of the conditions in \Cref{sec:Folding}. The argument that $I$ is clean then proceeds as it does for types $B_3$ and $C_3$ in \Cref{sec:FoldedTypes}. We detail the adjusted proof of \Cref{lem:PullbackOrderIdeal}; the rest are similar.
	
	Let $\widehat{I}$ be the set of roots $\widehat{\beta}$ in $\uPhi^+$ such that $p(\widehat{\beta})$ is a positive multiple of some $\beta \in I$. 
	We claim that $\widehat{I}$ is an order ideal of $\uPhi^+$. Indeed, let $\widehat{\gamma} \in \widehat{I}$ and $\widehat{\beta} \in \uPhi$ with $\ugamma \succeq \ubeta$, we must verify that $\ubeta \in \widehat{I}$.
	Let $p(\widehat{\beta}) = x \beta$ and $p(\widehat{\gamma}) = y \gamma$, for $\beta$, $\gamma \in \pos$ and $x$, $y \in \RR_{>0}$. 
	We know that $\widehat{\gamma}-\widehat{\beta}$ is in the positive span of the $\widehat{\alpha}_i$'s, and thus $y \gamma - x \beta$ is in $\Span_+(\alpha_i)$, and thus $\gamma - (x/y) \beta$  is in $\Span_+(\alpha_i)$.
	This shows that $\gamma \succeq \beta$. It follows that $\beta$ is in $I$ and therefore $\ubeta$ is in $\widehat{I}$.
\end{proof}

The authors have verified that there is no weaker preorder on the $H_3$ or $H_4$ root systems such that, if $I$ is an order ideal for that preorder, then $\widehat{I}$ is an order ideal in $\uPhi^+$.
We have not investigated whether there might be other orderings, not coming from folding, which are suitable.


\bibliographystyle{amsplain}
\bibliography{BiclosedLattice}

\end{document}